\theoremstyle{plain}
\newtheorem{theorem}{Theorem}[section]
\newtheorem{lemma}[theorem]{Lemma}
\newtheorem{corollary}[theorem]{Corollary}
\newtheorem{proposition}[theorem]{Proposition}
\theoremstyle{definition}
\newtheorem{remark}{Remark}
\newtheorem*{theorema}{Theorem A }{\bf}{\it}
\newtheorem*{theoremb}{Theorem B }{\bf}{\it}
\newtheorem*{theoremc}{Theorem C }{\bf}{\it}
\newtheorem*{theoremd}{Theorem D }{\bf}{\it}
\newtheorem*{theoreme}{Theorem E}{\bf}{\it}
\newtheorem{defi}{Definition}{\bf}{\it}
\newcommand{\T}{\mathcal{T}_{g,n}(S)}
\newcommand{\mlss}{\mathcal{MLSS}}
\newcommand{\Sim}{\mathcal{S}(S)}
\newcommand{\tw}{{\mathcal{T}}}
\newcommand{\half}{\frac{1}{2}}
\newcommand{\xp}{\mathcal{X}{-piece} }
\newcommand{\yp}{\textit{Y-piece} }
\newcommand{\vp}{\textit{V-piece} }
\newcommand{\jp}{\textit{Joker's hat} }
\newcommand{\yyp}{\textit{YY-piece} }
\newcommand{\yvp}{\textit{YV-piece} }
\newcommand{\yjp}{\textit{YJ-piece} }
\newcommand{\vvp}{\textit{VV-piece} }
\newcommand{\vjp}{\textit{VJ-piece} }
\newcommand{\jjp}{\textit{JJ-piece} }
\begin{document}

\title[finite marked length spectral rigidity]{On finite marked length spectral rigidity of hyperbolic cone surfaces and the Thurston metric}

\author{Huiping Pan}

\address{ School of Mathematical Science, Fudan University, 200433, Shanghai, P. R. China}
\email{panhp@fudan.edu.cn}

\thanks{This work is partially supported by NSFC, No: 11271378.}

\date{\today}

\maketitle

\begin{abstract}
  We study the geometry of hyperbolic cone surfaces, possibly with cusps or geodesic boundaries.
  We prove that any hyperbolic cone structure on a surface of non-exceptional type
  is determined up to isotopy by the geodesic lengths of a finite specific homotopy classes of non-peripheral simple closed curves.
  As an application, we show that the Thurston asymmetric metric is well-defined on the Teichm\"uller space of hyperbolic cone surfaces with fixed cone angles and boundary lengths. We compare such a Teichm\"uller space with the Teichm\"uller space of  complete hyperbolic surfaces
  with punctures, by showing that the two spaces (endowed with the Thurston metric) are almost isometric.
\vskip 10pt
  \noindent Keywords: {finite marked length spectral rigidity; Teichm\"uller space; hyperbolic cone surfaces; Thurston metric.  }\\
 \noindent AMS Mathematics Subject Classification (2010):    53C24,  51F99, 53B40.
\end{abstract}


\section{Introduction}\label{sec:intro}

Let $S=S_{g,n}$ be an oriented surface of genus $g$  with $n$ boundary components.
We shall consider hyperbolic cone metrics on $S$, that is,
hyperbolic structure on $S$ such that each boundary component is either a cone point (with cone angle strictly less than $\pi$),
 a puncture (i.e., a cone point with zero angle), or a simple closed geodesic  with positive length.
 In this case, a boundary component $\Delta$ of $S$ is associated with a   \textit{generalized length function} (also called \textit{boundary assignment}) $\lambda (\Delta)$,
 defined by

  \begin{equation}
    \lambda(\Delta)=\left \{
                            \begin{array}{ll}
                              -\theta, & \text{ if } \Delta \text{ is a cone point of angle } \theta\in(0,\pi),\\
                              0,& \text{ if } \Delta \text{ is a cusp},\\
                              l,& \text{ if } \Delta \text{ is a geodesic boundary of length } l>0.
                            \end{array}
                    \right.
  \end{equation}

\subsection{Parametrization of Teichm\"uller space}
Let  $\mathcal{T}_{g,n}$ be the Teichm\"uller space of marked hyperbolic cone metrics on $S$.
Given any $\Lambda=(\lambda_1,...,\lambda_n)\in (-\pi,\infty)^n$, we let $\mathcal{T}_{g,n}(\Lambda)$ be the subspace
of $\mathcal{T}_{g,n}$  corresponding to hyperbolic cone surfaces whose boundary components  have fixed generalized lengths
$\Lambda$.

It is well known \cite{FLP}  that  the Teichm\"uller space $\mathcal{T}_{g,n}(0)$
can be parameterized by the geodesic lengths of finitely many specified simple closed curves.
Moreover, Sepp\"al\"a and Sorvali \cite{SS2} proved that only $6g-6+2n$ simple closed curves are required if $n\neq0$.
 (For a closed surface of genus $g$,  $6g-5$ simple closed curves are needed,
 see Schmutz \cite{Sch}.)
 A natural question is  whether these results still hold for our enlarged Teichm\"uller space $\mathcal{T}_{g,n}$.
In \S\ref{sec:mlss}, we prove that:

\begin{theorema}
Let $S_{g,n}$ be a surface with $g\geq1,n\geq1$ or $g=0,n\geq 6$.
The Teichm\"uller space $\mathcal{T}_{g,n}$ can be parameterized by the geodesic lengths of
 finitely many non-peripheral simple closed  curves (see Definition \ref{defi-a}) of $S_{g,n}$.
 Moreover, the minimal number of these parameters is less than $12g-12+32n$.
\end{theorema}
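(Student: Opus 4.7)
The plan is to adapt the classical Seppälä--Sorvali and Schmutz style arguments for finite marked length spectral rigidity to the enlarged Teichmüller space $\mathcal{T}_{g,n}$, where boundary components may be cone points, cusps, or geodesic boundaries with variable generalized length.

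First I would fix a pants decomposition $\mathcal{P}=\{\gamma_1,\ldots,\gamma_{3g-3+n}\}$ of $S_{g,n}$ by non-peripheral simple closed curves. This partitions $S$ into $2g-2+n$ generalized Y-pieces whose boundary components are either interior pants curves $\gamma_i$ or boundary components $\Delta_j$ of $S$. The lengths $\ell(\gamma_i)$ already provide $3g-3+n$ of the sought parameters.

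The central obstacle is the recovery of the generalized boundary lengths $\lambda_j=\lambda(\Delta_j)$. In the classical setting these are fixed, but here each $\lambda_j$ ranges over $(-\pi,\infty)$, and the length of a typical interior simple closed curve is not monotone in $\lambda_j$ across the cone--cusp--geodesic transitions. For each boundary component $\Delta_j$ I would introduce a bounded family of non-peripheral simple closed curves supported in a neighbourhood of the pair of pants containing $\Delta_j$ (and, when needed, of an adjacent pair of pants). By case analysis on whether $\lambda_j$ falls in the cone, cusp, or geodesic regime, and using the trigonometric identities for generalized Y-pieces and their gluings, one shows that the lengths of at most $28$ carefully chosen curves per boundary, together with the pants curve lengths $\ell(\gamma_i)$, determine $\lambda_j$ uniquely. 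This $O(n)$ boundary bookkeeping is the main technical work and dictates the large constant multiplying $n$ in the final estimate.

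Once the boundary assignment $\Lambda=(\lambda_1,\ldots,\lambda_n)$ has been recovered, the problem reduces to the classical case of a fixed level set $\mathcal{T}_{g,n}(\Lambda)$. I would then recover the Fenchel--Nielsen twist parameters at each $\gamma_i$ by adding at most two transverse simple closed curves per pants curve, using the fact that the length of a dual curve is a strictly convex, eventually monotone function of the twist with all other Fenchel--Nielsen coordinates fixed. This contributes at most $2(3g-3+n)=6g-6+2n$ additional curves. Summing all contributions yields
\[
(3g-3+n)+(6g-6+2n)+28n = 9g-9+31n,
\]
which is strictly less than $12g-12+32n$ precisely when $n>3-3g$, a condition satisfied in both ranges $g\geq1,\,n\geq1$ and $g=0,\,n\geq6$. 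The principal hurdle to carry out is the uniform trigonometric analysis across the cone--cusp--geodesic transitions in the second step; once that is in place, the remaining counts follow from the classical Fenchel--Nielsen picture.
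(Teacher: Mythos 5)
Your architecture coincides with the paper's (pants decomposition, then boundary assignments, then twists), but the step you yourself flag as ``the main technical work'' --- showing that roughly $28$ simple closed curve lengths per boundary component determine the generalized length $\lambda_j\in(-\pi,\infty)$ across the cone/cusp/geodesic transition --- is asserted rather than proved, and it is precisely the new content of the theorem. The number $28$ does not fall out of a generic ``case analysis with trigonometric identities''; in the paper it comes from a specific mechanism (Lemma \ref{lem:angsim}). One embeds $\Delta_j$ in a generalized $\mathcal X$-piece whose other two generalized boundaries $\Delta_3,\Delta_3'$ are non-peripheral curves of $S$ (so their lengths are known), writes the quantities $B_2=2\cos\frac{\lambda_2}{2}\cosh\frac{\lambda_3}{2}$, $C_2=\cos^2\frac{\lambda_2}{2}+\cosh^2\frac{\lambda_3}{2}-1$ (and primed analogues) for the two unknown boundaries, and derives the identity \eqref{eq:BC}, which is \emph{linear} in the four symmetric functions $B_2+B_2'$, $C_2+C_2'+B_2B_2'$, $B_2C_2'+B_2'C_2$, $C_2C_2'$ with coefficients that are powers of $\cosh\frac{|\gamma|}{2}$. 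Replacing the waist $\gamma$ by its Dehn twists $Tw_\delta^k\gamma$, $k=0,\pm1,\pm2,\pm3$, produces seven such equations, and a Vandermonde-type determinant argument (using \eqref{eq:len-gamman} to guarantee that four of the lengths $|\gamma_k|$ are pairwise distinct) shows four of them are independent; solving gives the symmetric functions and hence $\lambda_2,\lambda_2'$, with the sign of the recovered $\cos\frac{\lambda_2}{2}$ versus $\cosh\frac{\lambda_2}{2}$ distinguishing the cone/cusp/geodesic type. Each of the seven waists needs four auxiliary curves (via the ratio identity \eqref{eq:time}) to pin down its length data, whence $7\times 4=28$. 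Without some argument of this kind your second step is exactly the statement to be proved, so as written the proposal has a genuine gap.

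Two smaller remarks. First, your twist count of two dual curves per $\gamma_i$ is defensible only \emph{after} the boundary assignments and pants lengths are known (so that the constants in $\cosh\frac{|\delta_n|}{2}=A\cosh((t+n)|\gamma|)+B$ are determined); a single dual curve never suffices because of the convexity you invoke, and the paper conservatively uses the ratio trick \eqref{eq:time} with three dual curves plus $\gamma$ precisely to avoid needing those constants. Second, your total $9g-9+31n$ versus the paper's $28n+4(3g-3+n)=12g-12+32n$ are both consistent with the stated bound, so the bookkeeping is not the issue --- the missing linear-independence argument for the boundary data is.
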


If $(g,n)\in\{(0,0),(0,1),(0,2),(0,3),(0,4),(0,5),(1,0)\}$, we call the surface $S_{g,n}$ \emph{exceptional}. Otherwise, we call surface $S_{g,n}$ \emph{non-exceptional}.

\bigskip



 \subsection{Comparisons of geometry between hyperbolic cone surfaces}
Intuitively  a cusp can be considered as a limit of cone points with angles tending to zero,  or as a limit of geodesic boundaries with lengths tending to zero.
We want to compare the geometries of hyperbolic cone surfaces under different assignments of  cone angles and boundary lengths.
In particular, we want to compare the geodesic lengths of simple closed curves on hyperbolic cone surfaces
 when we modify cone points or geodesic boundaries to cusps.

For a given pants decomposition $\Gamma=\{\gamma_1,\cdots,\gamma_{3g-3+n}\}$  and a collection of seams $\mathcal B=\{\beta_1,\cdots,\beta_k\}$, we define $F_{\Gamma,\mathcal B}$ by
    \begin{eqnarray*}
    F_{(\Gamma,\mathcal B)}:\mathcal{T}_{g,n}&\to & \mathcal{T}_{g,n}(0)\\
    (\Lambda,L,T)&\mapsto & (0,L,T),
  \end{eqnarray*}
where $(\Lambda,L,T)$ and $(0,L,T)$ are the corresponding Fenchel-Nielsen (length-twist) coordinates (see Section \ref{sec:fn}).
 We denote the restriction of $F_{\Gamma,\mathcal B}$ on $\mathcal{T}_{g,n}(\Lambda)$
 by $F_{\Gamma,\mathcal B, \Lambda}$.

\begin{theoremb}[Length comparison inequalities]
Let  $S$ be  a non-exceptional surface, $\Gamma=\{\gamma_1,\cdots,\gamma_{3g-3+n}\}$ be a pants decomposition and $\mathcal B=\{\beta_1,\cdots,\beta_k\}$ be a collection of seams. There exist constants $C,D$  depending on $\Lambda$ such that for any $X\in \mathcal{T}_{g,n}(\Lambda)$, $X'=F_{\Gamma,\mathcal B,\Lambda}(X)$ and any isotopy class of non-peripheral simple closed curve $[\alpha]$,
  \begin{equation*}
   \begin{cases}
   |{l_X([\alpha])}-{l_{X'}([\alpha])}|\leq D\sum_{j=1}^{3g-3+n} i([\alpha],[\gamma_j]), \\
  \frac{1}{C}\leq \frac{l_X([\alpha])}{l_{X'}([\alpha])}\leq C,
  \end{cases}
  \end{equation*}
where  $i(\cdot,\cdot)$ is the geometric intersection number, and ${l_X([\alpha])}$ is the length of the geodesic representative in $[\alpha]$.  Moreover, $D\to 0,C\to 1$ as $\Lambda\to 0$.
 \end{theoremb}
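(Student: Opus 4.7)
The plan is to analyze $\alpha$ arc-by-arc inside each pants of the decomposition. Let $I := \sum_j i([\alpha], [\gamma_j])$. Cutting the geodesic representative of $\alpha$ in $X$ along $\Gamma$ produces $I$ geodesic arcs $a_1, \ldots, a_I$, each in a prescribed pants $P_k$ and in a prescribed rel-boundary homotopy class. Since $X$ and $X' = F_{\Gamma, \mathcal{B}, \Lambda}(X)$ share the Fenchel--Nielsen length and twist coordinates $(L, T)$, the interior pants curves (and the gluing combinatorics determined by the seams $\mathcal{B}$) are identical in $X$ and $X'$, so the sequence of arc classes $(a_1, \ldots, a_I)$ for the geodesic representative is the same in $X'$.

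Let $\ell_k(q, r; \lambda)$ denote the length of the geodesic arc in $P_k$ representing the class $a_k$, with endpoints $q, r$ on the bounding pants curves, when the external boundaries of $P_k$ have generalized lengths $\lambda$ (and the pants-curve lengths are fixed by $L$). Since the closed geodesic minimizes its length among all closed curves in its free homotopy class, we have
\begin{equation*}
l_X([\alpha]) = \min_{(q_k)} \sum_{k=1}^I \ell_k(q_{k-1}, q_k; \Lambda_{P_k}), \qquad l_{X'}([\alpha]) = \min_{(q_k)} \sum_{k=1}^I \ell_k(q_{k-1}, q_k; 0),
\end{equation*}
the minimum being taken over endpoint positions on the bounding pants curves. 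Substituting each minimizer as a competitor in the other minimization yields
\begin{equation*}
|l_X([\alpha]) - l_{X'}([\alpha])| \leq \sum_{k=1}^I \sup_{q, r} \left| \ell_k(q, r; \Lambda_{P_k}) - \ell_k(q, r; 0) \right|,
\end{equation*}
where the supremum runs over the compact endpoint domain.

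A pair of pants admits only finitely many isotopy classes of essential simple arcs with endpoints on its boundary, so for each pants $P_k$ the arc class $a_k$ belongs to a finite list. Within each class, $\ell_k(q, r; \lambda)$ is a jointly analytic function of $(q, r, \lambda)$, computable from the standard decomposition of a pair of pants into two isometric right-angled hexagons whose side lengths satisfy explicit hyperbolic trigonometric identities; these identities extend analytically across the cone-point and cusp loci via substitutions such as $\cosh(l/2) \leftrightarrow \cos(\theta/2)$ (for cone angle $\theta = -\lambda$) together with the cusp limit $\lambda \to 0$. By compactness of the endpoint domain, finiteness of arc classes, and the positive lower bound $\inf_{q,r}\ell_k(q,r;\lambda)>0$ available uniformly near $\lambda=0$, there exist constants $D_P(\Lambda), C_P(\Lambda)$ with $D_P \to 0$ and $C_P \to 1$ as $\Lambda \to 0$ satisfying $\sup_{a,q,r}|\ell(q,r;\Lambda_P) - \ell(q,r;0)| \leq D_P$ and $C_P^{-1} \leq \ell(q,r;\Lambda_P)/\ell(q,r;0) \leq C_P$. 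Setting $D := \max_P D_P$ and $C := \max_P C_P$ over the finitely many pants in the decomposition and summing over the $I$ arcs yields the additive bound $|l_X([\alpha]) - l_{X'}([\alpha])| \leq D \sum_j i([\alpha], [\gamma_j])$; the multiplicative bound follows from the arc-wise ratio bound applied to the sums of positive arc lengths. The main obstacle in implementing this plan is establishing the analytic continuity of pants arc-length formulas across the cone and cusp loci uniformly in the endpoints on the pants curves; this requires a careful generalization of classical right-angled hexagon trigonometry to the cone/cusp regime, and is the technical heart of the argument.
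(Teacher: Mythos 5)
Your overall architecture is the same as the paper's: cut the geodesic representative along the pants curves, compare arc by arc within each pair of pants, and use each minimizer as a competitor for the other metric to convert per-arc estimates into the additive and multiplicative bounds. However, the step where you produce the uniform per-pants constants $D_P,C_P$ has a genuine gap. You derive them from ``compactness of the endpoint domain'' and ``finiteness of arc classes,'' but the family of arcs you must control is not compact in two essential ways. First, the finitely many isotopy classes of essential arcs on a pair of pants are classes \emph{rel boundary}; the arcs $a_k$ cut from a closed geodesic are taken \emph{rel endpoints}, and for fixed endpoint positions there are infinitely many such classes (the arc may wind arbitrarily many times around the boundary components, and does so when $[\alpha]$ is highly twisted about a pants curve). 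The arc lengths are therefore unbounded, and a pointwise continuity-plus-compactness argument cannot give a single $D_P$ with $\sup|\ell(\cdot;\Lambda_P)-\ell(\cdot;0)|\leq D_P$ over this infinite family. Second, the constants must be uniform over all $X\in\mathcal{T}_{g,n}(\Lambda)$, i.e.\ over all pants-curve lengths $L\in\mathbb{R}_+^{3g-3+n}$, another non-compact range; your argument would at best give constants depending on $L$, hence on $X$.

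The paper closes exactly this gap with explicit trigonometry rather than compactness. Lemma~\ref{lem:eta} writes the rel-endpoints arc length as $\cosh l=\cosh\rho_M\cosh\rho_N\cosh h_\Lambda-\sinh\rho_M\sinh\rho_N$, where $h_\Lambda$ is the orthogeodesic seam length and $\rho_M,\rho_N$ are signed displacements encoding the winding; Lemmas~\ref{lemma:comparisons}--\ref{lemma:comparison-2} show that a two-sided bound on the single quantity $(\cosh h_\Lambda-1)/(\cosh h_0-1)$ forces both $|l(\Lambda)-l(0)|\leq \textup{arccosh }C$ and $C^{-1}\leq l(\Lambda)/l(0)\leq C$ \emph{uniformly in} $\rho_M,\rho_N$; and Lemma~\ref{lem:Y-compare} verifies that bound case by case with constants depending only on the cone/boundary data $\Lambda$ and not on the pants-curve lengths (e.g.\ in Case A the ratio lies in $[(1+\cos\frac{|\lambda_3|}{2})/2,\,1]$ for all $\lambda_1,\lambda_2$). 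So the ``technical heart'' is not, as you suggest, analytic continuation of hexagon formulas across the cone/cusp loci (which is routine via $\cosh\leftrightarrow\cos$ substitutions), but the uniformity of the comparison over unbounded winding and unbounded pants moduli; to repair your proof you would need to supply estimates of the type of Lemmas~\ref{lemma:comparisons}--\ref{lem:Y-compare} in place of the compactness appeal.
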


\subsection{The Thurston metric on $\mathcal{T}_{g,n}(\Lambda)$}
There is an asymmetric metric on  $\mathcal{T}_{g,n}(0)$ defined by

\begin{equation}\label{eq:metric}
d_{\mathrm{Th}}(X_1,X_2)=\log \sup_{[\alpha]\in\Sim}\frac{l_{X_2}([\alpha])}{l_{X_1}([\alpha])},
\end{equation}
which is the so called \emph{Thurston metric} \cite{Th}.

We combine Theorem A with a generalized Mcshane's identity on a hyperbolic cone surface due to  Tam-Wong-Zhang  \cite{TWZ}
to show that
\begin{theoremc}
  Let  $S$ be a non-exceptional surface. For any $X_1,\ X_2\in \mathcal{T}_{g,n}(\Lambda)$, if $l_{X_1}([\alpha])\geq l_{X_2}([\alpha])\ $ for any $ [\alpha] \in\Sim$, then $X_1=X_2$.
\end{theoremc}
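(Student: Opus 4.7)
The plan is to use the Tam--Wong--Zhang generalized McShane identity to upgrade the one-sided hypothesis $l_{X_1}([\alpha]) \geq l_{X_2}([\alpha])$ to a two-sided equality $l_{X_1}([\alpha]) = l_{X_2}([\alpha])$ on every non-peripheral simple closed curve $\alpha$, at which point Theorem A immediately yields $X_1 = X_2$. The argument uses in an essential way the presence of at least one generalized boundary component; the remaining case of a closed surface of genus $\geq 2$ (where $\Lambda$ is empty) is covered directly by Thurston's classical rigidity theorem for the asymmetric metric on $\mathcal{T}_{g,0}$.

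For each generalized boundary $\Delta_i$ of $S$ the Tam--Wong--Zhang identity takes the schematic form
\[
\sum_{(\alpha,\beta)} F_i\bigl(\lambda_i,\, l_X(\alpha),\, l_X(\beta)\bigr) \;+\; \sum_{j\neq i,\,\gamma} G_{i,j}\bigl(\lambda_i,\lambda_j,\, l_X(\gamma)\bigr) \;=\; \Phi_i(\Lambda),
\]
where $(\alpha,\beta)$ ranges over ordered pairs of simple closed geodesics bounding an embedded pair of pants together with $\Delta_i$, and $\gamma$ ranges over simple closed geodesics bounding an embedded pair of pants together with $\Delta_i$ and $\Delta_j$; the summand functions $F_i$ and $G_{i,j}$ are positive and \emph{strictly decreasing} in each length variable appearing in them.

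Applying this identity to both $X_1$ and $X_2$, the right-hand sides $\Phi_i(\Lambda)$ agree since $X_1,X_2\in\mathcal{T}_{g,n}(\Lambda)$. By the hypothesis $l_{X_1}([\alpha]) \geq l_{X_2}([\alpha])$ and strict monotonicity of $F_i$ and $G_{i,j}$, each term of the sum for $X_1$ is bounded above by the corresponding term for $X_2$; equality of the two sums therefore forces term-wise equality, which in turn forces $l_{X_1}(\alpha)=l_{X_2}(\alpha)$ for every non-peripheral simple closed curve $\alpha$ appearing in some such pair or triple at some $\Delta_i$. A short topological argument then shows that every non-peripheral simple closed curve $\alpha$ on $S$ arises in this way: extend $\{\alpha\}$ to a pants decomposition of $S$ in which some $\Delta_i$ is a boundary of a pair of pants having $\alpha$ as one of its cuffs, which is always possible after cutting $S$ along $\alpha$ and choosing the decomposition suitably on the side containing $\Delta_i$. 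Hence $l_{X_1}=l_{X_2}$ on all non-peripheral simple closed curves, and Theorem A then yields $X_1=X_2$ in $\mathcal{T}_{g,n}(\Lambda)$.

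The main delicate points are, first, the topological step: verifying that every non-peripheral simple closed curve can be realized as a cuff of a pair of pants also containing some $\Delta_i$, which requires a brief case analysis according to whether $\alpha$ is separating and how the $\Delta_i$ are distributed among the complementary pieces after cutting along $\alpha$; and second, the verification of strict monotonicity of $F_i$ and $G_{i,j}$ in each length variable, which must be read off from the explicit formulae recorded in Tam--Wong--Zhang in order to ensure that term-wise equality of the series genuinely propagates back to equality of individual geodesic lengths.
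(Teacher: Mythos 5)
Your proposal is correct and follows essentially the same route as the paper: apply the Tam--Wong--Zhang identity at a distinguished generalized boundary component, use that the right-hand side depends only on $\Lambda$ together with strict monotonicity of the gap terms to force $l_{X_1}([\alpha])=l_{X_2}([\alpha])$ for every $[\alpha]\in\Sim$ (the paper records your ``topological step'' as the remark that every non-peripheral simple closed geodesic bounds an embedded pair of pants with $\Delta_0$), and then conclude by the marked length spectral rigidity theorem. The only cosmetic differences are that the paper uses a single distinguished $\Delta_0$ rather than summing over all boundary components, and it does not separately discuss the closed case $n=0$.
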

As a result, we can define the Thurston metric on $\mathcal{T}_{g,n}(\Lambda)$ for any $\Lambda\in(-\pi,\infty)^n$, using the formula \eqref{eq:metric}.
It would be interesting to compare the Thurston metric  with the arc metric on $\mathcal{T}_{g,n}(\Lambda)$, defined by Liu-Papadopoulos-Su-Th\'eret
\cite{LPST1}.

Using Theorem B,
we are able to prove that any two Teichm\"uller spaces $\mathcal{T}_{g,n}(\Lambda),\mathcal{T}_{g,n}(\Lambda')$  are almost isometric.
\begin{theoremd}[Almost-isometry]
  Let $\Lambda\in(-\pi,\infty)^n$ and let  $S$ be a non-exceptional surface.
    The map $F_{\Gamma,\mathcal B, \Lambda}:\mathcal{T}_{g,n}(\Lambda)\to \mathcal{T}_{g,n}(0)$ is an almost-isometry, i.e., there is a constant $C$ depending  on $\Lambda$ such that
  $$ d_{\mathrm{Th}}(X'_1,X'_2)-C\leq d_{\mathrm{Th}}(X_1,X_2)\leq d_{\mathrm{Th}}(X'_1,X'_2)+C,\ for\ any\ \  X_1,X_2 \in \mathcal{T}_{g,n},$$
  where $X'_1=F_{\Gamma}(X_1)$ and $X'_2=F_{\Gamma}(X_2)$.  Moreover, $C\to 0$ as $\Lambda\to0$.
  \end{theoremd}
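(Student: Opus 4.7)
The plan is to leverage the multiplicative length-ratio bound of Theorem B simultaneously at both endpoints $X_1$ and $X_2$ and then compare the resulting Thurston-metric suprema on $\mathcal{T}_{g,n}(\Lambda)$ and $\mathcal{T}_{g,n}(0)$. Concretely, I fix $X_1, X_2 \in \mathcal{T}_{g,n}(\Lambda)$, set $X_i' := F_{\Gamma,\mathcal{B},\Lambda}(X_i)$ for $i = 1,2$, and denote by $K = K(\Lambda)$ the multiplicative constant supplied by Theorem B. For every non-peripheral simple closed curve $[\alpha]$, I factor
\[
\frac{l_{X_2}([\alpha])}{l_{X_1}([\alpha])} \;=\; \frac{l_{X_1'}([\alpha])}{l_{X_1}([\alpha])} \cdot \frac{l_{X_2}([\alpha])}{l_{X_2'}([\alpha])} \cdot \frac{l_{X_2'}([\alpha])}{l_{X_1'}([\alpha])}.
\]
By Theorem B applied separately to $X_1$ and to $X_2$, each of the first two factors lies in $[K^{-1}, K]$, so
\[
K^{-2}\,\frac{l_{X_2'}([\alpha])}{l_{X_1'}([\alpha])} \;\leq\; \frac{l_{X_2}([\alpha])}{l_{X_1}([\alpha])} \;\leq\; K^{2}\,\frac{l_{X_2'}([\alpha])}{l_{X_1'}([\alpha])}.
\]
Taking the supremum over non-peripheral $[\alpha]$ and then applying $\log$, the constant $C := 2\log K$ witnesses the two-sided inequality claimed in Theorem D, and the Theorem B asymptotic $K \to 1$ immediately gives $C \to 0$ as $\Lambda \to 0$.

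The only delicate point is verifying that the Thurston-metric supremum on $\mathcal{T}_{g,n}(\Lambda)$ may be taken solely over non-peripheral classes, since that is precisely the class of curves Theorem B controls. Peripheral classes on any surface in $\mathcal{T}_{g,n}(\Lambda)$ are either cusps or cone points, which admit no geodesic representative, or boundary geodesics whose length is fixed by $\Lambda$ alone; hence for any peripheral $[\alpha]$ the ratio $l_{X_2}([\alpha])/l_{X_1}([\alpha])$ is either undefined or equal to $1$. Combined with Theorem C, which guarantees that the supremum of $l_{X_2}([\alpha])/l_{X_1}([\alpha])$ over non-peripheral $[\alpha]$ is always $\geq 1$ (with equality iff $X_1 = X_2$), this ensures peripheral contributions cannot increase either supremum. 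The analogous fact on $\mathcal{T}_{g,n}(0)$ is inherent in the classical Thurston-metric setup. Once this reduction is in hand the three-line calculation above completes the proof; the main obstacle is really this careful bookkeeping of peripheral curves rather than any further geometric estimate.
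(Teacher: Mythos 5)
Your proposal is correct and is essentially identical to the paper's own proof: both apply the multiplicative bound of Theorem~B at each of $X_1$ and $X_2$, take the supremum over $\Sim$, and obtain the constant $2\log C$ with $C\to1$ as $\Lambda\to0$. The extra discussion of peripheral classes is harmless but unnecessary, since the Thurston metric on $\mathcal{T}_{g,n}(\Lambda)$ is already defined (via Corollary~\ref{thmetric}) as a supremum over non-peripheral classes only.
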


\subsection{The Thurston boundary of $\mathcal{T}_{g,n}(\Lambda)$}
For the space $\mathcal{T}_{g,n}(0)$, the Thurston boundary is naturally identified with $\mathcal{PML}(S)$,
the space of projective classes of measured laminations (\cite{FLP}). We will prove that this is also true in our settings.
Denote by $\mathbb{R}_+^{\mathcal{S}}$ the set of non-negative functionals on the set of isotopy classes of non-peripheral simple closed curves (see Definition \ref{defi-a}).

\begin{theoreme}
   Suppose  $\Lambda\in(-\pi,\infty)^n$ and    $S$ is a non-exceptional surface.  Let $\Psi_{\Lambda}$ and $ \Pi$ be the maps defined as following:
  \begin{eqnarray*}
    \Psi_{\Lambda}:\mathcal{T}_{g,n}(\Lambda)& \longrightarrow &\mathbb{R}^{\mathcal{S}}_+\\
                X &\longmapsto & (l_X([\alpha]))_{\alpha\in\Sim},\\
  \end{eqnarray*}
and
  \begin{eqnarray*}
    \Pi: \mathbb{R}^{\mathcal{S}}_+ &\longrightarrow& P\mathbb{R}^{\mathcal{S}}_+\\
        (s_{\alpha})_{\alpha\in\Sim}&\longmapsto &[(s_{\alpha})_{\alpha\in\Sim}].
  \end{eqnarray*}
  Then
  \begin{enumerate}[(a)]
    \item both $\Psi_{\Lambda}$ and $\Pi\circ\Psi_\Lambda$ are embeddings, where $\mathcal{T}_{g,n}(\Lambda)$ is equipped with the topology induced by $d_{\mathrm{Th}}$ and $\mathbb{R}^{\mathcal{S}}_+$ is equipped with the weak topology;
    \item $\mathcal{T}_{g,n}(\Lambda)\ni X_n \to \xi\in\mathcal{PML}(S) \iff \mathcal{T}_{g,n}(0)\ni F_{\Gamma,\mathcal B,\Lambda}(X_n)\to \xi\in\mathcal{PML}(S)$.
       As a result, the boundary of $\mathcal{T}_{g,n}(\Lambda)$ in $P\mathbb{R}^{\mathcal{S}}_+$ (i.e., the Thurston boundary of $\mathcal{T}_{g,n}(\Lambda)$) is homeomorphic to $\mathcal{PML}(S)$.
   \end{enumerate}
\end{theoreme}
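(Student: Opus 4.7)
The plan is to prove part (a) by combining Theorems A and C with a properness argument for the finite-length parametrization, and to prove part (b) by applying the additive length comparison of Theorem B together with the classical description of the Thurston boundary of $\mathcal{T}_{g,n}(0)$.

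For part (a), injectivity of $\Psi_\Lambda$ is exactly Theorem A. For $\Pi\circ\Psi_\Lambda$, if two points map to the same projective class then $l_{X_1}([\alpha])=t\,l_{X_2}([\alpha])$ for some $t>0$ and every $[\alpha]$; swapping if necessary to arrange $t\geq 1$ gives $l_{X_1}([\alpha])\geq l_{X_2}([\alpha])$ for all $[\alpha]$, and Theorem C forces $X_1=X_2$ and $t=1$. Continuity of $\Psi_\Lambda$ is immediate from the definition of $d_{\mathrm{Th}}$, which gives the $1$-Lipschitz bound $|\log(l_Y([\alpha])/l_X([\alpha]))|\leq\max\{d_{\mathrm{Th}}(X,Y),d_{\mathrm{Th}}(Y,X)\}$ on each length functional. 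For continuity of $\Psi_\Lambda^{-1}$, let $\alpha_1,\dots,\alpha_N$ be the finite determining family from Theorem A and set $\psi(X)=(l_X([\alpha_i]))_{i=1}^{N}$. I would establish that $\psi$ is proper by transferring the problem to $\mathcal{T}_{g,n}(0)$: Theorem B shows that each $l_X([\alpha_i])$ differs from $l_{F_{\Gamma,\mathcal B,\Lambda}(X)}([\alpha_i])$ by at most a constant depending only on $\Lambda$, so bounded lengths in $\mathcal{T}_{g,n}(\Lambda)$ correspond to bounded lengths in the cusped Teichm\"uller space, where properness of the analogous finite-length map is classical (Mumford-style compactness); Theorem D then lifts the resulting subsequential convergence back to the $d_{\mathrm{Th}}$-topology on $\mathcal{T}_{g,n}(\Lambda)$. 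A continuous, injective, proper map into a Hausdorff space is a topological embedding; passing to the projective quotient preserves this once Theorem C has excluded collapse under $\Pi$.

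For part (b), set $X'_n=F_{\Gamma,\mathcal B,\Lambda}(X_n)$ and apply Theorem B to obtain
$$|l_{X_n}([\alpha])-l_{X'_n}([\alpha])|\leq D\sum_{j=1}^{3g-3+n} i([\alpha],[\gamma_j]),$$
with $D=D(\Lambda)$ finite for each fixed $[\alpha]$. Assume $X_n\to\xi\in\mathcal{PML}(S)$ in $P\mathbb{R}^{\mathcal S}_+$ and choose positive normalizers $t_n$ with $t_n l_{X_n}([\alpha])\to i(\xi,[\alpha])$ for every $[\alpha]$. The crucial observation is that $t_n\to 0$: otherwise, on a subsequence $t_n$ stays bounded below, forcing the full length spectrum of $X_n$ to stay bounded above, which (again via Theorem B transport to $\mathcal{T}_{g,n}(0)$) yields a subsequential limit of $X_n$ inside $\mathcal{T}_{g,n}(\Lambda)$ whose projective length spectrum equals that of $i(\xi,\cdot)$, contradicting that $\xi\in\mathcal{PML}(S)$ is a boundary point of the length-spectrum image. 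Multiplying the displayed inequality by $t_n\to 0$ gives $t_n l_{X'_n}([\alpha])\to i(\xi,[\alpha])$ for every $[\alpha]$, so $X'_n\to\xi$; the converse implication is symmetric. The boundary identification then follows: the Thurston boundary of $\mathcal{T}_{g,n}(0)$ inside $P\mathbb{R}^{\mathcal S}_+$ is classically $\mathcal{PML}(S)$ (by \cite{FLP}), and the equivalence just proved shows the Thurston boundary of $\mathcal{T}_{g,n}(\Lambda)$ coincides with it.

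The main obstacle is establishing the properness statements underpinning both parts: in (a) properness of $\psi$, and in (b) the exclusion of the $t_n\not\to 0$ alternative (equivalently, disjointness of the length-spectrum image of $\mathcal{T}_{g,n}(\Lambda)$ from $\mathcal{PML}(S)$ in $P\mathbb{R}^{\mathcal S}_+$). In each case one must rule out degenerations in $\mathcal{T}_{g,n}(\Lambda)$ along which the tracked lengths remain bounded while the underlying surface escapes to infinity. Theorem B is the indispensable tool here, reducing every such question to its well-understood analogue in the cusped Teichm\"uller space $\mathcal{T}_{g,n}(0)$, after which classical properness and the Fathi--Laudenbach--Po\'enaru identification of the Thurston boundary close the argument.
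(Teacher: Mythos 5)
Your argument is correct, and for part (b) it is essentially the paper's own proof: choose normalizers $t_n$ with $t_n l_{X_n}([\alpha])\to\xi([\alpha])$, show $t_n\to 0$ because some length must blow up as $X_n$ leaves every compact set, and then kill the additive error from Theorem B by multiplying by $t_n$; the converse is symmetric and the boundary identification is imported from \cite{FLP}. Where you genuinely diverge from the paper is the injectivity of $\Pi\circ\Psi_\Lambda$ in part (a). You deduce it in two lines from Theorem C: if $l_{X_1}=t\,l_{X_2}$ with $t\ge 1$ then $l_{X_1}\ge l_{X_2}$ pointwise, so $X_1=X_2$ and $t=1$. The paper instead runs an explicit computation: it takes a generalized $\mathcal X$-piece (or the one-holed torus), considers the curves $\delta_n$ obtained by iterated Dehn twisting, and evaluates the limit $\lim_n e^{l_X([\delta_n])/2-nl_X([\gamma])}$, which equals an explicit expression in the cone angles and the widths $|a_2|,|a_2'|$; this limit scales one way under $l_X\mapsto k\,l_X$ ($k>1$) while the explicit expression is decreasing in $l_X(\gamma)$, giving a contradiction. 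Your route is shorter and cleaner, at the price of resting on the generalized McShane identity underlying Theorem C; the paper's route is self-contained hyperbolic trigonometry and yields quantitative asymptotic information that is reused elsewhere in Section 7. For the topological-embedding part of (a) the paper simply defers to the classical argument for $\mathcal{T}_{g,n}(0)$, so your properness sketch (finite determining family, transport of boundedness via Theorem B, Mumford-type compactness in the cusped space) is an acceptable, if not fully detailed, filling-in of the same step; just make sure the finite family you track is filling so that bounded lengths really do confine $F_{\Gamma,\mathcal B,\Lambda}(X)$ to a compact set.
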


The paper is organized as following. In Section \ref{sec:met}, we recall basic facts about hyperbolic cone surfaces. In Section \ref{sec:fn}, we study the Fenchel-Nielsen coordinates of the Teichm\"uller space $\T$. In Section \ref{sec:rigidity}, we prove Theorem A and Theorem C. In Section \ref{sec:comp-Y}, we compare the lengths of simple closed curves between hyperbolic cone surfaces and prove Theorem B. In Section \ref{sec:al-iso}, we prove Theorem D. In Section \ref{sec:thurstonb}, we study some basic properties of the Teichm\"uller space $\mathcal T_{g,n}(\Lambda)$  and prove Theorem E.

 \textbf{Acknowledgements  }
  I  would like to thank Lixin Liu and Weixu Su for their useful suggestions for improving  this manuscript. I also thank the referee for numerous comments  and suggestions.



 \section{Hyperbolic cone metrics}\label{sec:met}

In this section, we study some basic properties of hyperbolic cone surfaces, with or without boundary, and with each cone angle less than $\pi$.

\begin{defi}[\cite{CHK}]\label{def:cone-surface}
 A \textit{hyperbolic cone-surface} is a two-dimensional manifold $X$, with or without boundary, which can be triangulated by hyperbolic triangles.
\end{defi}
The \textit{singular locus} $Cone(X)$ of a hyperbolic cone-surface $X$ consists of    interior point of $X$ which have no neighbourhoods isometric to a ball in the hyperbolic plane and boundary points which have no neighbourhoods isometric to a half ball in the hyperbolic plane. It follows that
\begin{itemize}
\item $Cone(X)$ is contained in the set of vertices of the hyperbolic triangulation of $X$, and  it is a finite set;
\item At each point of $Cone(X)$, there is a \textit{cone angle} which is the sum of the angles of the dihedral angles of the triangles containing the point;
\item $X\backslash Cone(X)$ has a smooth Riemannian metric of constant curvature $-1$, but this metric is incomplete if there is a cone point with positive cone angle;
\end{itemize}
In this paper, we are interested in the case where $X$ satisfies the following requirements:
\begin{enumerate}[(a)]
  \item X has at most $n$ cone points and  every cone angle is strictly smaller than $\pi$. In particular, a cone point with zero cone angle is called a {\it cusp}.
  \item Every boundary component of $X$ consists of at most one geodesic.
\end{enumerate}

\remark   To distinguish a positive cone angle point and the zero angle point, whenever we mention a cone point we mean a cone point with a positive cone angle.

\begin{defi}\label{defi-a}
  A {\it generalized boundary component} of a  cone surface $X$ is a geodesic boundary component,  a cusp, or a cone point. We denote by $\Sigma_X$ the set of all generalized boundary components of $X$. A  non-trival simple closed curve on $X$ is called  \textit{non-peripheral}  if it is not homotopic to any generalized boundary component. A simple arc with endpoints on the generalized boundary components is called \textit{ non-peripheral} if it is not homotopic to any subarc of  the generalized boundary components.
  \end{defi}

\begin{defi}
 A {\it marked hyperbolic cone surface} is a pair $(f,X)$, where $X$ is a hyperbolic cone-surface, and $f:S\to X$ is a homeomorphism. Two marked hyperbolic cone-surfaces $(f,X)$ and $(f',X')$ are called  equivalent if there is an isometry isotopic to $f\circ (f')^{-1}$.
\end{defi}
A necessary condition of two marked hyperbolic cone surfaces to be  equivalent is that they have the same numbers of cone points, of cusps and of geodesic boundaries.  Denote by $\T$ the space of equivalent classes of marked hyperbolic cone surfaces.

\remark
Since $S$ is an oriented surface obtained by removing $n$ points from a closed surface, a marked hyperbolic cone surface may induce an incomplete metric of curvature $-1$ on $S$. From now on, whenever we talk about a hyperbolic cone metric on $S$, we mean its metric completion.


\vskip 10pt
Next, we collect some basic properties of hyperbolic cone surfaces, for more details, we refer to ~\cite{CHK}, \cite{DP} and \cite{TWZ}.
\begin{proposition}[\cite{DP}\cite{TWZ}]\label{prop:elementary}
Let $X$ be a non-exceptional cone-surface.
\begin{enumerate}[(a)]
\item
Every non-trivial simple closed curve on $X\setminus \Sigma_X$ is freely
homotopic to either a unique simple closed geodesic or a unique cone
point or a cusp. 
\item
If two distinguished, non-trivial closed curves $\alpha$ and $\beta$ intersect $n$ times, their corresponding geodesic will intersect at most n times.
\item
Given two non-intersecting smooth simple curves $\alpha$ and
$\beta$ on $S$ there is at least one geodesic path $c$ between
them such that $d(\alpha,\beta)$ is realized by $c$. Such a path
$c$ is perpendicular to $\alpha$ and $\beta$. If $\alpha$ and
$\beta$ are geodesic, in a free homotopy class of paths with end
points moving on $\alpha$ and $\beta$, such a path $c$ is unique.
This property remains true for singular points in place of one or
both geodesics. \end{enumerate}
\end{proposition}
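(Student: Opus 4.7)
The plan is to assemble these three statements by standard arguments from hyperbolic geometry adapted to the cone setting, following \cite{DP} and \cite{TWZ}. The common thread is that since every cone angle is strictly less than $\pi$, simple geodesics avoid the singular locus $\mathrm{Cone}(X)$, so classical arguments on smooth hyperbolic surfaces carry over with only mild modification.

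For (a), I would apply curve shortening to a non-trivial simple closed curve $\gamma\subset X\setminus\Sigma_X$. A length-minimizing sequence in the free homotopy class of $\gamma$ either has lengths tending to zero or escaping into a neighborhood of $\Sigma_X$, in which case $\gamma$ is homotopic to a cone point, cusp, or geodesic boundary; otherwise Arzel\`a--Ascoli yields a length-realizing loop $\gamma_0$. The cone-angle hypothesis prevents $\gamma_0$ from passing through any cone point $p$ of angle $\theta<\pi$: the two incoming geodesic arcs would meet at an angle at most $\theta<\pi$ on one side, and a short shortcut across that side strictly decreases length, contradicting minimality. Uniqueness of the geodesic representative follows because two distinct simple representatives would cobound an embedded annulus, whose Gauss--Bonnet evaluation (with each interior cone-angle contribution $2\pi-\theta_p>\pi$ and geodesic boundaries) is incompatible with positive area.

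Part (b) is the main obstacle and I would handle it by a bigon argument. If the geodesic representatives $\alpha_0,\beta_0$ of $[\alpha],[\beta]$ intersect more than $n=i([\alpha],[\beta])$ times, innermost-disk surgery produces a geodesic bigon $D$ with two corners of interior angles $\vartheta_1,\vartheta_2\in(0,\pi)$. Gauss--Bonnet for $D$ gives
\[
-\mathrm{Area}(D)+(\pi-\vartheta_1)+(\pi-\vartheta_2)+\sum_{p\in \mathrm{Cone}(X)\cap D^{\circ}}(2\pi-\theta_p)=2\pi.
\]
Since $\alpha_0,\beta_0$ avoid $\mathrm{Cone}(X)$ by part (a), the delicate step is to show that $D^{\circ}$ cannot enclose the combination of cone-point curvature needed to make this identity consistent with $\vartheta_i>0$ and $\mathrm{Area}(D)>0$; this is done by observing that an innermost bigon whose sides are geodesic arcs disjoint from $\mathrm{Cone}(X)$ would, if it contained a cone point $p$, allow the sides to be pushed off $p$ on its small-angle side, producing a smaller bigon and contradicting innermost-ness.

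For (c), a length-minimizing sequence of paths from $\alpha$ to $\beta$ in the specified homotopy class stays in a compact region, because $\alpha$ and $\beta$ are compact and the infimum distance is positive by disjointness; Arzel\`a--Ascoli yields a minimizing geodesic $c$, and first variation forces $c\perp\alpha$ and $c\perp\beta$ at its endpoints. When both $\alpha,\beta$ are geodesics, uniqueness in the given homotopy class follows by lifting to the universal cover of $X\setminus \mathrm{Cone}(X)$: strict convexity of the distance between disjoint geodesic lifts in $\mathbb{H}^2$, combined with the avoidance of cone points by $c$, rules out a second minimizer. The case in which one or both of $\alpha,\beta$ is replaced by a singular point is handled identically, treating the singular point as a degenerate boundary component of zero length.
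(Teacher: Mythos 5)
First, note that the paper does not prove this proposition at all: it is quoted verbatim from \cite{DP} and \cite{TWZ} and used as a black box, so there is no ``paper's proof'' to compare against. Your proposal must therefore stand on its own, and as written it has two genuine gaps.

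The first is in the uniqueness step of (a). You claim that an embedded annulus cobounded by two homotopic simple closed geodesics has a Gauss--Bonnet evaluation ``incompatible with positive area'' even when interior cone points contribute $2\pi-\theta_p>\pi$. This is false: for an annulus $A$ with geodesic boundary the identity reads $-\mathrm{Area}(A)+\sum_{p}(2\pi-\theta_p)=0$, which is perfectly consistent with $\mathrm{Area}(A)>0$ as soon as one cone point lies inside. The correct argument is that no cone point \emph{can} lie inside: the free homotopy takes place in $X\setminus\Sigma_X$, and two boundary circles of an annulus are not freely homotopic in the annulus punctured at an interior point, so the annulus is disjoint from $\Sigma_X$ and Gauss--Bonnet then gives $\mathrm{Area}(A)=0$, the desired contradiction. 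The same remark applies if the two representatives intersect and one works with bigons instead.

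The second, more serious, gap is in (b), and you flag it yourself as ``the delicate step'' without actually closing it. Your proposed fix --- pushing the two sides of an innermost bigon off a cone point on its small-angle side to get a smaller bigon --- is not a legitimate operation: the sides of the bigon are arcs of the fixed geodesics $\alpha_0$ and $\beta_0$ and cannot be perturbed while remaining geodesic, so ``innermost-ness'' is never contradicted. The standard way to close this is to invoke the bigon criterion in the punctured surface $X\setminus\Sigma_X$: if $\alpha_0$ and $\beta_0$ intersect more than $n\ge i([\alpha],[\beta])$ times, they bound a bigon that is embedded in $X\setminus\Sigma_X$, hence contains no cone points, and such a bigon develops isometrically onto a geodesic bigon in $\mathbb{H}^2$ (equivalently, Gauss--Bonnet with empty singular set gives $-\mathrm{Area}(D)+(\pi-\vartheta_1)+(\pi-\vartheta_2)=2\pi$, forcing $\mathrm{Area}(D)<0$), which is impossible. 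Part (c) is essentially fine as a sketch, though identifying the universal cover of $X\setminus\mathrm{Cone}(X)$ with $\mathbb{H}^2$ is loose --- one should instead develop the relevant simply connected region (e.g.\ the complement of the two geodesics in a homotopy rectangle) into $\mathbb{H}^2$ and use convexity there.
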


\begin{theorem}[Pants decomposition~\cite{DP}]\label{thm:collartheorem}
Let $X$ be a non-exceptional cone-surface of genus $g$ with $n$ cone
points $\Delta_1,\ldots, \Delta_n$ .Let $\gamma_1,\ldots,\gamma_m$ be disjoint simple
closed geodesics on $M$. Then the followings hold.
\begin{enumerate}[(a)]
\item $m\leq 3g-3 +n$.\\

\item There exist simple closed geodesics
$\gamma_{m+1},\ldots,\gamma_{3g-3+n}$ which together with
$\gamma_1,\ldots,\gamma_m$ form a partition of $S$.\\







\end{enumerate}

\end{theorem}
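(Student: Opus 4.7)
The plan is to obtain part (a) from an Euler-characteristic count and to prove part (b) by iteratively extending the given collection, using the existence and simplicity of geodesic representatives supplied by Proposition~\ref{prop:elementary}.

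For (a), first observe that two disjoint simple closed geodesics in the same free homotopy class of $X\setminus\Sigma_X$ must coincide, since by Proposition~\ref{prop:elementary}(a) every non-trivial class contains at most one geodesic representative. Hence $\gamma_1,\ldots,\gamma_m$ lie in pairwise distinct, non-peripheral, non-trivial classes. Cutting $X$ along these curves yields components $Y_1,\ldots,Y_k$ satisfying $\sum_j\chi(Y_j)=\chi(S)=2-2g-n$ and $\sum_j b(Y_j)=2m+n$, where $b(Y_j)$ counts the total number of boundary components (including cone points and cusps). No $Y_j$ is a disk or annulus, for otherwise one of the $\gamma_i$ would be trivial or peripheral. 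Consequently, for each $j$ we have $3g_j+b(Y_j)\geq 3$, which rearranges to $b(Y_j)\le -3\chi(Y_j)$. Summing, $2m+n\le -3(2-2g-n)=6g-6+3n$, i.e.\ $m\le 3g-3+n$.

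For (b), I proceed by induction on $3g-3+n-m$. If every $Y_j$ is topologically a pair of pants, the collection is already a pants decomposition and we are done. Otherwise some component $Y$ has positive complexity and therefore admits a non-peripheral essential simple closed curve $\alpha$. By Proposition~\ref{prop:elementary}(a), $\alpha$ is freely homotopic in $X$ to a cone point, a cusp, or a simple closed geodesic; the first two possibilities are excluded by non-peripherality, producing a simple closed geodesic $\gamma_{m+1}$. Applying Proposition~\ref{prop:elementary}(b) to the pair $(\alpha,\gamma_i)$, whose geometric intersection number is zero, shows that $\gamma_{m+1}$ is disjoint from each $\gamma_i$; applying it to $\alpha$ itself shows that $\gamma_{m+1}$ is simple. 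Adding $\gamma_{m+1}$ to the family and iterating terminates at $m=3g-3+n$ by part (a).

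The delicate point is the simultaneous simplicity and disjointness of $\gamma_{m+1}$, both of which rest entirely on Proposition~\ref{prop:elementary}(b). That proposition in turn hinges on the standing hypothesis that every cone angle is strictly smaller than $\pi$; without this bound the geodesic representative of a simple curve on a cone surface can develop self-intersections or cross previously chosen geodesics, and the extension argument collapses. All other steps amount to routine topological bookkeeping.
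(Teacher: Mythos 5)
The paper does not prove this theorem: it is imported from Dryden--Parlier \cite{DP}, so there is no internal proof to compare against. Your argument is correct and is essentially the standard one (and the one underlying \cite{DP}): the Euler-characteristic count for (a) works once one notes, as you do, that Proposition \ref{prop:elementary}(a) forbids a closed geodesic from being null-homotopic or homotopic to a cone point, a cusp, or another $\gamma_i$, so that no complementary component is a disk, a once-punctured disk, or an annulus; and the inductive extension for (b) correctly isolates the cone-angle bound $\theta<\pi$ as the only place where geometry, rather than topology, enters. Two small points deserve tightening. First, the simplicity of the new geodesic $\gamma_{m+1}$ is already asserted in Proposition \ref{prop:elementary}(a) (``a unique \emph{simple} closed geodesic''); invoking part (b) of that proposition with a curve against itself is not quite what that statement covers. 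Second, you should record why a curve that is essential and non-peripheral in a complementary component $Y$ remains essential, non-peripheral, and non-homotopic to each $\gamma_i$ inside $X$ --- this follows from the $\pi_1$-injectivity of $Y$ in $X$, and it is precisely what guarantees that $\gamma_{m+1}$ is genuinely new and that the induction advances. Neither point is a genuine gap.
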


Denote by $[\alpha]$ the isotopy class of a simple closed curve $\alpha$.
Denote by $\Sim$ the set of isotopy classes of non-peripheral simple closed curves.
From Proposition \ref{prop:elementary}, it follows that every hyperbolic cone metric on $S$ induces a functional on $\Sim$ defined by
\begin{eqnarray*}
  l:\mathcal{T}_{g,n}&\longrightarrow & \mathbb{R}_+^{\mathcal S}\\
  X&\longmapsto&(l_X([\alpha]))_{[\alpha]\in\Sim},
\end{eqnarray*}
where $l_X([\alpha])$ represents the length of the geodesic representative in $[\alpha]$, and where $ \mathbb{R}_+^{\mathcal S}$ is the space of  nonnegative functionals on $\Sim$. We equip $ \mathbb{R}_+^{\mathcal S}$  with the weak topology.
\begin{defi}\label{def:mls}
  The sequence $(l_X([\alpha]))_{[\alpha]\in\Sim}$ is called {\it the marked length spectrum} about non-peripheral simple closed curves of $X$, denoted by $\mathcal{MLSS}(X)$.
\end{defi}

\subsection{Generalized Y-pieces}\label{sec:yvj}

\begin{figure}

      \subfigure[]
   {
     \begin{minipage}[tbp]{30mm}
     \includegraphics[width=30mm]{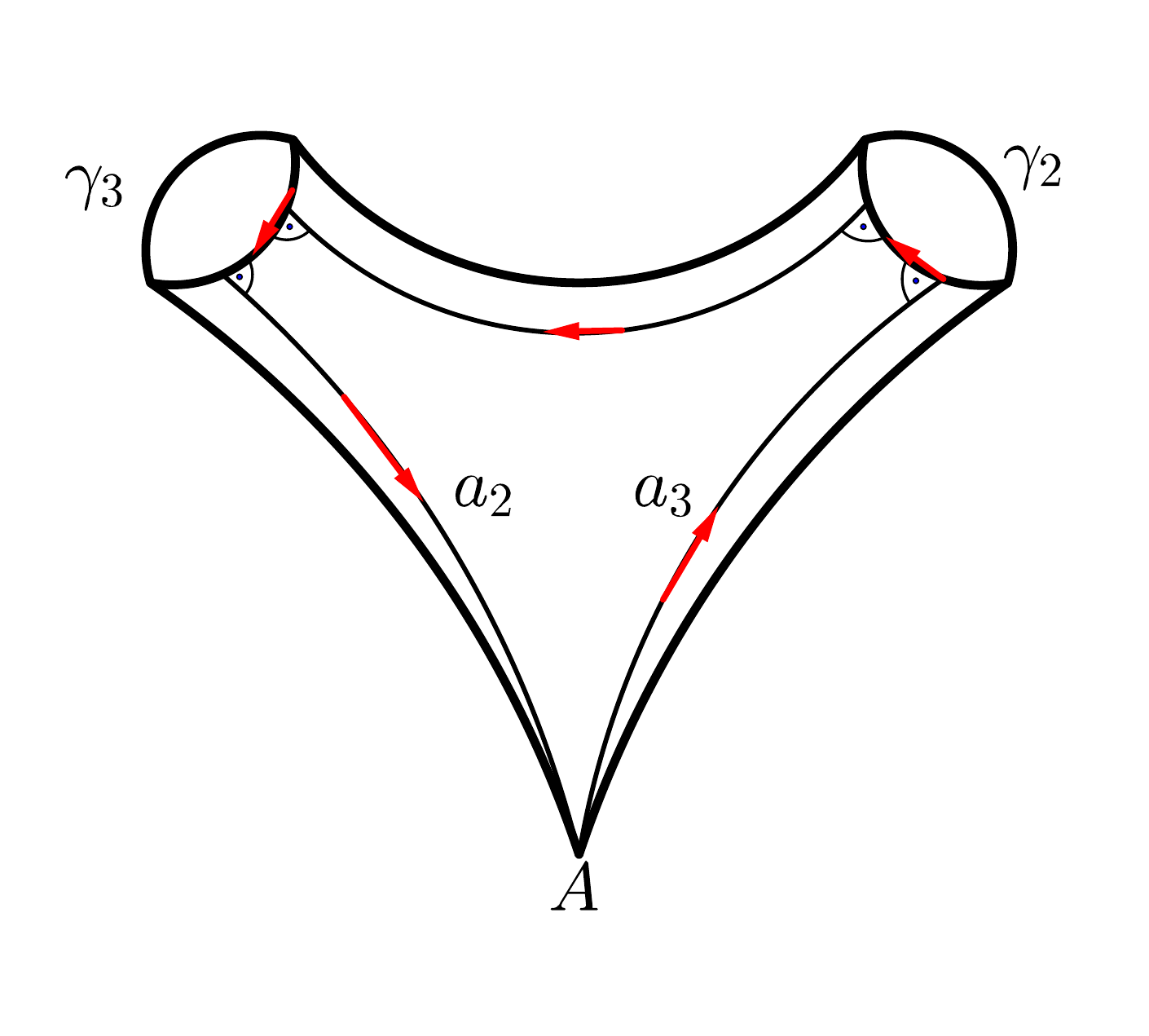}
    \end{minipage}
    \label{fig:V-1}
   }
      \subfigure[]
   {
     \begin{minipage}[tbp]{30mm}
     \includegraphics[width=30mm]{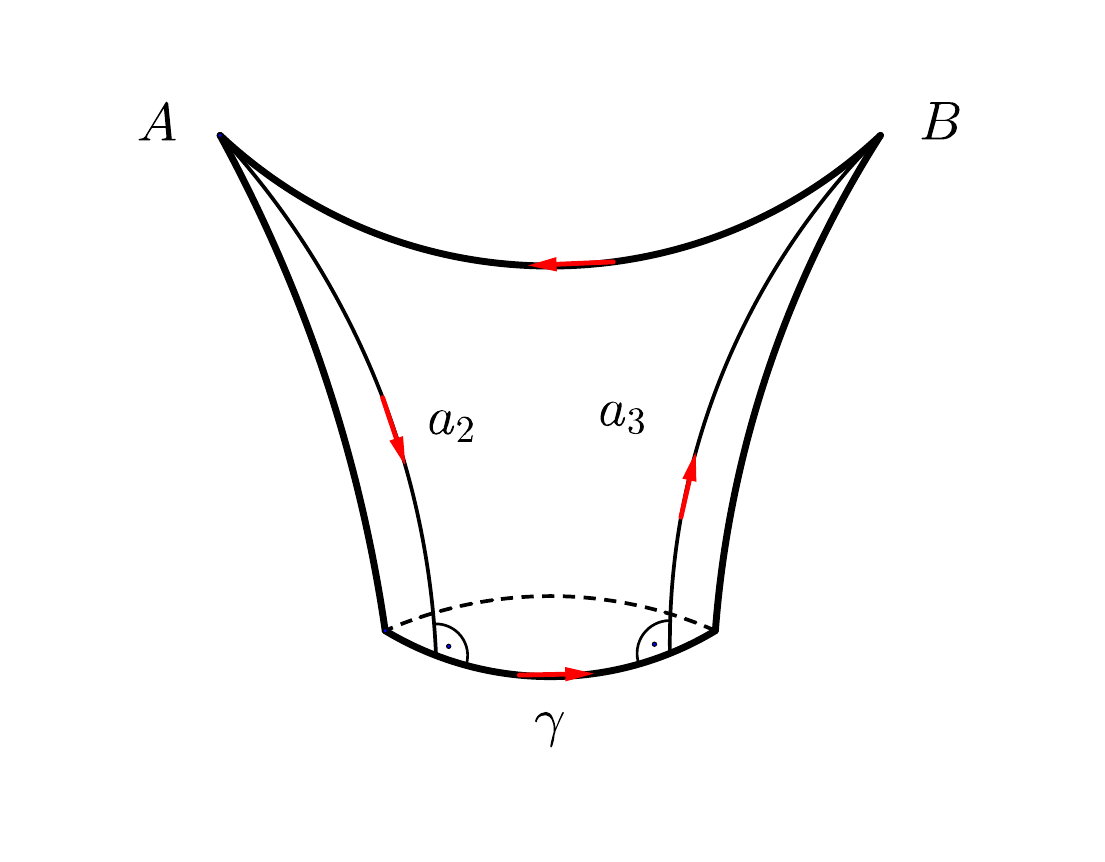}
    \end{minipage}
    \label{fig:J-1}
   }
   \subfigure[]
   {
     \begin{minipage}[tbp]{30mm}
     \includegraphics[width=30mm]{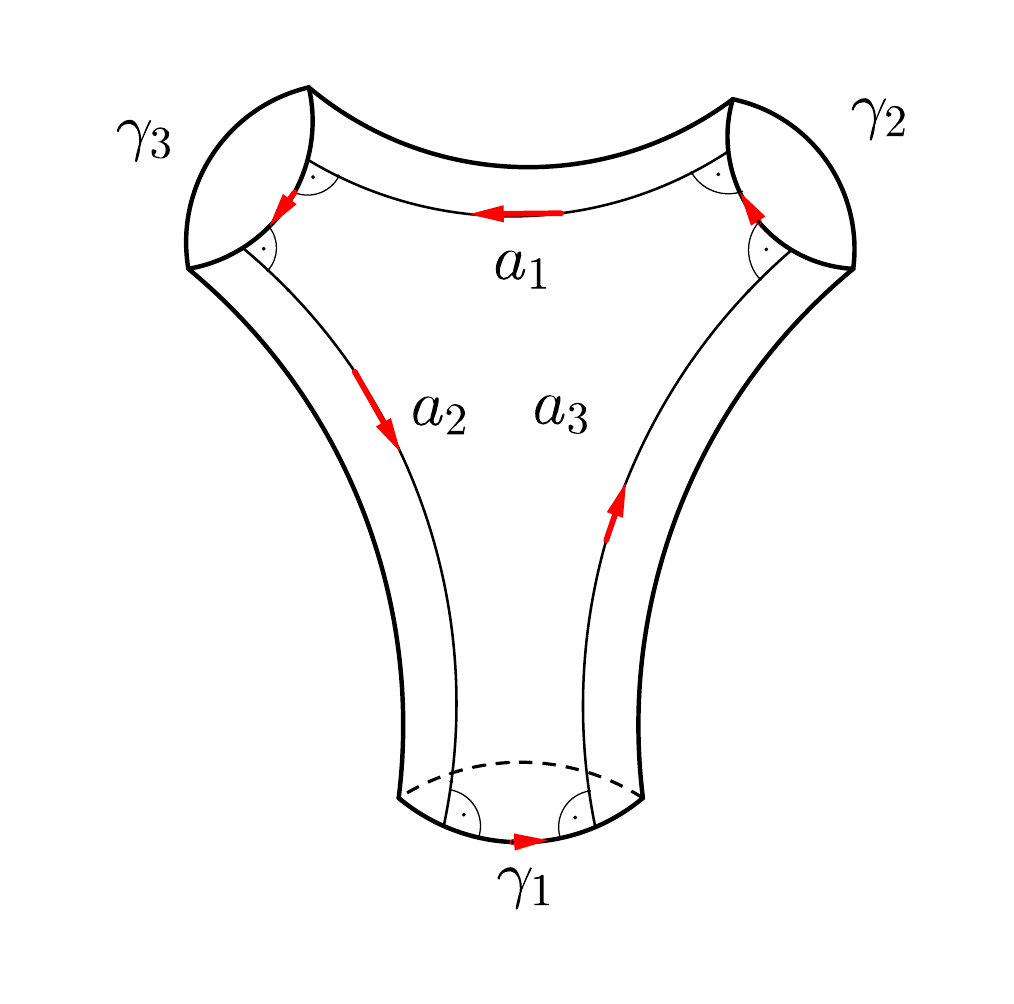}
    \end{minipage}
    \label{fig:Y-piece-1}
   }

  \caption{Generalized Y-pieces. They are V-piece, Joker's hat and Y-piece respectively.}
 \label{fig:Y-piece}
\end{figure}

A \yp  is a sphere with three geodesic boundary components; a \vp   is a sphere with two geodesic boundary components and a cone point with cone angle less than $\pi$, or a sphere with two geodesic boundary components and a cusp; and a \jp  is a sphere with a geodesic boundary component and two cone points with each cone angle less than $\pi$, or a sphere with a geodesic boundary component and two cusps, or a sphere with a geodesic boundary component, a cone point with cone angle less than $\pi$ and a cusp. For convenience,  all these pieces are called {\it generalized Y-pieces}.  or pairs of pants (see Fig.~\ref{fig:Y-piece}).

\subsection{Generalized X-piece}\label{sec:xtw}A generalized $\mathcal X-piece$ is a hyperbolic cone surface obtained by pasting together two generalized Y-pieces along two boundary geodesics of  the same length.
Let $G$ be a V-piece with generalized boundary components $\gamma_1,\gamma_2,\gamma_3$. Let  $G'$ be a Joker's hat with generalized boundary components $\gamma_1',\gamma_2',\gamma_3'$. Assume that $\gamma_1$, $\gamma_1'$ are geodesic boundary components and that they have the same length $l$. Recall that a generalized Y-piece consists of two isometric hyperbolic polygons. Choose an orientation of $\gamma_1$ (resp. $\gamma_1'$) such that $G$ (resp. $G'$) sits on the left. Parameterize $\gamma_1$ (resp. $\gamma_1'$) by arc length such that the basepoint `0' is one of the two vertices  of the corresponding polygon contained in $\gamma_1$ (resp. $\gamma_1$').  We paste $G$ and $G'$ along $\gamma_1,\ \gamma'_1$ with pasting condition:
\begin{equation}\label{idd}
 \gamma_1(s)=\gamma'_1(tl-s),\ s\in\mathbb R/\sim,\ t\in \mathbb R,
\end{equation}
where $\sim$ represents an equivalent relation such that $s\sim s'$ if $s-s'=kl$ for some $k\in\mathbb Z$. 
In this way, we get a \vjp   $ X^t$:
$$ X^t\triangleq G\cup G' \mod(~(\ref{idd})).$$
The curve $\gamma_1$ ($\gamma'_1)$ is called the \textit{waist} of the $VJ\ piece$.

By similar operations, we get \yyp, \yvp, \yjp, \vvp, \jjp. All these pieces are called generalized $\xp$s.

It follows from the pasting condition (\ref{idd}) that $X^{t+1}$ is isometric to $ X^t$, i.e. $t$ is defined only in $\mathbb R/\mathbb Z$.
To extend the domain of $t$ from $\mathbb R/\mathbb Z$ to $\mathbb R$, we need to add some ``marking" to $X^t$ (see \S\ref{sec:fn}).


\subsection{Hyperbolic geometry}
For convenience, we collect some identities of hyperbolic geometry which can be found in \cite{Bu} and \cite{Fen}.
\begin{figure}
    \begin{center}
        \subfigure[]
    {
    \begin{minipage}[tbp]{40mm}
      \includegraphics[width=40mm]{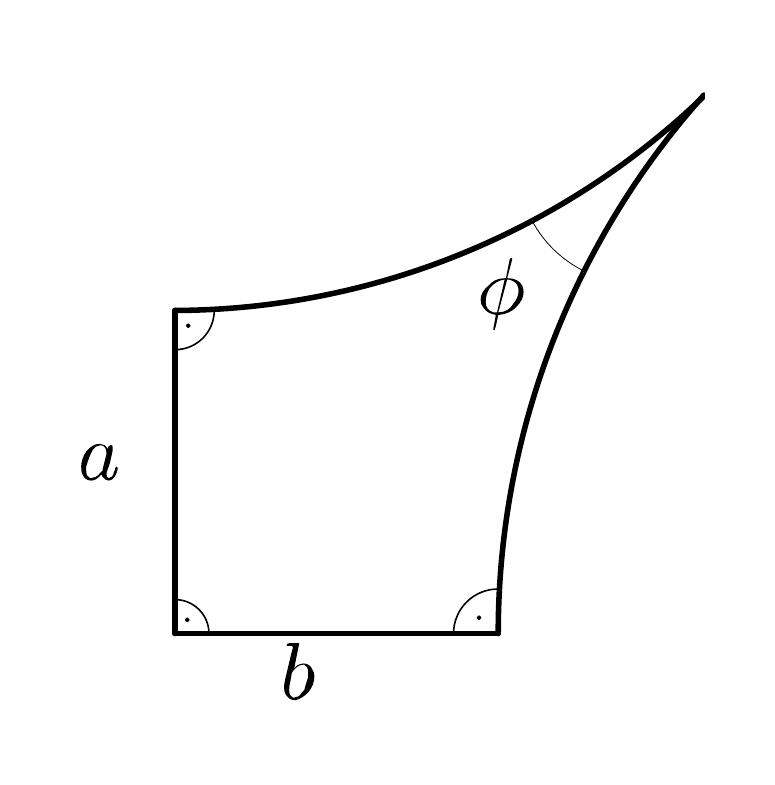}
    \label{fig:tri-rectangle}
    \end{minipage}
    }
            \subfigure[]
    {
    \begin{minipage}[tbp]{40mm}
      \includegraphics[width=40mm]{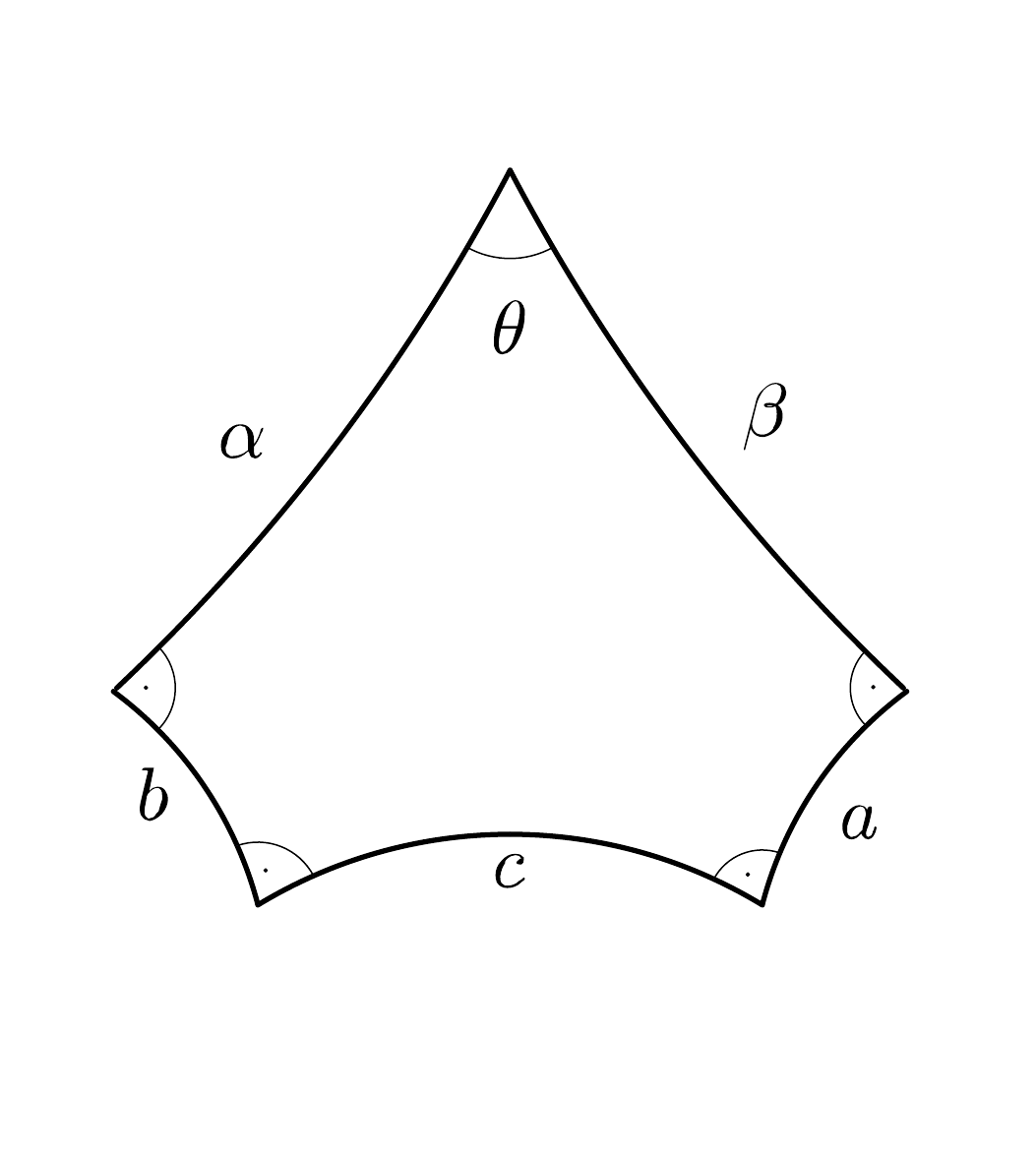}
    \label{fig:Pentagon}
    \end{minipage}
    }
            \subfigure[]
    {
    \begin{minipage}[tbp]{40mm}
      \includegraphics[width=40mm]{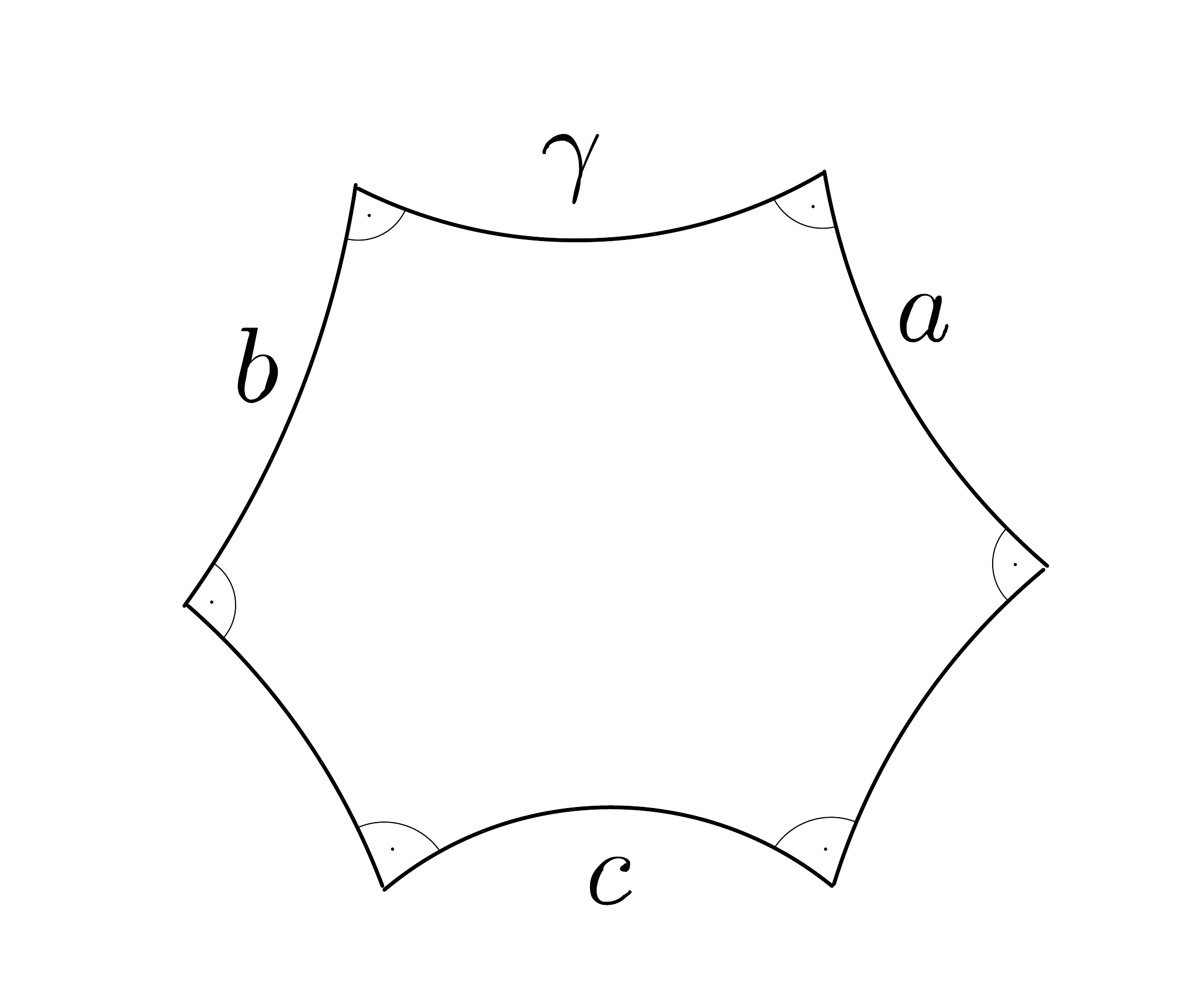}
    \label{fig:Hexagon}
    \end{minipage}
    }
            \subfigure[]
    {
    \begin{minipage}[tbp]{40mm}
      \includegraphics[width=40mm]{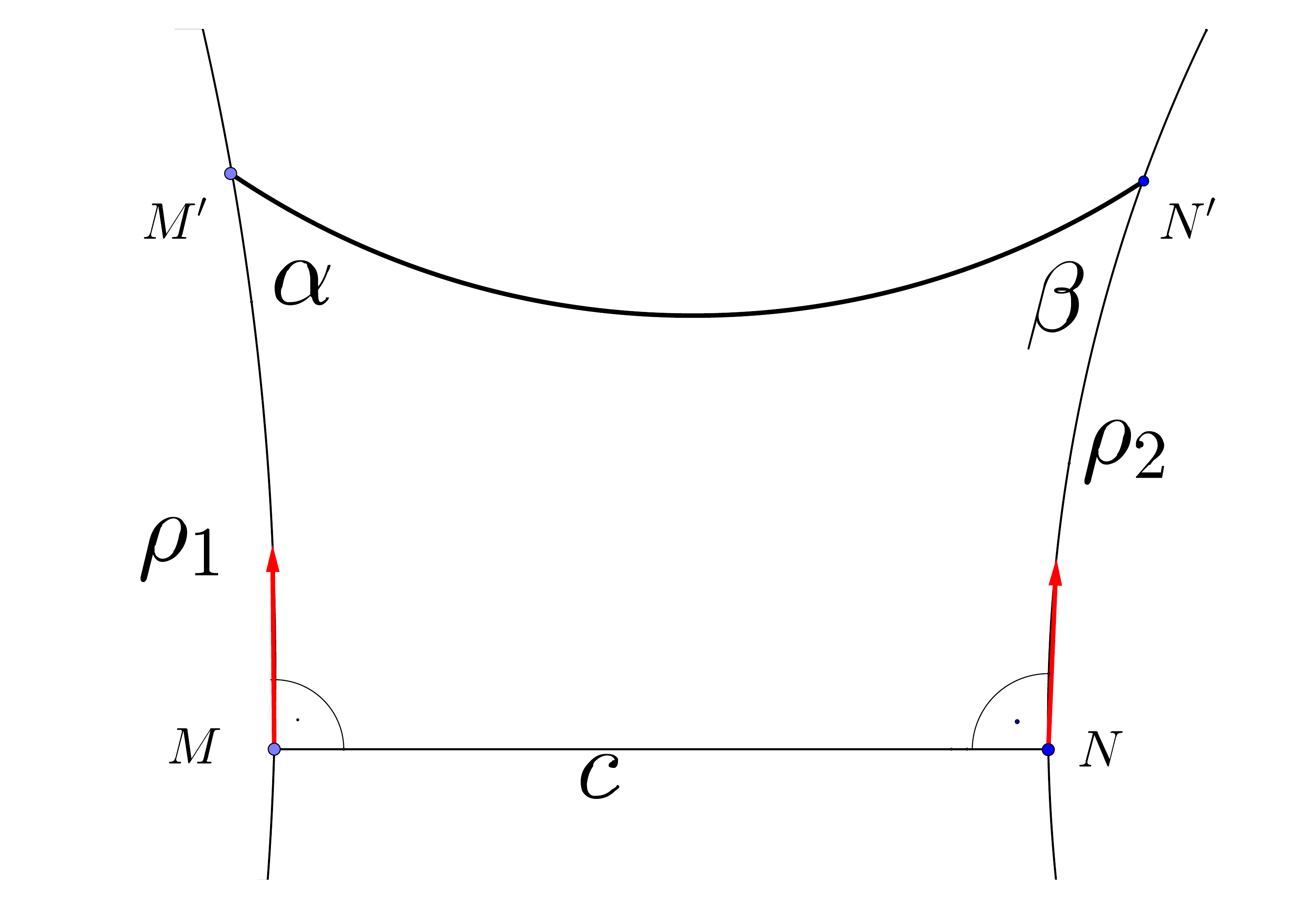}
      \label{fig:Quadri-dist}
    \end{minipage}
    }
                \subfigure[]
    {
    \begin{minipage}[tbp]{40mm}
      \includegraphics[width=40mm]{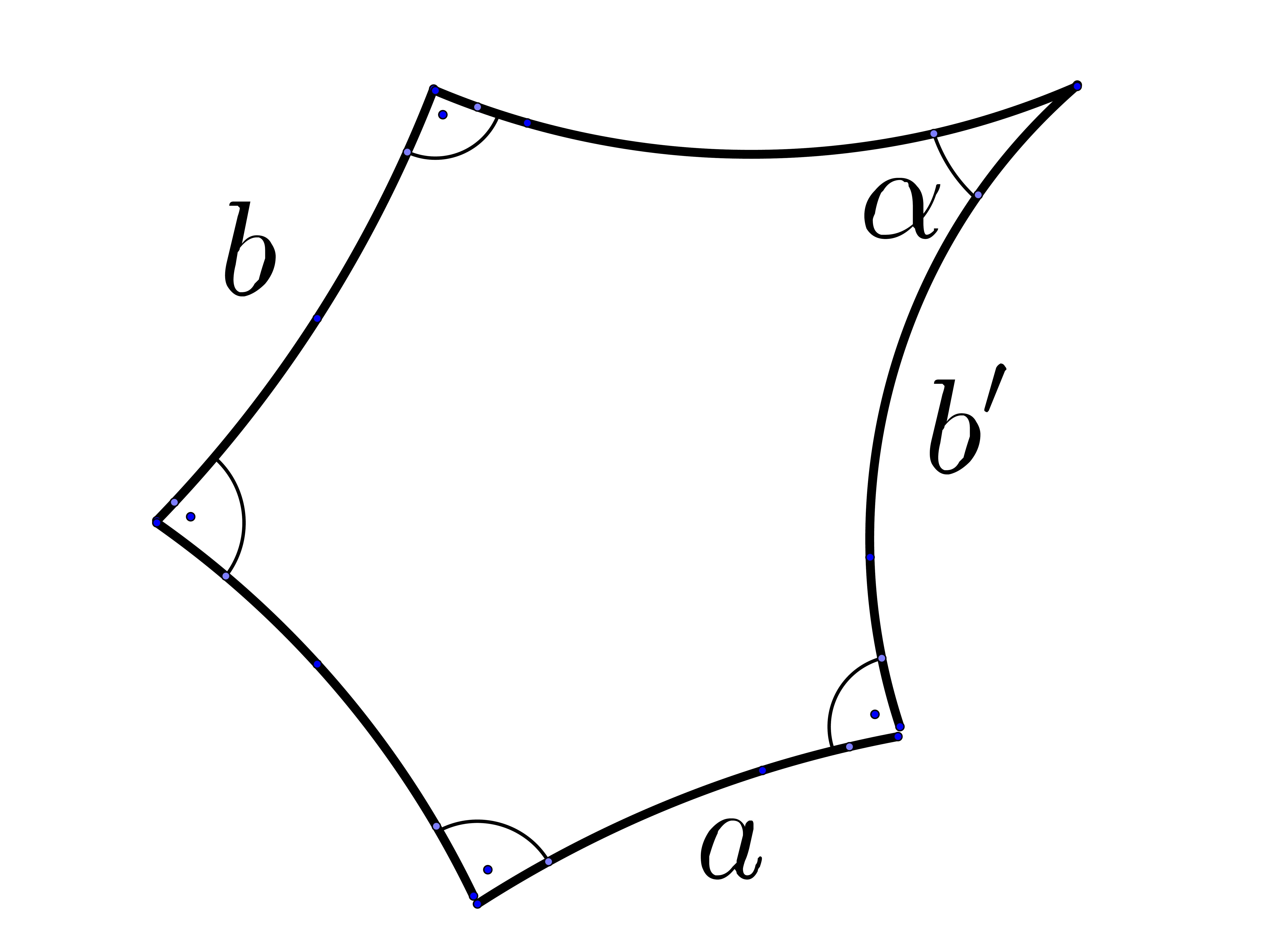}
    \label{fig:pentagon3}
    \end{minipage}
    }
            \subfigure[]
    {
    \begin{minipage}[tbp]{40mm}
      \includegraphics[width=40mm]{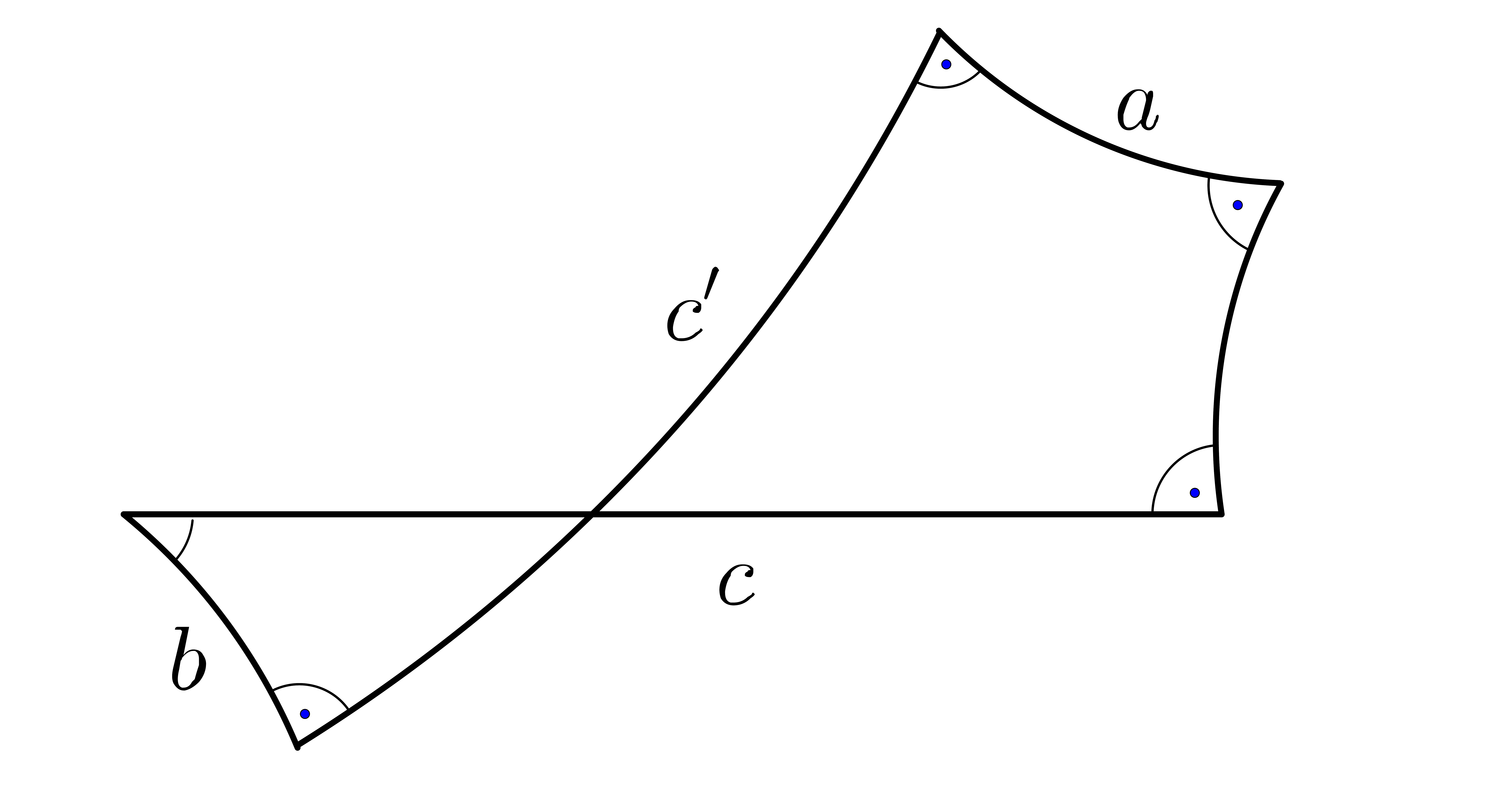}
      \label{fig:self-pentagon}
    \end{minipage}
    }
    \label{fig:ele-formulae}
    \caption{Hyperbolic polygons}
    \end{center}
  \end{figure}
  \begin{lemma}[\cite{Bu},\cite{Fen}]\label{lem:ele-formular}
   Elementary formulae of hyperbolic geometry:
  \begin{itemize}
    \item Tri-rectangle, see Fig.~\ref{fig:tri-rectangle},
          \begin{eqnarray}
              \cos \phi&=&\sinh a \sinh b\label{eq:tri-rec-0}.
          \end{eqnarray}
    \item Pentagon, see Fig.~\ref{fig:Pentagon},
          \begin{eqnarray}
            \cosh c& =& -\cosh \alpha \cosh \beta \cos \theta+\sinh \alpha\sinh\beta,\label{eq:Pen-1}\\
            \cos \theta&=&\sinh a \sinh b \cosh c-\cosh a \cosh b\label{eq:Pen-3}.
          \end{eqnarray}
    \item Hexagon, see Fig.~\ref{fig:Hexagon},
          \begin{eqnarray}
            \cosh c&=& \sinh a \sinh b \cosh \gamma-\cosh a \cosh b.\label{eq:Hex}
          \end{eqnarray}
    \item Quadrilaterals with two right angles, see ~\ref{fig:Quadri-dist},
        \begin{eqnarray}\label{eq:dist}
          \cosh d(M',N')= \cosh{\rho_1}\cosh{\rho_2}\cosh c -\sinh \rho_1 \sinh \rho_2,
        \end{eqnarray}
        where $\rho_1$ (resp.$\rho_2)$ represents oriented distance from $M$ to $M'$ (resp. from $N$ to $N'$). If $\rho_1\rho_2>0$, then
        \begin{equation}\label{eq:dist2}
          \cos \alpha =-\cos \beta \cosh c +\sin \beta \sinh c\sinh |\rho_2|.
        \end{equation}
        and
        \begin{equation}\label{eq:dist3}
          \cosh c =-\cos \alpha\cos \beta +\sin\alpha\sin \beta \cosh d(M',N').
        \end{equation}
    \item  Convex pentagon with four right angles, see Fig.~\ref{fig:pentagon3}
        \begin{eqnarray}
            \cosh b&=& \sin\alpha \sinh a \sinh b'-\cos\alpha \cosh a.\label{eq:pentagon3}
        \end{eqnarray}
    \item The self-intersecting pentagon with four right angles, see Fig.~\ref{fig:self-pentagon}
            \begin{eqnarray}
            \sinh c&=& \sinh\alpha \cosh b \cosh c'+\cosh a \sinh b.\label{eq:self-pentagon}
        \end{eqnarray}
  \end{itemize}

 \end{lemma}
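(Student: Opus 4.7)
The plan is to treat this lemma as a catalogue of classical identities which I would derive by reducing each polygon to right-angled building blocks and then invoking the basic trigonometric identities for a right-angled hyperbolic triangle. The fundamental toolkit consists of the identities for a right triangle with legs $a,b$, hypotenuse $c$, and opposite angles $A,B$: the hyperbolic Pythagorean relation $\cosh c=\cosh a\cosh b$, together with $\sin A=\sinh a/\sinh c$, $\cos A=\cosh a\sin B$, and $\tanh a=\tanh c\cos B$. All the formulas in the lemma are algebraic consequences of these after a well-chosen decomposition.

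First, for the tri-rectangle identity \eqref{eq:tri-rec-0}, I would draw the diagonal of the Lambert quadrilateral, apply the right-triangle identities to each of the two resulting right triangles, and eliminate the diagonal to obtain $\cos\phi=\sinh a\sinh b$. For the right-angled pentagon identities \eqref{eq:Pen-1}, \eqref{eq:Pen-3} and the right-angled hexagon identity \eqref{eq:Hex}, the standard recipe is to drop the common perpendicular between two non-adjacent sides; this splits the pentagon into two tri-rectangles and the hexagon into two right-angled pentagons, after which the tri-rectangle and pentagon formulas combine to yield the stated identities. The order in which I would carry these out is therefore: tri-rectangle first, then pentagon, then hexagon, each building on its predecessor.

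Next, for the quadrilateral identities \eqref{eq:dist}, \eqref{eq:dist2}, \eqref{eq:dist3} involving the distance $d(M',N')$, I would fix the common perpendicular of length $c$ between the two sides, mark off the signed distances $\rho_1,\rho_2$, and then apply the hyperbolic law of cosines in the triangle $M'N'$ plus foot, cutting by a perpendicular where needed. The signed-distance identity \eqref{eq:dist} drops out directly; for \eqref{eq:dist2} and \eqref{eq:dist3} one then reinterprets the same configuration viewed from the angle $\alpha$ or $\beta$, using that the angular relations at $M'$ and $N'$ are governed by the dual law of cosines. Finally, for the convex pentagon \eqref{eq:pentagon3} with four right angles I would again split along the geodesic joining the non-right vertex to the opposite side, producing two tri-rectangles and arriving at the identity after elimination; the self-intersecting pentagon identity \eqref{eq:self-pentagon} can then be obtained by analytic continuation of \eqref{eq:pentagon3}, reinterpreting one side's length as a negative oriented length, or equivalently by redoing the decomposition with the correct orientation at the crossing.

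The main obstacle I anticipate is bookkeeping rather than genuine mathematical content: keeping careful track of signs for the oriented distances $\rho_1,\rho_2$, choosing the correct convention at the self-intersection point so that the identity remains consistent with the pentagon case, and selecting the decomposition in each case so that the eliminated auxiliary edge can indeed be solved for in closed form. Since each identity is classical (see Buser \cite{Bu} and Fenchel \cite{Fen}), the plan is to present the derivations in a uniform notation and defer the routine trigonometric algebra to those references.
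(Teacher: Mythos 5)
The paper offers no proof of this lemma at all: it is stated purely as a citation of classical facts from Buser \cite{Bu} and Fenchel \cite{Fen}, so there is no argument of the author's to compare yours against. Your outline is the standard elementary route to these identities --- reduce everything to the right-triangle relations $\cosh c=\cosh a\cosh b$, $\sin A=\sinh a/\sinh c$, $\cos A=\cosh a\sin B$, and assemble the tri-rectangle, pentagon, hexagon and two-right-angle quadrilateral by splitting along a diagonal or a common perpendicular --- and each step you describe does go through; this is essentially how Fenchel proceeds, whereas Buser derives the same formulas more uniformly from a matrix identity in the hyperboloid model, so your route is, if anything, the more hands-on of the two classical treatments. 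The only place where your plan is thinner than it should be is the self-intersecting pentagon \eqref{eq:self-pentagon}: ``analytic continuation, reinterpreting one side's length as a negative oriented length'' is the right idea (it is exactly the Fenchel formalism of generalized polygons with signed/complex side lengths), but it is also precisely where the identity changes shape --- note that \eqref{eq:pentagon3} has the form $\cosh b=\sin\alpha\,\sinh a\sinh b'-\cos\alpha\cosh a$ while \eqref{eq:self-pentagon} has $\sinh$ and $\cosh$ swapped in several slots and a sign flipped --- so if you were writing this out you would need to track which side acquires the shift $x\mapsto x+i\pi/2$ (turning $\cosh$ into $i\sinh$ and vice versa) rather than merely negating a length. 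Since the lemma is explicitly a catalogue of known formulas, deferring the remaining algebra to \cite{Bu} and \cite{Fen} is consistent with what the paper itself does.
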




\section{The Fenchel-Nielsen coordinates}\label{sec:fn}
In this section, we  study the Fenchel-Nielsen coordinates for the Teichm\"uller space $\mathcal{T}_{g,n}$. First of all, we need to choose a \textit{coordinate system of curves} on $S_{g,n}$ which consists of the following date:
 \begin{itemize}
   \item a \textit{pants decomposition} which is  a set of non-peripheral, oriented simple closed curves $\Gamma=\{\gamma_1,...,\gamma_{3g-3+n}\}$ such that $[\gamma_i]\neq[\gamma_j]$ if $i\neq j$ and  $S\backslash \Gamma$ consists of pairs of pants $\mathcal{R}=\{R_i\},\ i=1,2,..., 2g-2+n$ (see Fig.~\ref{fig:pants-decomp});
   \item a set of \textit{seams} $\mathcal B=\{\beta_1,\cdots,\beta_k\}$ which is  a collection of disjoint non-peripheral simple closed curves or non-peripheral simple arcs such that the intersection of the union $\cup_{j=1}^{k} \beta_j$ with any pair of pants $R\in\{R_1,\cdots,R_{2g-2+n}\}$ determined by the pants decomposition $\Gamma$ is a union of three disjoint arcs connecting the boundary components of $R$ pairwise.
 \end{itemize}
 Let $R$ be a pair of pants with three oriented boundary  components $\gamma_1,\gamma_2,\gamma_3$, and $\delta$ be the geodesic perpendicular to both $\gamma_1$ and $\gamma_2$. Let $\beta$  be a simple arc on $R$  connecting $\gamma_1$ and $\gamma_2$. There exists a  homotopy $H$ between $\delta$ and $\beta$ which keeps the endpoints on the boundary of $R$. The \textit{twisting number} of $\beta$ at $\gamma_1$ is defined to be the signed displacement from $\delta\cap\gamma_1$ to $\beta\cap\gamma_1$ during the homotopy. The twisting number of $\beta$ at $\gamma_2$ is defined similarly.

  Now we define the \textit{twist parameter}  $t_i$ of $X\in\mathcal{T}_{g,n}$ along $\gamma_i\in\Gamma$. Let $\beta_j\in\mathcal B$ be one of the two seams crossing $\gamma_i$. On each side of the geodesic representative of $\gamma_i$ there is a pair of pants, and the geodesic representative of $\beta_j$ gives an geodesic arc on each of these two pairs of pants. Let $t_{il}$ and $t_{ir}$ be the twisting numbers of each of these arcs on the left and right side of the geodesic representative of $\gamma_i$, respectively. The \textit{twist parameter} of $X$ along $\gamma_i$ is defined to be:
  $$ t_i:=\frac{t_{il}-t_{ir}}{l_X(\gamma_i)}, $$
  where $l_X(\gamma_i)$ represents the length of the geodesic representative of $\gamma_i$ on $X$.

  \begin{remark}\label{remark:twist}
    (1) The sign of the twist parameter of $X$ along $\gamma_i$ does not depend on the orientation of $\gamma_i$. Instead, it depends on the orientation of the  surface.

    (2) The twist parameters depend on the choice of  seams $\mathcal B$. Let $\mathcal B'$ be another set of seams and $\{t_i\}$ be the corresponding twist parameters. Then $t_i-t_i'\in \mathbb Z/2$. Moreover, the difference $t_i-t_i'$ is independent of $X$, it depends only on the seams $\mathcal B$ and $\mathcal B'$.
  \end{remark}
\begin{figure}
  \includegraphics[width=10cm]{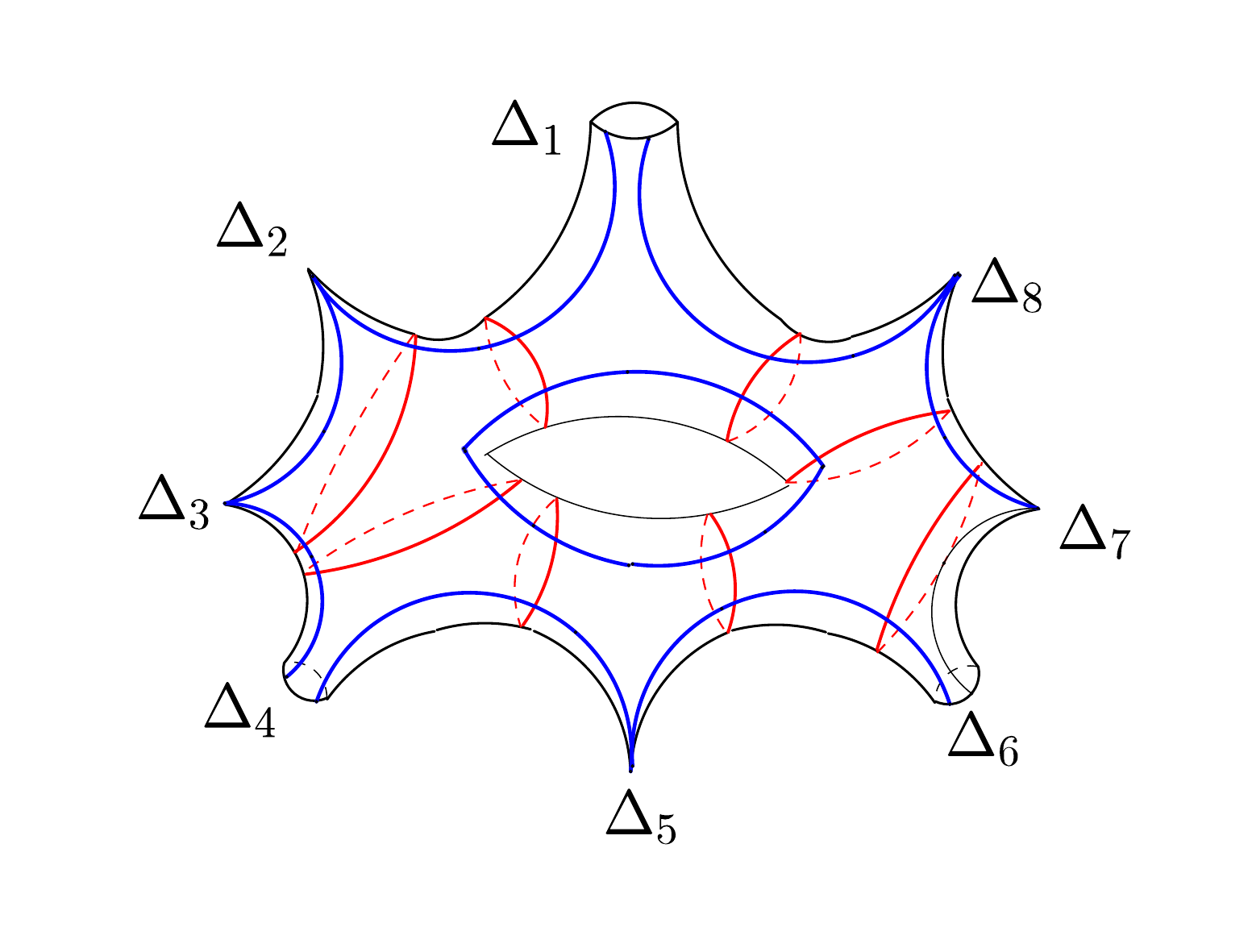}
  \caption{The hyperbolic cone surface $X$ in this figure has one genus and eight generalized boundary components: three geodesic boundaries ($\Delta_1,\Delta_4,\Delta_6$), four cone points ($\Delta_2,\Delta_3,\Delta_7,\Delta_8$) and one cusp ($\Delta_5$). The pants curves (the red circles) decompose $X$ into eight generalized Y-pieces. The set consisting of blue arcs and blue circle consist  a set of seams.}
   \label{fig:pants-decomp}
\end{figure}


Now we fix a \textit{coordinate system of curves} ($\Gamma,\mathcal B$)  and the corresponding pairs of pants $\mathcal{R}$ for $S_{g,n}$. We construct a marked hyperbolic cone surface for any given  $(6g-6+3n)$-tuple $(\Lambda,L,T)\in (-\pi,\infty)^n \times
\mathbb{R}_+^{3g-3+n}\times\mathbb{R}^{3g-3+n}$ as follows:
\begin{itemize}

\item 
$\Lambda=(\lambda_1,\lambda_2,...,\lambda_n)$ is the boundary assignments for the generalized boundary components $\Delta_1,...,\Delta_n$;
\item 
$L=(l([\gamma_1]),l([\gamma_2]),...,l([\gamma_{3g-3+n}])) $ is the  lengths of the pants curves \\
$[\gamma_1],...,[\gamma_{3g-3+n}]\in\Gamma$;
\item 
$T=(t_1,t_2,...,t_{3g-3+n})$ is the twist parameters along $\gamma_1,...,\gamma_{3g-3+n}\in\Gamma$.
\end{itemize}
 $\Lambda$ and $L$ determine the geometry of each pair of pants $R_i\in \mathcal{R}$ and $T$ determines how to paste them. For each $\gamma_i\in\Gamma$, there are two pairs of pants $R,\ R'\in\mathcal{R}$ ($R$ and $R'$ may be the same pair of pants) such that $\gamma_i$ serves as a common boundary component of them. Then $t_i\in T$ determines how $R$ and $R'$ are pasted (for more details about Fenchel-Nielsen coordinates, we refer to \cite[\S10.6]{FM}).

The construction above actually defines a map $\Phi_{\Gamma,\mathcal B}$ as below:
\begin{equation}\label{def:FN}
\begin{array}{rcll}
\Phi_{\Gamma,\mathcal B}:&(-\pi,\infty)^n \times
\mathbb{R}_+^{3g-3+n}\times\mathbb{R}^{3g-3+n}& \longrightarrow &\T\\
&(\Lambda,L,T)&\longmapsto & X.
\end{array}
\end{equation}

In the case that $S$ is a closed surface, $\Phi_{\Gamma}$ is a bijection (also homeomorphism). We will show, by Theorem~\ref{thm:mlss} in \S~\ref{sec:mlss}, that this is also true in our case.
The $6g-6+3n$-tuple $(\Lambda,L,T)$ is called the Fenchel-Nielsen coordinates of $X$ with respect to $(\Gamma,\mathcal B)$, and the map $\Phi_{\Gamma,\mathcal B}$ is called the  Fenchel-Nielsen parametrization of $\T$ with respect to $\Gamma$ and $\mathcal B$.

\begin{figure}

     \subfigure[]
   {
     \begin{minipage}[tbp]{50mm}
     \includegraphics[width=50mm]{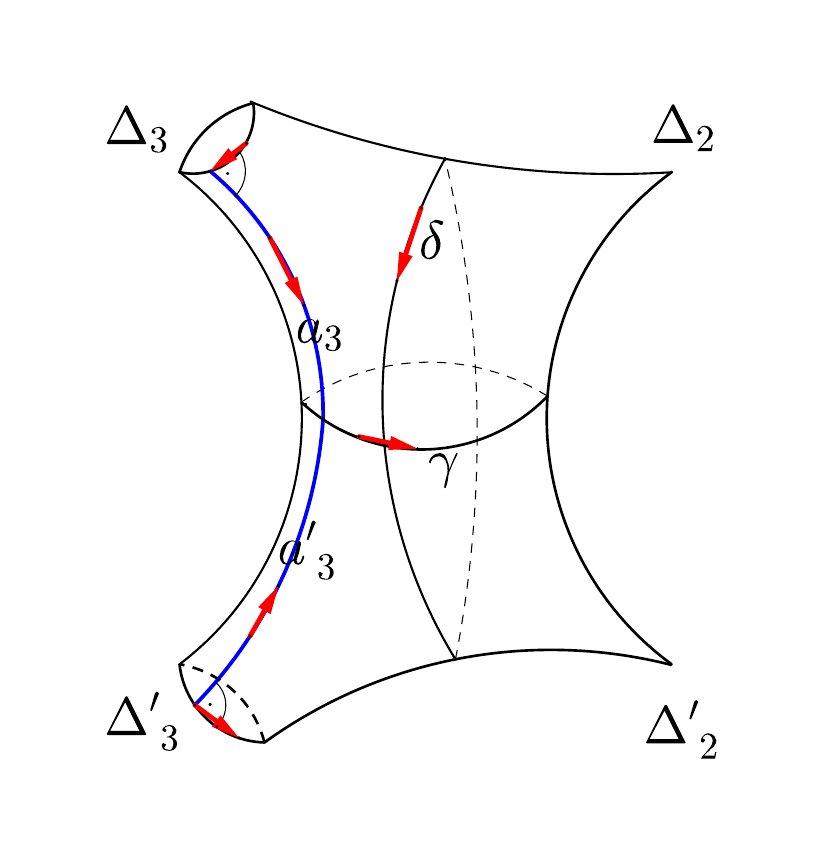}
    \end{minipage}
    \label{fig:V-V0}
   }
   \begin{minipage}[tbp]{15mm}
    \begin{tikzpicture}
      \draw [->](-0.5,0)--(0.5,0)
         node[pos=0.5,above] {twist $t$ along }
         node[pos=0.5,below] {$\gamma$};
    \end{tikzpicture}
    \end{minipage}
      \subfigure[]
   {
     \begin{minipage}[tbp]{50mm}
     \includegraphics[width=50mm]{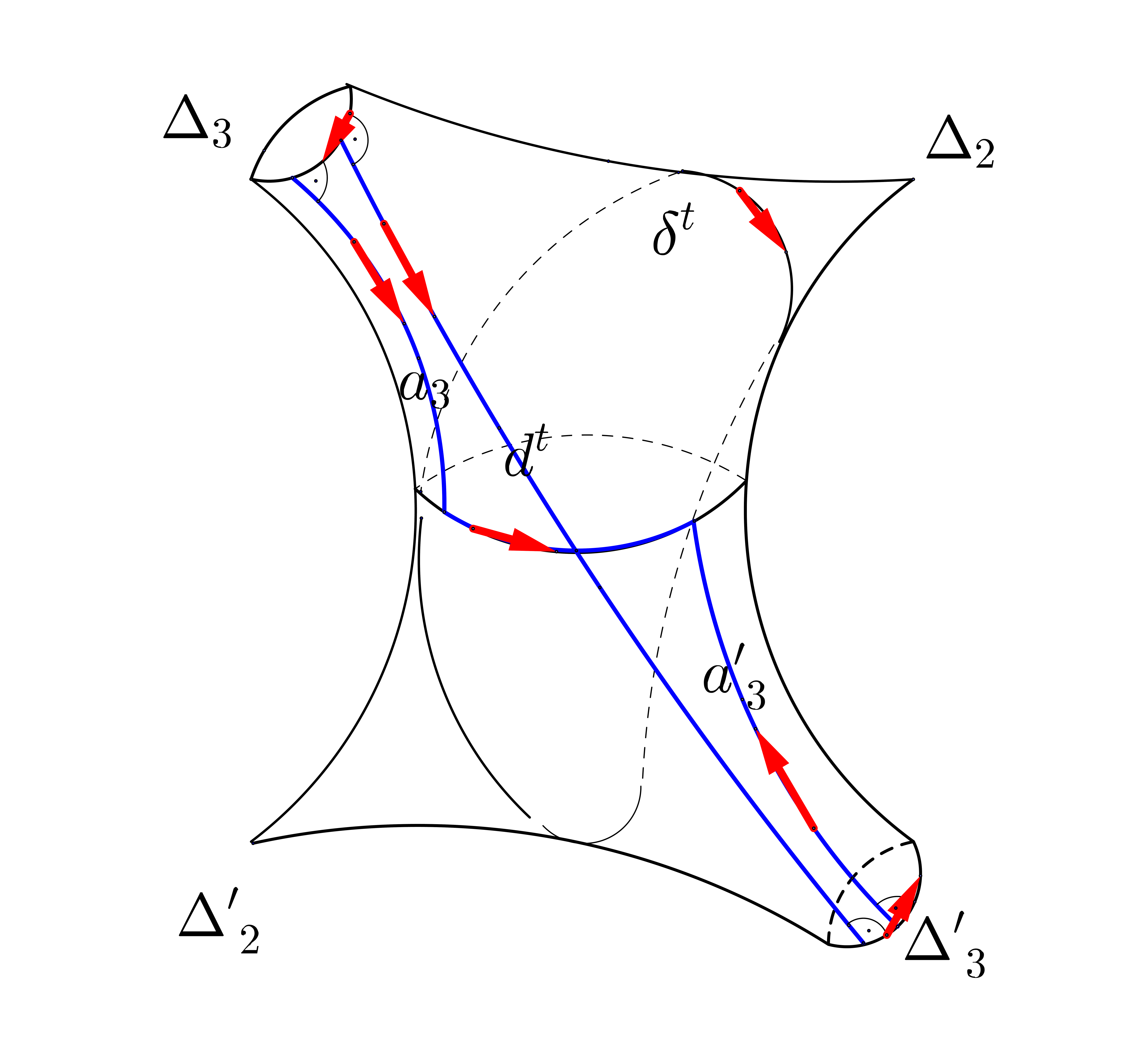}
    \end{minipage}
    \label{fig:V-Vt}
   }

  \caption{Constructions of generalized $\xp s$ by gluing two pairs of pants along two chosen boundaries and twisting to the left about length $t|\gamma|$. Orientations of curves: $X$ sits on the left of $\Delta_3$ and $\Delta_3'$, the cone points $\Delta_2,\Delta_2'$ sit on the left of $\delta$, $\Delta_2,\Delta3$ sit on the left of  $\gamma$, the orientation of $a_3$ (resp.  $a_3'$ ) is chosen from $\Delta_3$ (resp. $\Delta_3'$) to $\gamma$.}
 \label{fig:X-piece-1}
\end{figure}

\subsection{Boundary assignments and twist of a generalized X-piece}

First, we consider the boundary assignments of a generalized $\xp$.
\begin{lemma}[Boundary assignments]\label{lem:angsim}
  Let $X$ be a generalized  $\xp$ with generalized boundary components $ \Delta_2, \Delta'_2,\Delta_3, \Delta'_3$. If $ \Delta_3, \Delta'_3$ are geodesic boundaries with given lengths $\lambda_3,\lambda'_3$, then the boundary assignments of $\Delta'_2,\Delta_2$ are determined by $\lambda_3,\lambda'_3$ and $\mlss (X)$.
   Moreover, we need at most 28 fixed non-peripheral simple closed curves.

\end{lemma}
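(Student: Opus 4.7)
I argue in three stages.

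\emph{Stage 1 (recovering the waist length).} The waist $\gamma$ along which the two generalized Y-pieces constituting $X$ are glued is itself a non-peripheral simple closed curve on $X$, so $\lambda_\gamma := l_X([\gamma])$ is read off directly from $\mathcal{MLSS}(X)$. This consumes one fixed curve.

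\emph{Stage 2 (reduction to two scalar unknowns).} Cutting $X$ along $\gamma$ exhibits $X = Y \cup Y'$ as the union of two generalized Y-pieces with boundary data $(\lambda_2,\lambda_3,\lambda_\gamma)$ and $(\lambda_2',\lambda_3',\lambda_\gamma)$ respectively. The generalized right-angled hexagon, pentagon, and quadrilateral identities collected in Lemma~\ref{lem:ele-formular} --- applied case by case according to whether each boundary is geodesic, cusp, or cone --- imply that a generalized Y-piece is rigid once its triple of generalized boundary lengths is prescribed. Hence, with $\lambda_\gamma, \lambda_3, \lambda_3'$ now known, recovering $\lambda_2, \lambda_2'$ reduces to extracting two real scalars from $\mathcal{MLSS}(X)$. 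The gluing twist $t$ along $\gamma$ is an extra nuisance parameter that will enter all the length formulas and must subsequently be eliminated.

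\emph{Stage 3 (extracting the parameters via Dehn twists).} On the 4-holed topological sphere underlying $X$ there are exactly three isotopy classes of non-peripheral simple closed curves modulo Dehn twist about $\gamma$: the waist itself and two ``transverse'' families. I would fix representatives $\alpha, \beta$ from the two non-waist families, each crossing $\gamma$ transversely twice, and then consider the Dehn-twisted curves $\alpha_k := \tau_\gamma^k(\alpha)$ and $\beta_k := \tau_\gamma^k(\beta)$ for $k$ in a small fixed finite set of integers. Cutting $\alpha_k$ along $\gamma$ produces two arcs in $Y$ and $Y'$ whose lengths and orthogonal incidence data with $\gamma$ are computed from the perpendicular seams in $Y, Y'$ using Lemma~\ref{lem:ele-formular}; reassembling these arcs with gluing twist $t+k$ yields an explicit trigonometric formula $l_X(\alpha_k) = F_\alpha(\lambda_2,\lambda_2';t+k)$, and similarly for $\beta_k$. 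Symmetric combinations such as $l_X(\alpha_k) + l_X(\alpha_{-k})$ cancel the odd-in-twist contributions and yield relations purely in $\lambda_2, \lambda_2', \lambda_\gamma$, while antisymmetric combinations then pin down $t$. Carrying out this analysis across the combinatorial types YY, YV, YJ, VV, VJ, JJ of generalized X-piece, and counting curves in each case, gives the uniform bound of $28$ fixed curves.

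\emph{Main obstacle.} The hyperbolic identities of Lemma~\ref{lem:ele-formular} change form depending on whether each boundary of $Y$ or $Y'$ is a geodesic, a cusp, or a cone point, so the formula for $l_X(\alpha_k)$ splits into many sub-cases. Verifying invertibility of the resulting nonlinear systems uniformly across all combinatorial types, and ensuring the total curve count stays within the 28-curve budget, is the most technical part of the argument.
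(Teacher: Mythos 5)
There is a genuine gap in Stage 3, and it is precisely where the paper's key idea lives. First, a local error: for a curve $\alpha_k=\tau_\gamma^k(\alpha)$ the length formula has the form $\cosh\tfrac{|\alpha_k|}{2}=A\cosh\bigl((t+k)|\gamma|\bigr)+B$ (cf.\ \eqref{eq:len-delta}), so the symmetric combination in $k$ gives $2A\cosh(t|\gamma|)\cosh(k|\gamma|)+2B$; the twist is \emph{not} eliminated, since only the $\sinh(t|\gamma|)$ part is odd in $k$. This is repairable by differencing and taking ratios as in \eqref{eq:time}, which is what the paper does to pin down $t$. The unrepairable problem is that all your test curves are $\tau_\gamma$-orbits of curves transverse to the \emph{single} waist $\gamma$. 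From each such family you can only ever extract the two numbers $A$ and $B$ (the coefficient and the constant term), i.e.\ at most a handful of nonlinear constraints on the two unknowns $\lambda_2,\lambda_2'$, and you give no mechanism for inverting them; you explicitly defer invertibility to ``the most technical part,'' but that \emph{is} the lemma. (Also, your claim that the $4$-holed sphere has exactly three $\tau_\gamma$-orbits of non-peripheral simple closed curves is false, and the bound $28$ is asserted rather than derived.)

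The paper's resolution is to vary the waist itself: it replaces $\gamma$ by $\gamma_k=Tw_\delta^k\gamma$ for $k=0,\pm1,\pm2,\pm3$. Squaring the coefficient $A$ via \eqref{eq:aaa} turns each choice of waist into one \emph{linear} equation \eqref{eq:BC} in the four symmetric quantities $B_2+B_2'$, $C_2+C_2'+B_2B_2'$, $B_2C_2'+B_2'C_2$, $C_2C_2'$, with coefficient row $\bigl(\cosh^3\tfrac{|\gamma_k|}{2},\cosh^2\tfrac{|\gamma_k|}{2},\cosh\tfrac{|\gamma_k|}{2},1\bigr)$ up to normalization. Formula \eqref{eq:len-gamman} guarantees that at least four of the seven lengths $|\gamma_k|$ are pairwise distinct on one side of $k=0$, so a Vandermonde determinant is nonzero, the linear system is solvable, and $\lambda_2,\lambda_2'$ are recovered from $B_2+B_2'$ and $C_2C_2'$. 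The count $28=7\times4$ then comes from needing four twisted $\delta$-curves per waist $\gamma_k$ to run the \eqref{eq:time} argument. Without the idea of changing the pants curve to generate this linear system, your Stage 3 does not close.
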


\begin{proof}
A  generalized boundary can be a cone point, a cusp, or a closed geodesic. First, we consider the case that the $\Delta_2,\Delta'_2$ are cone points or cusps (see Fig.~\ref{fig:V-V0}). 

Let $\gamma$ be a non-peripheral simple closed curve of $\mathcal X$ and $\mathcal B=\{\beta_1,\beta_1',\beta_2,\beta_2'\}$ a collection of seams, where $\beta_1$ (resp. $\beta_1'$) connects $\Delta_2$ and $\Delta_3$ (resp. $\Delta_2'$ and $\Delta_3'$) and $\beta_2$ (resp. $\beta_2'$) connects $\Delta_3$ and $\Delta_3'$ (resp. $\Delta_2$ and $\Delta_2'$). Let $\delta$ be the simple closed curve homotopic to $\beta_2\cdot\gamma'_3\cdot(\beta_2)^{-1}\cdot\gamma_3$.  The waist $\gamma$ cuts $X$ into two pairs of pants with the third boundaries $\Delta_1,\Delta'_1$, respectively. Let $X_t$ be the generalized $\xp$ obtained by gluing these two pairs of pants along $\Delta_1,\Delta'_1$ with twist amount $t$ with respect to $\mathcal B$  (see  Fig.~\ref {fig:V-Vt}).

Denote by  $Tw^n_\gamma$  the $n$ times Dehn twist along $\gamma$. Let $\delta_n$ be  the simple closed curve obtained from $\delta$ by $n$ times Dehn twist along $\gamma$, i.e. $\delta_n=Tw^n_\gamma \delta$. Denote by $a_3$ (resp. $a'_3$ ) the geodesic perpendicular to both $\Delta_3$  and $\Delta_1$ (resp. $\Delta'_3$ and $\Delta'_1$).  All the directions (except for $\Delta_1,\Delta'_1$) are illustrated in Fig.~\ref{fig:X-piece-1}. Let $d^t$ and $\delta^t$ be the  geodesic representatives of $\beta_2$ and $\delta$ on $X_t$, respectively. 

With the notations above, we have the following formula which can be found in \cite[Prop.3.3.11]{Bu}
\begin{equation}\label{eq:len-delta}
\begin{array}{rcl}
\cosh{\half|\delta_n|}&=&\sinh(\half\lambda_3)\sinh(\half\lambda_3')\{\sinh {|a_3|}\sinh{|a_3'|}\cosh(t+n)|\gamma|\\
&&+\cosh{|a_3|}\cosh{|a_3'|}\}
-\cosh{(\half\lambda_3)}\cosh{(\half\lambda_3')},
\end{array}
\end{equation}
where $|\delta_n|, |a_3|,|a_3'|,|\gamma|$, $\lambda_3,\lambda_3'$ represent the lengths of the corresponding geodesic representatives.

Hence
\begin{equation}\label{eq:diff}
\cosh\frac{|\delta_{1}|}{2}-\cosh\frac{|\delta|}{2}=
\sinh\frac{\lambda_3}{2}\sinh\frac{\lambda'_3}{2}\sinh {|a_3|}\sinh{|a_3'|}(\cosh(t|\gamma|+|\gamma|)-\cosh(t|\gamma|)),
\end{equation}
\noindent \textup{and}
\begin{equation}\label{eq:time}
\frac{\cosh\frac{|\delta_2|}{2}-\cosh\frac{|\delta_1|}{2}}
{\cosh\frac{|\delta_1|}{2}-\cosh\frac{|\delta|}{2}}=
\frac{\cosh(t|\gamma|+2|\gamma|)-\cosh(t|\gamma|+|\gamma|)}
{\cosh(t|\gamma|+|\gamma|)-\cosh(t|\gamma|)}.
\end{equation}
From  this we know that $t$  is determined by $|\delta|,\ |\delta_1|,|\ \delta_2|$ and $|\gamma|$.

Squaring (\ref{eq:Pen-3}) and rearranging, we have
\begin{eqnarray}\label{eq:aaa}
  \sinh^2\frac{\lambda_3}{2}\sinh^2 {|a_3|}
  &=& \sinh^{-2}\frac{|\gamma|}{2}[\cosh^2 \frac{|\gamma|}{2}+2\cos\frac{\lambda_2}{2}\nonumber
  \cosh\frac{\lambda_3}{2}\cosh\frac{|\gamma|}{2}\\
  &&\qquad +
  \cos^2\frac{\lambda_2}{2}+\cosh^2\frac{\lambda_3}{2}-1],
\end{eqnarray}
where $-\lambda_2$ is the cone angle at the cone point $\Delta_2$.

Set
\begin{eqnarray*}
  B_2\triangleq2\cos\frac{\lambda_2}{2}
  \cosh\frac{\lambda_3}{2},&&  C_2\triangleq\cos^2\frac{\lambda_2}{2}+\cosh^2\frac{\lambda_3}{2}-1;\\
    B'_2\triangleq2\cos\frac{\lambda'_2}{2}
  \cosh\frac{\lambda'_3}{2},&&  C'_2\triangleq\cos^2\frac{\lambda'_2}{2}+\cosh^2\frac{\lambda'_3}{2}-1.
\end{eqnarray*}Then
\begin{eqnarray}\label{eq:BC}
 \nonumber &&\frac{(\cosh\frac{|\delta_{1}|}{2}-\cosh\frac{|\delta|}{2})^2 \sinh^{4}\frac{|\gamma|}{2}}
  {[\cosh(t|\gamma|+|\gamma|)-\cosh(t|\gamma|)]^2}\\
  &=& \cosh^{4}\frac{|\gamma|}{2}+(B_2+B_2')\cosh^3\frac{|\gamma|}{2}
  +(C_2+C_2'+B_2B_2')\cosh^{2}\frac{|\gamma|}{2}\\
\nonumber  &&+(B_2C_2'+B_2'C_2)\cosh\frac{|\gamma|}{2}+C_2C_2'.
\end{eqnarray}
Equation (\ref{eq:BC}) is a linear equation about parameters $B_2+B_2'$, $C_2+C_2'+B_2B_2',$
 $B_2C_2'+B_2'C_2$ and $C_2C_2'$.

 Now, changing the  pants decomposition curve from $\gamma$ to $\gamma_k=Tw^k_\delta \gamma$, $k=\pm1,$ $\pm2,$ $\pm3$,  where  $Tw^n_\delta$  represents the $n$ times Dehn twist along $\delta$,
we get six more  linear equations about parameters $B_2+B_2'$, $C_2+C_2'+B_2B_2'$
, $B_2C_2'+B_2'C_2$ and $C_2C_2'.$

Combining  (\ref{eq:pentagon3}) and (\ref{eq:self-pentagon}), we have the following formula
\begin{equation}\label{eq:len-gamman}
\begin{array}{rcl}
\cosh{\half|\gamma_k|}&=&\sinh(\half\lambda_3)\sin(\half\lambda_2)\{\sinh {|b_3|}\cosh{|b_2|}\cosh(\tilde{t}+k)|\delta|\\
&&+\cosh{|b_3|}\sinh{|b_2|}\}
-\cos{(\half\lambda_2)}\cosh{(\half\lambda_3)},
\end{array}
\end{equation}
where $b_3$ (resp. $b_2$) is the length of the geodesic perpendicular to both $\Delta_3$ and $\delta$ (resp. $\Delta_2$ and $\delta$), $\tilde t$ represents the twist of  $X$ along $\delta$.

It follows from (\ref{eq:len-gamman}) that if $\tilde t<0$, then $|\gamma_{-3}|$, $|\gamma_{-2}|$,$|\gamma_{-1}|$ and $|\gamma_{0}|$ are pairwise different,  if $\tilde t\geq0$, then $|\gamma_{3}|$, $|\gamma_{2}|$,$|\gamma_{1}|$ and $|\gamma_{0}|$ are pairwise different. Hence, at least one of the following two matrices has non-zero determinant,
\begin{equation*}
   \begin{pmatrix}
    \cosh^3\frac{\gamma_{3}}{2}
   & \cosh^2\frac{\gamma_{3}}{2}
   &\cosh\frac{\gamma_{3}}{2}
   &1 \\
   \cosh^3\frac{\gamma_{2}}{2}
   & \cosh^2\frac{\gamma_{2}}{2}
   &\cosh\frac{\gamma_{2}}{2}
   &1 \\
   \cosh^3\frac{\gamma_{1}}{2}
   & \cosh^2\frac{\gamma_{1}}{2}
   &\cosh\frac{\gamma_{1}}{2}
   &1 \\
   \cosh^3\frac{\gamma_{0}}{2}
   & \cosh^2\frac{\gamma_{0}}{2}
   &\cosh\frac{\gamma_{0}}{2}
   &1  \end{pmatrix},\quad
  \begin{pmatrix}
       \cosh^3\frac{\gamma_{-3}}{2}
   & \cosh^2\frac{\gamma_{-3}}{2}
   &\cosh\frac{\gamma_{-3}}{2}
   &1 \\
   \cosh^3\frac{\gamma_{-2}}{2}
   & \cosh^2\frac{\gamma_{-2}}{2}
   &\cosh\frac{\gamma_{-2}}{2}
   &1 \\
   \cosh^3\frac{\gamma_{-1}}{2}
   & \cosh^2\frac{\gamma_{-1}}{2}
   &\cosh\frac{\gamma_{-1}}{2}
   &1 \\
   \cosh^3\frac{\gamma_{0}}{2}
   & \cosh^2\frac{\gamma_{0}}{2}
   &\cosh\frac{\gamma_{0}}{2}
   &1  \end{pmatrix}.
\end{equation*}
It follows that the there is a unique solution for the 4-tuple $(B_2+B_2' C_2+C_2'+B_2B_2'
, B_2C_2'+B_2'C_2, C_2C_2')$.
Further, the boundary assignments $\lambda_2, \lambda'_2$ can be uniquely obtained from $B_2+B_2'$ and
$C_2C_2'$.

If $\delta_2$ (resp. $\delta_2'$) is a geodesic, we need to find the corresponding equations similar to (\ref{eq:time}) and (\ref{eq:aaa}). Repeating the  calculations above using (\ref{eq:pentagon3}) and (\ref{eq:self-pentagon}), we find that the equation corresponding to (\ref{eq:time}) is exactly the same as (\ref{eq:time}), while the equation corresponding to (\ref{eq:aaa}) is a little bit different from (\ref{eq:aaa}) that
 $\cos \frac{\lambda_2}{2}$  is replaced by $\cosh \frac{\lambda_2}{2}$.

It  follows  from  equation (\ref{eq:time}) that, for each one of the seven curves $\{Tw_\delta^i\gamma: i = 0,\pm1, \pm2,\pm3\}$, four curves (including $Tw_\delta^i $ itself) are involved to determine the twist parameter. Therefore, we need at most 28 curves.

\end{proof}

\vskip 10pt
The proof of Lemma \ref{lem:angsim} also proves that the twist amount of a generalized $\xp$ with respect to a waist and a collection of seams is determined by its marked length spectrum.
\begin{lemma}[Twist]\label{lem:twist}
Let $X$ be a generalized $\xp$ with generalized boundary components $\Delta_2,\Delta'_2, \Delta_3,\Delta'_3$, and  with  boundary assignments $\Lambda=(\lambda_2,\lambda'_2,\lambda_3,\lambda_3')$. Let $(\Gamma,\mathcal B)$ be a coordinate system of curves of $ X$.
Then the twist parameter $t$  is  determined by $\Lambda$ and $\mlss (X)$. Moreover, we need at most $4$ non-peripheral simple closed curves.
\end{lemma}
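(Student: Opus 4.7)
The plan is to reuse the coordinate system, notation, and formulas developed in the proof of Lemma~\ref{lem:angsim}. Retain the waist $\gamma$ of the generalized $\mathcal X$-piece as the sole pants curve, the seam collection $\mathcal B = \{\beta_1,\beta_1',\beta_2,\beta_2'\}$, and the simple closed curve $\delta$ homotopic to $\beta_2 \cdot \gamma_3' \cdot \beta_2^{-1} \cdot \gamma_3$, together with its iterated Dehn twists $\delta_n := Tw_\gamma^n \delta$ for $n = 1, 2$. The aim is to show that $t$ is recoverable from just the four lengths $|\gamma|, |\delta|, |\delta_1|, |\delta_2|$ once $\Lambda$ is prescribed.

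The central simplification, compared with the proof of Lemma~\ref{lem:angsim}, is that the boundary assignment $\Lambda = (\lambda_2, \lambda_2', \lambda_3, \lambda_3')$ is now input data. Hence, using the pentagon formula~(\ref{eq:Pen-3}), the orthogonal distances $|a_3|$ and $|a_3'|$ appearing in formula~(\ref{eq:len-delta}) are immediately determined by $\Lambda$ together with $|\gamma|$. Equation~(\ref{eq:len-delta}) then becomes a completely explicit affine relation expressing $\cosh(|\delta_n|/2)$ as a known linear function of $\cosh\bigl((t+n)|\gamma|\bigr)$. Forming the ratio~(\ref{eq:time}) with $n = 0, 1, 2$ and simplifying the right-hand side via sum-to-product identities yields
\[
\frac{\cosh\tfrac{|\delta_2|}{2}-\cosh\tfrac{|\delta_1|}{2}}{\cosh\tfrac{|\delta_1|}{2}-\cosh\tfrac{|\delta|}{2}} \;=\; \frac{\sinh\bigl(t|\gamma| + \tfrac{3}{2}|\gamma|\bigr)}{\sinh\bigl(t|\gamma| + \tfrac{1}{2}|\gamma|\bigr)}.
\]
Since the left-hand side is read off from $\mlss(X)$ and $|\gamma|$, inverting this relation in $t|\gamma|$ recovers $t$.

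The one delicate point is verifying that the right-hand side is a globally injective function of $x := t|\gamma|$, so that $t$ is pinned down unambiguously rather than only up to a branch choice. A short calculation shows that $x \mapsto \sinh(x+\tfrac{3}{2}|\gamma|)/\sinh(x+\tfrac{1}{2}|\gamma|)$ has derivative $-\sinh|\gamma|/\sinh^2(x+\tfrac{1}{2}|\gamma|)$, hence is strictly decreasing on each of the two intervals $(-\infty, -|\gamma|/2)$ and $(-|\gamma|/2, +\infty)$; moreover the two branches have disjoint ranges $(-\infty, e^{|\gamma|})$ and $(e^{|\gamma|}, +\infty)$, so the function is injective on its full domain. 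With injectivity in hand, the curve count is immediate: the four non-peripheral simple closed curves $\gamma, \delta, \delta_1, \delta_2$ suffice, matching the bound in the statement.
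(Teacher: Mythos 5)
Your proposal is correct and takes essentially the same route as the paper, whose entire proof is the single line ``It follows immediately from (\ref{eq:time})''; your sum-to-product simplification and the monotonicity computation merely make explicit the injectivity in $t$ that the paper asserts without detail. (One small slip: the branch on $(-\infty,-|\gamma|/2)$ has range $(-\infty,e^{-|\gamma|})$ rather than $(-\infty,e^{|\gamma|})$, but the two ranges are disjoint either way, so the conclusion is unaffected.)
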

\begin{proof}
  It follows immediately from  (\ref{eq:time}).

\end{proof}

\section{Marked length spectral rigidity over $\mathcal{T}_{g,n}$}\label{sec:rigidity}
\subsection{Marked length spectral rigidity about simple closed curves}\label{sec:mlss}
In this section we investigate the relationship between the geometry of a hyperbolic cone surface and its marked length spectrum.
\begin{theorem}\label{thm:mlss}
Let $S$ be a non-exceptionl surface. $\Sim$ is rigid over $\T$. More precisely, let $X_1,X_2\in \T$, if $l_{X_1}([\alpha])=l_{X_2}([\alpha])$ for any $ [\alpha]\in\Sim$, then $X_1=X_2$.
\end{theorem}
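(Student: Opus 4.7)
The plan is to show that $X_1$ and $X_2$ have identical Fenchel--Nielsen coordinates with respect to a chosen coordinate system of curves $(\Gamma,\mathcal B)$, after which injectivity of the construction in \eqref{def:FN} finishes the argument. Write $(\Lambda^{(i)},L^{(i)},T^{(i)})$ for the Fenchel--Nielsen data of $X_i$. The lengths $L^{(i)}=(l_{X_i}([\gamma_1]),\ldots,l_{X_i}([\gamma_{3g-3+n}]))$ are read off directly from the marked length spectrum, since each pants curve $\gamma_j$ represents a class in $\Sim$; so $L^{(1)}=L^{(2)}$.

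To recover the boundary assignment $\lambda_k$ of a generalized boundary component $\Delta_k$, locate the pair of pants $R\in\mathcal R$ having $\Delta_k$ as one of its boundaries, and pick an adjacent pair of pants $R'\in\mathcal R$ glued to $R$ along some pants curve $\gamma_j$. The union $R\cup R'$ is a generalized $\xp$ whose waist is $\gamma_j$ and whose four peripheral components include $\Delta_k$. Applying Lemma~\ref{lem:angsim} to this sub-X-piece, the boundary assignments of its four peripheral components (hence in particular $\lambda_k$) are determined by the lengths of at most $28$ explicit non-peripheral simple closed curves on $R\cup R'$. These are also non-peripheral simple closed curves of $S$, so their lengths agree on $X_1$ and $X_2$; hence $\lambda_k^{(1)}=\lambda_k^{(2)}$. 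Iterating over all $k$ gives $\Lambda^{(1)}=\Lambda^{(2)}$.

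With $\Lambda$ and $L$ now known, the twist parameter $t_j$ along each pants curve $\gamma_j$ is extracted the same way: form the generalized $\xp$ which is the regular neighbourhood of $\gamma_j$ obtained by gluing the two pairs of pants of $\mathcal R$ adjacent to $\gamma_j$, and apply Lemma~\ref{lem:twist}, which expresses $t_j$ in terms of $\Lambda$, $L$, and the lengths of at most $4$ additional non-peripheral simple closed curves. These lengths again agree on $X_1$ and $X_2$, giving $t_j^{(1)}=t_j^{(2)}$. Hence $(\Lambda^{(1)},L^{(1)},T^{(1)})=(\Lambda^{(2)},L^{(2)},T^{(2)})$ and $X_1=X_2$.

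The main obstacle is verifying that the sub-X-pieces used above are actually embedded and that all auxiliary curves invoked by Lemma~\ref{lem:angsim} and Lemma~\ref{lem:twist} --- most importantly the Dehn-twisted curves $Tw^k_\delta\gamma$ and their analogues --- remain non-peripheral simple closed curves of the ambient surface $S$, rather than degenerating to peripheral curves or being supported in a handle where the same pair of pants appears on both sides of the waist. The hypothesis that $S$ is non-exceptional ($g\geq 1,n\geq 1$ or $g=0,n\geq 6$) is precisely what allows such a neighbourhood of each $\gamma_j$ and each $\Delta_k$ to be found with enough room for the $28$ curves of Lemma~\ref{lem:angsim} to all be non-peripheral in $S$; a careful case analysis of the possible adjacency patterns of pairs of pants will be needed to make this rigorous.
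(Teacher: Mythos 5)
Your overall strategy --- read off the pants lengths from the spectrum, recover the boundary assignments by applying Lemma~\ref{lem:angsim} to embedded generalized $\mathcal X$-pieces, and recover the twists from Lemma~\ref{lem:twist} --- is exactly the paper's argument for its Case~2. But there is a genuine gap: the once-holed torus $S_{1,1}$, which is non-exceptional under the paper's convention ($g\geq 1$, $n\geq 1$) and hence covered by the theorem, admits \emph{no} generalized $\mathcal X$-piece at all. Any pants decomposition of $S_{1,1}$ consists of a single curve $\gamma$, and cutting along it yields one pair of pants glued to itself along two of its boundary components; the union $R\cup R'$ you propose is the whole surface, which is a one-holed torus rather than a four-holed sphere, so Lemma~\ref{lem:angsim} simply does not apply. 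You correctly identify this as ``the main obstacle'' but leave it unresolved, and it cannot be resolved by a more careful choice of subsurface: the paper must (and does) treat this case by a separate direct computation, deriving the cone angle from equation \eqref{eq:Pen-3} applied to the cut-open pair of pants and the twist from the explicit length formula \eqref{eq:torus-length} for the Dehn-twisted curves $\beta_n$, together with the ratio identity \eqref{eq:time2}. Without some such substitute argument your proof does not cover $S_{1,1}$.

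A secondary issue is that even away from $S_{1,1}$, taking $R\cup R'$ for the two pairs of pants of the \emph{fixed} decomposition $\mathcal R$ adjacent along $\gamma_j$ need not produce an $\mathcal X$-piece to which Lemma~\ref{lem:angsim} applies: that lemma requires one boundary component on \emph{each} side of the waist ($\Delta_3$ and $\Delta_3'$) to be a geodesic of known length, i.e.\ a non-peripheral simple closed curve of $S$. If the pair of pants containing $\Delta_k$ has a second boundary component of $S$ as its third boundary (as happens for natural decompositions of, say, $S_{0,6}$), then that side of the waist carries two unknown assignments and the linear system in the proof of Lemma~\ref{lem:angsim} is not the one you need. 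The paper avoids this by choosing, for each $\Delta_i$, an $\mathcal X$-piece \emph{independently} of the fixed pants decomposition, subject only to the condition that at least two of its boundary components are non-peripheral simple closed curves of $S$; you should do the same rather than tying the choice to $\mathcal R$.
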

\begin{figure}

    \subfigure[]
    {
     \begin{minipage}[tbp]{50mm}
       \includegraphics[width=50mm]{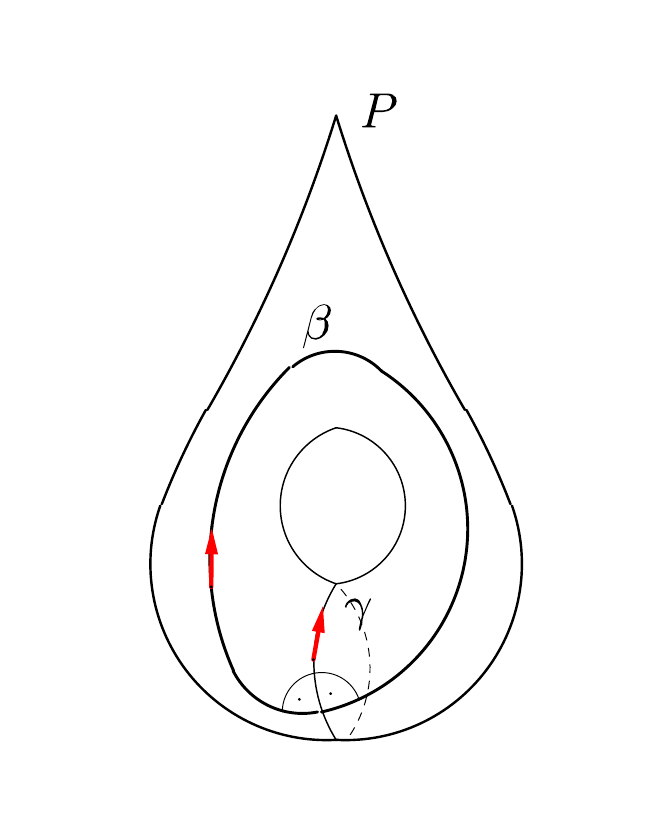}
     \end{minipage}
     \label{fig:turos-cone0}
    }
       \begin{minipage}[tbp]{15mm}
    \begin{tikzpicture}
      \draw [->](-0.5,0)--(0.5,0)
         node[pos=0.5,above] {twist $t$ along }
         node[pos=0.5,below] {$\gamma$};
    \end{tikzpicture}
    \end{minipage}
        \subfigure[]
    {
     \begin{minipage}[tbp]{50mm}
       \includegraphics[width=50mm]{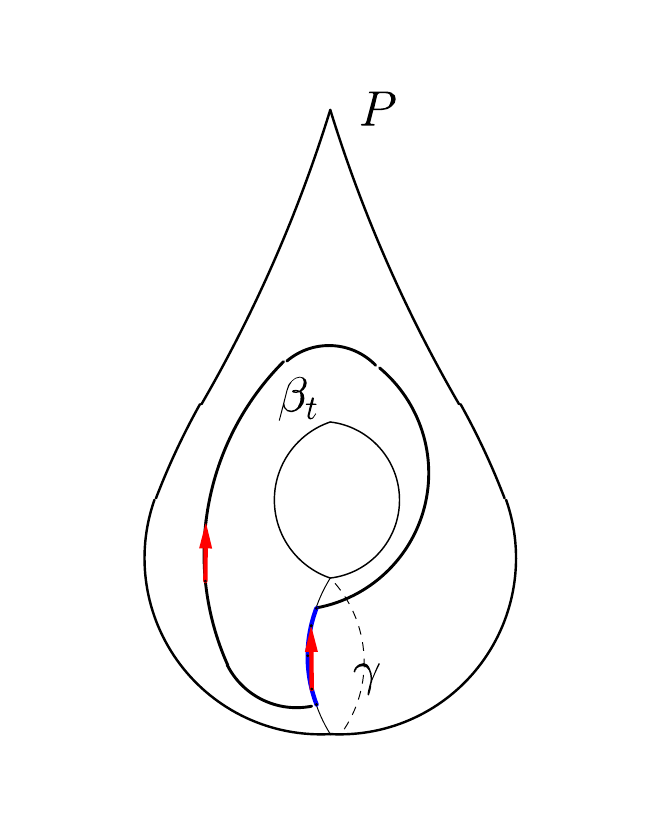}
     \end{minipage}
     \label{fig:turos-conet}
    }
    \label{fig:d&turos}
    \caption{ Figure (a) is a torus with one cone point and with no twist along $\gamma$ with respect to $\beta$, figure (b) is a torus with one cone point and with twist $t$ along $\gamma$ with respect to $\beta$.}
\end{figure}
\begin{proof}
We distinguish two cases.

\vskip 10pt
\textbf{Case } 1.
$S$ is a torus with only one generalized boundary component.
 Let $X$ be such a hyperbolic cone surface  (see Fig.~\ref{fig:turos-cone0} and Fig.~\ref{fig:turos-conet} for the case that the generalized boundary is a cone point). We prove for the case where the generalized boundary is a cone point. The other cases are similar.  First, we choose a non-peripheral simple closed curve $\gamma$ and a collection of seams $\mathcal B=\{\beta,\beta'\}$ where $\beta$ is a non-peripheral simple closed curve which intersects $\gamma$ only once  (see Fig.~\ref{fig:turos-cone0}) and $\beta'$ is a simple arc with endpoints on the generalized boundary component. Then the curve system $(\{\gamma\},\mathcal B)$ defines a Fenchel-Nielsen coordinates 
 of $X$. Let $X_t$ be the hyperbolic cone surface whose Fenchel-Nielsen coordinates are $(\lambda(X),l_X(\gamma),t)$, where $\lambda(X)$ is the cone angle of $X$ and $l_X(\gamma)$ is the length of the geodesic representative of $\gamma$ on $X$.

 To prove the theorem, we need to show that the cone angle and the twist are determined by the lengths of non-peripheral curves. Let $\beta_n$ be the curve obtained from $\beta$ by $n$ times Dehn twist along $\gamma$. Then the twist of $X$ along $\gamma$ with respect to $\beta_n$ is $t+n$. Let $t_{n,l}$ and $t_{n,r}$ be the twist numbers of (geodesic representative of) $\beta_n$ on the left and right side of (the geodesic representative of ) $\gamma$.
 It follows from the definition of twist parameter that $t_{n,l}-t_{n,r}=(t+n)l_X(\gamma)$.

 Cutting $X$ along $\gamma$, we get a pair of pants $R$ where $\gamma$ corresponds to two geodesic boundary components. By the definition of twist parameter, the length $l_{X_0}(\beta)$ of $\beta$ on $X_0$ coincides with the shortest distance between these two geodesic boundary components. As a consequence, the cone angle $\lambda(X)$ of $X$ is determined by $l_{X_0}(\beta)$ and  $l_X(\gamma)$. More precisely,  it follows from  (\ref{eq:Pen-3}) that
  \begin{equation}\label{eq:angle}
    \cos\frac{\lambda(X)}{2}=\sinh^2 \frac{l_X(\gamma)}{2} \cosh l_{X_0}(\beta)-\cosh^2 \frac{l_X(\gamma)}{2}.
  \end{equation}
 Combining the definition of twist parameter and Equation (\ref{eq:dist}), the length $l_X(\beta_n)$ of $\beta_n$ on $X$ is
 $$  l_X(\beta_n)=\cosh^{-1}( \cosh{t_{n,l}}\cosh{t_{n,r}}\cosh l_{X_0}(\beta) -\sinh t_{n,l} \sinh t_{n,r} ).$$
 Hence
 \begin{equation*}
 \begin{array}{cl}
   &\cosh {l_X(\beta_n)}\\
   =&\inf_{s\in \mathbb R} \cosh{s}\cosh{[(s-t-n)l_X(\gamma)]}\cosh l_{X_0}(\beta) -\sinh s \sinh [(s-t-n)l_X)\gamma)] \\
   =&2\cosh\frac{(t+n)l_X(\gamma)}{2}\cosh \frac{l_{X_0}(\beta)}{2}-1.
 \end{array}
 \end{equation*}
 Therefore,
\begin{equation}\label{eq:torus-length}
   \cosh\frac{l_X(\beta_n)}{2}=\cosh\frac{(t+n)l_X(\gamma)}{2}\cosh \frac{l_{X_0}(\beta)}{2}.
\end{equation}
Then
\begin{equation}\label{eq:time2}
\frac{\cosh\frac{l_X(\beta_2)}{2}-\cosh\frac{l_X(\beta_1)}{2}}
{\cosh \frac{l_X(\beta_1)}{2}-\cosh \frac{l_X(\beta)}{2}}=
\frac{\sinh\frac{(2t+3)l_(\gamma)}{4}}
{\sinh\frac{(2t+1)l_X(\gamma)}{4}}.
\end{equation}
From (\ref{eq:angle}), (\ref{eq:torus-length}) and (\ref{eq:time2}),
 the twist parameter $t$ and   the cone angle $\lambda(X)$  can be determined uniquely  by $l_X(\beta_2)$, $l_X(\beta_1)$ $l_X(\beta)$, $l_X(\gamma)$.

\vskip 10pt
\textbf{Case } 2. $S$ is not a torus with only one generalized boundary component.
 In this case, for each generalized boundary component $\Delta_i$, we can find at least one generalized $\xp$ $\mathcal{X}_i$  such that at least two of the generalized boundary components of $\mathcal{X}_i$ are non-peripheral simple closed curves of $S$.
 It follows from Lemma \ref{lem:angsim} that the boundary assignments $\Lambda=(\lambda_1,\lambda_2,...,\lambda_n)$ of $X_1,X_2$ are the same.

Next, we fix a pants decomposition $\Gamma=\{\gamma_i\}_{i=1}^{3g-3+n}$ and  a collection of seams $\mathcal B=\{\beta_1,\cdots,\beta_k\}$.
To prove the theorem, it suffices to show that if $l_{X_1}([\alpha])=l_{X_2}([\alpha])$ for every $ [\alpha]\in\Sim$ then $X_1$ and $X_2$ have the same Fenchel-Nielsen coordinates with respect to $(\Gamma,\mathcal B)$. The pants lengths $L=(l(\gamma_1),l(\gamma_2),...,l(\gamma_{3g-3+n}))$ can be uniquely determined,  and the boundary assignments $\Lambda=(\lambda_1,\lambda_2,...,\lambda_n)$   can also be uniquely determined as above. Finally, by Lemma ~\ref{lem:twist}, the twist parameters $\tw=(t_1,t_2,...,t_{3g-3+n})$ can also be uniquely determined. This completes the proof.

\end{proof}

In fact, the set $\Sim$ is unnecessarily large for the rigidity of $\T$ in the case where $S$ is a non-exceptional surface. From the proof of Lemma \ref{lem:twist} and Lemma~\ref{lem:angsim}, we know that  at most $28n+4(3g-3+n)$ non-peripheral simple closed curves are involved. We summarize this as following:
\begin{theorem}\label{thm:frigidity}
There is a subset $\mathcal{S}'(S)\subset \Sim$ consisting of at most $12g-12+32n$ elements such that $\mathcal{S}'(S)$ is rigid over $\T$ provided that $S$ is a non-exceptionl surface.
\end{theorem}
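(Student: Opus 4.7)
The plan is to make explicit the finite family of curves already implicit in the proof of Theorem \ref{thm:mlss} and then count. First, I fix a pants decomposition $\Gamma = \{\gamma_1, \ldots, \gamma_{3g-3+n}\}$ and a collection of seams $\mathcal{B}$, as in that proof. I set aside the one-holed torus $(g=1,n=1)$, which is non-exceptional but handled by Case 1 of Theorem \ref{thm:mlss} using only four non-peripheral simple closed curves, comfortably below the claimed bound. For the remaining non-exceptional surfaces, I am in Case 2: for each generalized boundary component $\Delta_i$ of $S$ there is a generalized $\mathcal{X}$-piece $\mathcal{X}_i \subset S$ having $\Delta_i$ as one boundary component and two of the pants curves $\gamma_j \in \Gamma$ as its other geodesic boundaries.

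Next I would define $\mathcal{S}'(S)$ as the union of two explicit finite families: (i) for each of the $n$ generalized boundaries $\Delta_i$, the at most $28$ non-peripheral simple closed curves produced by the proof of Lemma \ref{lem:angsim} when applied to $\mathcal{X}_i$; (ii) for each pants curve $\gamma_j \in \Gamma$, the at most $4$ non-peripheral simple closed curves furnished by Lemma \ref{lem:twist} applied to the generalized $\mathcal{X}$-piece whose waist is $\gamma_j$. A direct count then gives $|\mathcal{S}'(S)| \leq 28n + 4(3g-3+n) = 12g - 12 + 32n$.

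Finally I would verify rigidity by replaying the proof of Theorem \ref{thm:mlss}: family (i) recovers the boundary assignments $\Lambda$ via Lemma \ref{lem:angsim}, using as input the lengths of the two pants-curve boundaries of $\mathcal{X}_i$, which are themselves among the data recorded by family (ii); family (ii) records the pants lengths $L$ directly, since each $\gamma_j$ is the waist in its own instance of Lemma \ref{lem:twist} and hence is one of the four curves counted there; and, applying Lemma \ref{lem:twist}, family (ii) also recovers the twist parameters $T$. Consequently, two surfaces in $\mathcal{T}_{g,n}$ whose geodesic lengths agree on every curve of $\mathcal{S}'(S)$ share the same Fenchel-Nielsen coordinates with respect to $(\Gamma,\mathcal{B})$, and so are equal in $\mathcal{T}_{g,n}$.

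The main point that requires care is the bookkeeping of the count. I need to be sure that the pants curves $\gamma_j$ do not have to be added on top of family (ii); this is immediate because $\gamma_j$ is the waist of the $\mathcal{X}$-piece to which Lemma \ref{lem:twist} is applied. A related minor subtlety is that the curves constructed inside each $\mathcal{X}_i$ are a priori non-peripheral only in $\mathcal{X}_i$; one must check that the specific curves arising from the proofs of Lemmas \ref{lem:angsim} and \ref{lem:twist} (the waist, the orthogeodesic-like companion, and their Dehn twist iterates) remain non-peripheral in $S$, which follows from the way $\mathcal{X}_i$ sits in $S$ with two of its boundary components already internal to $S$. Any overlaps between families (i) and (ii) only improve the bound, so the stated inequality $|\mathcal{S}'(S)| < 12g-12+32n$ holds unconditionally.
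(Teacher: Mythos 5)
Your proposal is correct and follows essentially the same route as the paper, which obtains the bound by the identical count $28n + 4(3g-3+n) = 12g-12+32n$ from the proofs of Lemma \ref{lem:angsim} and Lemma \ref{lem:twist}; your write-up is in fact more explicit than the paper's one-line justification, particularly in separating out the one-holed torus case and in checking that the pants curves $\gamma_j$ are already counted among the four curves of Lemma \ref{lem:twist}.
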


 Now Theorem A  follows from Theorem \ref{thm:mlss} and Theorem \ref{thm:frigidity}.
\remark In the case of closed surface, the subset $\mathcal{S}'(S)$ can be chosen such that it has $6g-5$ elements \cite{Sch}.

If we consider the subspace $\mathcal{T}_{g,n}(\Lambda)$ of $\mathcal{T}_{g,n}$, we get the following result.
\begin{theorem}\label{thm:mlssb}
Suppose $\Lambda\in(-\pi,\infty)^n$ and that  $S$ is a non-exceptional surface. Then $\Sim$ is rigid over $\mathcal{T}_{g,n}(\Lambda)$, i.e. if $l_{X_1}([\alpha])=l_{X_2}([\alpha])$ holds for any $ [\alpha]\in\Sim$, then $X_1=X_2$.
\end{theorem}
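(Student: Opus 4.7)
The plan is to deduce Theorem \ref{thm:mlssb} directly from Theorem \ref{thm:mlss}. By definition, $\mathcal{T}_{g,n}(\Lambda)$ is the subspace of $\mathcal{T}_{g,n}$ consisting of those marked hyperbolic cone structures whose generalized boundary components carry the prescribed generalized lengths $\Lambda$. Consequently, any two points $X_1, X_2 \in \mathcal{T}_{g,n}(\Lambda)$ are, in particular, points of the ambient space $\mathcal{T}_{g,n}$. If $l_{X_1}([\alpha]) = l_{X_2}([\alpha])$ for every $[\alpha] \in \Sim$, then Theorem \ref{thm:mlss} applied in $\mathcal{T}_{g,n}$ immediately gives $X_1 = X_2$, which is the desired conclusion inside the subspace $\mathcal{T}_{g,n}(\Lambda)$.

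If instead one prefers a self-contained argument, I would give a streamlined version of the proof of Theorem \ref{thm:mlss} that exploits the fact that $\Lambda$ is fixed a priori. Fix a coordinate system of curves $(\Gamma, \mathcal{B})$ with $\Gamma = \{\gamma_1, \ldots, \gamma_{3g-3+n}\}$ and write the Fenchel-Nielsen coordinates of $X_i$ as $(\Lambda, L_{X_i}, T_{X_i})$ for $i=1,2$. Since every pants curve $\gamma_j$ lies in $\Sim$, the length vector $L_{X_i} = (l_{X_i}(\gamma_1),\ldots,l_{X_i}(\gamma_{3g-3+n}))$ is read off directly from $\mlss(X_i)$, and the marked length spectrum hypothesis forces $L_{X_1} = L_{X_2}$. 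For each pants curve $\gamma_j$, the union of the (one or two) adjacent pairs of pants forms a generalized $\xp$ with $\gamma_j$ as waist; Lemma \ref{lem:twist} then expresses the twist parameter $t_j$ as a function of $\Lambda$ and the lengths of a bounded collection of non-peripheral simple closed curves, all of which agree for $X_1$ and $X_2$. Hence $T_{X_1} = T_{X_2}$, and injectivity of the Fenchel-Nielsen parametrization on $\mathcal{T}_{g,n}(\Lambda)$ yields $X_1 = X_2$.

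Since the technically delicate recovery of boundary assignments (Lemma \ref{lem:angsim}) is no longer needed in this setting, no substantial obstacle arises; the statement is essentially the restriction of Theorem \ref{thm:mlss} to a fixed slice of Fenchel-Nielsen space. The only mild care required is the standard one: when a pants curve $\gamma_j$ is adjacent to the same pair of pants on both sides, one still forms a generalized $\xp$ by cutting along $\gamma_j$ and regluing, so that Lemma \ref{lem:twist} continues to apply.
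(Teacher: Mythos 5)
Your proposal is correct and takes essentially the same approach as the paper, whose entire proof is the remark that the argument of Theorem \ref{thm:mlss} goes through verbatim except that the boundary assignments need not be recovered --- exactly your second, self-contained variant. Your first observation, that the statement is an immediate corollary of Theorem \ref{thm:mlss} because $\mathcal{T}_{g,n}(\Lambda)\subset\mathcal{T}_{g,n}$, is if anything cleaner than the paper's phrasing.
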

\begin{proof}
  The proof is similar to that of Theorem \ref{thm:mlss} except that we do not need to deal with the boundary assignments here.

\end{proof}

\subsection{Generalized marked length spectral rigidity}\label{sec:mlssn}

In~\cite{TWZ}, the authors proved a generalized McShane's identity  for length series of simple closed geodesics on a hyperbolic cone surface by studying ``gaps'' formed by simple-normal geodesics emanating from a distinguished cone point, cusp or boundary geodesic. For each pair of pants with three generalized boundary components $\Delta_0,\alpha,\beta$, where $\Delta_0$ is the distinguished generalized boundary component, they define a  {\it Gap function} $Gap(\Delta_0;\alpha,\beta)$ (see \cite{TWZ} for the definition of $Gap(\Delta_0;\alpha,\beta)$) . 

In the theorem below, a generalized simple closed geodesic is a simple closed geodesic,  a cone point, or a cusp.
\begin{theorem}[\cite{TWZ}]\label{thm:twz}
Let $X$ be a hyperbolic cone-surface with all cone angles
in $[0, \pi)$, and ${\Delta}_{0}$ a distinguished generalized boundary component.  Then one has either
\begin{eqnarray}\label{eqn:001}
\sum {\rm Gap}({\Delta}_{0};\alpha, \beta) = \frac{\theta_0}{2},
\end{eqnarray}
when ${\Delta}_{0}$ is a cone point of cone angle $\theta_{0}$;
\quad or
\begin{eqnarray}\label{eqn:002}
\sum {\rm Gap}({\Delta}_{0};\alpha, \beta) = \frac{l_{0}}{2},
\end{eqnarray}
when ${\Delta}_{0}$ is a boundary geodesic of length $l_{0}$; \quad
or
\begin{eqnarray}\label{eqn:00'}
\sum {\rm Gap}({\Delta}_{0};\alpha, \beta) = \frac{1}{2},
\end{eqnarray}
when ${\Delta}_{0}$ is a cusp; where in each case the sum is over
all pairs of generalized simple closed geodesics $\alpha, \beta$
on $M$ which bound with ${\Delta}_{0}$ an embedded pair of pants.
\end{theorem}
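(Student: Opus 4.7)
The plan is to adapt the Birman--Series / McShane / Mirzakhani strategy to the cone-surface setting. First I would set up a ``space of inward rays'' at $\Delta_0$, measured so that its total mass is precisely the right-hand side of the desired identity. When $\Delta_0$ is a boundary geodesic of length $l_0$, parametrize $\Delta_0$ by arc length and at each point consider the inward unit perpendicular; the total measure is $l_0$. When $\Delta_0$ is a cone point of angle $\theta_0$, parametrize the directions at the cone point and emit a unit-speed geodesic in each direction; the total measure is $\theta_0$. When $\Delta_0$ is a cusp, pick a horocycle of length $1$ around the cusp and emit the inward perpendicular at each point; the total measure is $1$. In all three cases I would denote this measure $L_0 \in \{l_0,\theta_0,1\}$, so that the target identity reads $\sum \mathrm{Gap}(\Delta_0;\alpha,\beta) = L_0/2$.

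Next I would classify the rays. By the Birman--Series theorem (extended to cone surfaces, using Proposition \ref{prop:elementary} to make sense of simple closed geodesic representatives and intersection numbers), the set of directions whose ray stays simple for all forward time is of Hausdorff dimension $1$ and hence of measure zero. So for almost every ray $r$ there is a first moment at which $r$ fails to be ``simple-normal''. Following Mirzakhani's dichotomy, that first failure produces in a canonical way an embedded pair of pants $P(r)\subset X$ whose distinguished boundary is $\Delta_0$ and whose other two generalized boundaries are a pair $\{\alpha,\beta\}$ of generalized simple closed geodesics (each being a simple closed geodesic, a cone point, or a cusp). The key point, which uses parts (a) and (c) of Proposition \ref{prop:elementary}, is that the two ``sides'' of the failure of simplicity spiral onto (or terminate at) $\alpha$ and $\beta$ respectively, and these are uniquely determined by $r$.

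Then I would compute, for each embedded pair of pants $P$ with $\Delta_0$ as distinguished boundary and other generalized boundaries $\alpha,\beta$, the measure of the set
\[
E(\alpha,\beta) \;=\; \{\, r : P(r)=P\,\}.
\]
Inside the pair of pants $P$ the problem is a planar hyperbolic trigonometry exercise: cutting $P$ along the three perpendicular seams and using the pentagon/hexagon/tri-rectangle formulas collected in Lemma \ref{lem:ele-formular}, one expresses the angular (or length, or horocyclic) size of $E(\alpha,\beta)$ as a closed-form function of the lengths/angles of $\Delta_0,\alpha,\beta$; this function is exactly $\mathrm{Gap}(\Delta_0;\alpha,\beta)$ as defined in \cite{TWZ}. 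Since the sets $E(\alpha,\beta)$ are pairwise disjoint, open, and cover a full-measure subset of the space of inward rays, summing their measures gives $L_0$, which after the factor $1/2$ built into the definition of $\mathrm{Gap}$ (reflecting the two ``sides'' of each gap, or equivalently orientation-reversal symmetry of the pair of pants) yields the three claimed identities.

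The main obstacle is the Birman--Series measure-zero statement in the presence of cone angles in $(0,\pi)$. Ordinary Birman--Series is for smooth hyperbolic surfaces, and here one must argue that a forward ray which stays simple forever either does not hit any cone point (so classical Birman--Series applies in $X\setminus \mathrm{Cone}(X)$), or, if it does hit a cone point, that it terminates there as a simple arc — and the set of such ``cone-terminating simple rays'' is countable. A second technical issue is the unique extraction of the pair of pants $P(r)$ when the ray emerges from a cone point: the cone angle being $<\pi$ is exactly what ensures that the two emanating sub-rays remain disjoint long enough to bound an embedded pair of pants, so this convexity condition on cone angles is used in an essential way. Once these two points are handled, the rest reduces to the trigonometric computation of $\mathrm{Gap}(\Delta_0;\alpha,\beta)$ piece by piece.
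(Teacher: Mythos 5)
This theorem is not proved in the paper at all: it is imported verbatim from \cite{TWZ}, and the paper's only gloss on its proof is the one-sentence summary at the beginning of \S\ref{sec:mlssn}, namely that the identity is obtained by studying ``gaps'' formed by simple-normal geodesics emanating from the distinguished cone point, cusp or boundary geodesic. Your outline is exactly that strategy --- the McShane/Mirzakhani decomposition of the space of inward rays at $\Delta_0$ (normalized to total mass $\theta_0$, $l_0$ or $1$), a Birman--Series-type measure-zero statement for non-simple directions, and a per-pair-of-pants trigonometric evaluation of the gap measure --- so there is no divergence of method to report. What you have written, however, is an architecture rather than a proof: the two points you yourself flag as obstacles (extending Birman--Series to surfaces with cone points, and canonically extracting the embedded pair of pants $P(r)$ from the first failure of simplicity) are precisely the substantive content of \cite{TWZ}, not routine adaptations. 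The fact that makes both steps work, and which you should state explicitly rather than gesturing at ``convexity'', is that a locally length-minimizing path can never pass \emph{through} a cone point of angle $<\pi$: the two angles it would form there sum to less than $\pi$, so the path can be shortened on either side; hence rays either avoid the singular locus or terminate there. You would also need to verify absolute convergence of the gap series, check that the boundary sets separating adjacent gaps have measure zero before summing, and confirm that the factor $1/2$ really is the normalization built into the definition of ${\rm Gap}$ in \cite{TWZ} rather than asserting it. As a blind reconstruction of the cited proof's strategy your sketch is faithful; as a self-contained proof it is incomplete in exactly the places you identify.
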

\remark \label{appear} For any  non-peripheral simple closed  geodesic $\gamma$, there is a generalized simple closed geodesic $\gamma'$ with $\gamma'\neq \gamma$, such that $\gamma,\gamma'$ and $\Delta_0$ bounds an embedded pair of pants. Hence $\gamma$ appears  in the left side of each identity (see Fig. ~\ref{fig:pants-decomp}).

\vskip 5pt
Combining Theorem \ref{thm:mlss} and Theorem \ref{thm:twz}, we get the following generalized marked length spectral rigidity.
\begin{theoremc}\label{cor:mlssn}
Suppose that $\Lambda\in(-\pi,\infty)^n$ and that  $S$ is a non-exceptional surface. Let  $X_1,\ X_2\in \mathcal{T}_{g,n}(\Lambda)$. If $l_{X_1}([\alpha])\geq l_{X_2}([\alpha])$ holds for every $[\alpha] \in\Sim$, then $X_1=X_2$.
\end{theoremc}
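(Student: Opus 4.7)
The plan is to upgrade the one-sided inequality hypothesis to two-sided equality and then invoke Theorem~\ref{thm:mlssb}. I would argue by contradiction: suppose there exists $[\alpha_0]\in\Sim$ with $l_{X_1}([\alpha_0])>l_{X_2}([\alpha_0])$, and extract a contradiction from the generalized McShane identity of Tam--Wong--Zhang.

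First, using the remark after Theorem~\ref{thm:twz}, I would select a distinguished generalized boundary component $\Delta_0\in\Sigma_S$ and a second generalized simple closed geodesic $\beta_0$ such that $\alpha_0, \beta_0, \Delta_0$ bound an embedded pair of pants in $S$. Applying Theorem~\ref{thm:twz} to each of $X_1$ and $X_2$ with distinguished boundary $\Delta_0$, the right-hand side depends only on $\lambda(\Delta_0)$ (fixed by $\Lambda$), so it is the same for both surfaces, and
\begin{equation*}
\sum_{\{\alpha,\beta\}}\mathrm{Gap}_{X_1}(\Delta_0;\alpha,\beta)=\sum_{\{\alpha,\beta\}}\mathrm{Gap}_{X_2}(\Delta_0;\alpha,\beta),
\end{equation*}
where both sums are indexed by the same collection of isotopy classes of pairs $(\alpha,\beta)$ of generalized simple closed geodesics forming an embedded pair of pants with $\Delta_0$.

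The key ingredient I would then invoke is a strict monotonicity statement for the gap function: with $\lambda(\Delta_0)$ and the generalized lengths of any peripheral entries held fixed, $\mathrm{Gap}(\Delta_0;\alpha,\beta)$ is strictly decreasing in each of the lengths $l(\alpha), l(\beta)$ over non-peripheral classes. Granting this, the peripheral generalized lengths on $X_1$ and $X_2$ coincide (both equal to $\Lambda$), while by hypothesis every non-peripheral length on $X_1$ dominates the corresponding one on $X_2$; thus every summand on the left-hand side is at most the corresponding summand on the right-hand side. Moreover, the summand indexed by $(\alpha_0,\beta_0)$ is \emph{strictly} smaller, because $\alpha_0$ is non-peripheral and $l_{X_1}([\alpha_0])>l_{X_2}([\alpha_0])$. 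Summing yields a strict inequality, contradicting the displayed identity. Therefore $l_{X_1}([\alpha])=l_{X_2}([\alpha])$ for every $[\alpha]\in\Sim$, and Theorem~\ref{thm:mlssb} concludes $X_1=X_2$.

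The main obstacle is verifying the strict monotonicity of $\mathrm{Gap}(\Delta_0;\alpha,\beta)$ with respect to the non-peripheral length arguments. I would attack this by unpacking the closed-form expressions for the gap function given in~\cite{TWZ}, where $\mathrm{Gap}(\Delta_0;\alpha,\beta)$ is the measure of an interval of directions at $\Delta_0$ determined by the geometry of the embedded pair of pants bounded by $\Delta_0, \alpha, \beta$; differentiating these expressions with respect to $l(\alpha)$ and $l(\beta)$ and checking signs should establish the inequality. Alternatively, a geometric deformation argument — shrinking a cuff enlarges the collar of simple-normal geodesics that spiral around it, increasing the corresponding gap — should do the job. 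Once monotonicity is in place, the rest of the argument is formal.
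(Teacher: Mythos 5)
Your proposal is correct and follows essentially the same route as the paper: fix a distinguished generalized boundary component, note that the right-hand side of the Tam--Wong--Zhang identity depends only on $\Lambda$ and hence agrees for $X_1$ and $X_2$, use strict monotonicity of the gap function in the non-peripheral lengths together with the fact that every class in $\Sim$ appears in the sum to force $l_{X_1}([\alpha])=l_{X_2}([\alpha])$ for all $[\alpha]$, and then conclude by Theorem~\ref{thm:mlssb}. The strict monotonicity you flag as the main obstacle is exactly the point the paper also relies on (asserting it as a known property of the gap functions from \cite{TWZ} rather than reproving it), so your plan matches the published argument in substance.
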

\begin{proof}
First, we take a cone point $P_0$  as  the distinguished cone point $\Delta_0$. If there is no cone point, we pick a cusp or a geodesic boundary component  instead. Since  $X_1,\ X_2\in \mathcal{T}_{g,n}(\Lambda)$, they share the same cone angles and  boundary lengths, which lead to
\begin{equation}\label{twz-id}
 \sum {\rm Gap}_{X_1}(\Delta_{0};\alpha, \beta) = \frac{\theta}{2}=\sum {\rm Gap}_{X_2}(\Delta_{0};\alpha, \beta),
\end{equation}
where  the sum is over
all pairs of generalized simple closed geodesics $\alpha, \beta$
on $S$ which bound with $P_{0}$ an embedded pair of pants.

Note that  each \textit{gap function} is a strictly decreasing function about the length of each involved  non-peripheral simple closed geodesic. Combining this with the assumption that $l_{X_1}(\alpha)\geq l_{X_2}(\alpha),$  for any $ \alpha \in\Sim$, we have
\begin{equation}\label{ineq}
 \sum {\rm Gap}_{X_1}(\Delta_{0};\alpha, \beta) \leq \sum {\rm Gap}_{X_2}(\Delta_{0};\alpha, \beta),
\end{equation}

\noindent with the equality holds if and only if $l_{X_1}(\alpha)=  l_{X_2}(\alpha)$ for any $\alpha \in\Sim$ (by Remark~\ref{appear} every $\alpha \in\Sim$ is involved in the sum). Combining with (\ref{twz-id}) and  (\ref{ineq}), we have $l_{X_1}(\alpha)=  l_{X_2}(\alpha)$ for any $ \alpha \in\Sim$.

Now, the theorem follows directly from Theorem~\ref{thm:mlssb}.
\end{proof}

As an application, we can define the Thurson's asymmetric metric on $\mathcal{T}_{g,n}(\Lambda)$ for any $\Lambda\in(-\pi,\infty)^n$.
\begin{corollary}[the Thurston metric]\label{thmetric}
   Suppose  $\Lambda\in(-\pi,\infty)^n$ and that   $S$ is a non-exceptional surface.  For any two points $X_1,X_2\in \mathcal{T}_{g,n}(\Lambda)$,
$$d_{Th}(X_1,X_2)\triangleq \log \sup_{[\alpha]\in\Sim}\frac{l_{X_2}([\alpha])}{l_{X_1}([\alpha])} $$
defines an asymmetric metric on $\mathcal{T}_{g,n}(\Lambda)$, which is called the Thurston metric.
\end{corollary}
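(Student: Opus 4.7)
The plan is to verify the four axioms of an asymmetric metric in turn: finiteness, non-negativity, separation (i.e.\ identity of indiscernibles), and the triangle inequality. All but finiteness will be immediate from Theorem~\ref{cor:mlssn} (or purely formal), so the main obstacle is the finiteness of the supremum, which I will reduce to Theorem B.

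The triangle inequality is formal. For any $[\alpha]\in\Sim$ and any $X_1,X_2,X_3\in\mathcal{T}_{g,n}(\Lambda)$ one has
\[
\frac{l_{X_3}([\alpha])}{l_{X_1}([\alpha])}
=\frac{l_{X_3}([\alpha])}{l_{X_2}([\alpha])}\cdot\frac{l_{X_2}([\alpha])}{l_{X_1}([\alpha])}
\leq e^{d_{Th}(X_2,X_3)}\cdot e^{d_{Th}(X_1,X_2)},
\]
and taking the supremum over $[\alpha]$ and then $\log$ gives $d_{Th}(X_1,X_3)\leq d_{Th}(X_1,X_2)+d_{Th}(X_2,X_3)$. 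For the identity of indiscernibles, $X_1=X_2$ trivially gives $d_{Th}(X_1,X_2)=0$. Conversely, if $d_{Th}(X_1,X_2)=0$, then $l_{X_2}([\alpha])\leq l_{X_1}([\alpha])$ for every $[\alpha]\in\Sim$, so Theorem~\ref{cor:mlssn} yields $X_1=X_2$. For non-negativity, suppose for contradiction that $d_{Th}(X_1,X_2)<0$; then $l_{X_2}([\alpha])<l_{X_1}([\alpha])$ for every $[\alpha]\in\Sim$, and Theorem~\ref{cor:mlssn} again forces $X_1=X_2$, contradicting the strict inequality on any single curve. Hence $d_{Th}(X_1,X_2)\geq 0$.

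The remaining point is to show $d_{Th}(X_1,X_2)<\infty$, which amounts to producing a uniform constant $K=K(X_1,X_2)$ with $l_{X_2}([\alpha])\leq K\,l_{X_1}([\alpha])$ for every non-peripheral simple closed curve $[\alpha]$. My plan is to fix any coordinate system of curves $(\Gamma,\mathcal{B})$ and let $X_i'=F_{\Gamma,\mathcal{B},\Lambda}(X_i)\in\mathcal{T}_{g,n}(0)$. Theorem B supplies a constant $C=C(\Lambda)$ with $C^{-1}\leq l_{X_i}([\alpha])/l_{X_i'}([\alpha])\leq C$ for every $[\alpha]\in\Sim$ and $i=1,2$, so
\[
\frac{l_{X_2}([\alpha])}{l_{X_1}([\alpha])}\;\leq\; C^{2}\cdot\frac{l_{X_2'}([\alpha])}{l_{X_1'}([\alpha])}.
\]
It is classical that Thurston's asymmetric metric is finite on $\mathcal{T}_{g,n}(0)$ (one takes a suitable Lipschitz map between punctured hyperbolic surfaces), so the right-hand side is bounded uniformly in $[\alpha]$, and therefore $d_{Th}(X_1,X_2)\leq 2\log C + d_{Th}(X_1',X_2')<\infty$.

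Putting the four items together gives that $d_{Th}$ satisfies all the axioms of an asymmetric metric on $\mathcal{T}_{g,n}(\Lambda)$. The only non-trivial input is Theorem~\ref{cor:mlssn}, which furnishes both the separation property and the strict-inequality argument for non-negativity; finiteness is a bootstrap from the classical case via Theorem B, and the triangle inequality is tautological from the definition as a logarithm of a supremum of ratios.
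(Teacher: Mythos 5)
Your proof is correct and follows the same route the paper intends: the paper states this as an immediate consequence of Theorem C (which supplies separation and, via your contrapositive argument, non-negativity), with the triangle inequality formal and finiteness available from the comparison with $\mathcal{T}_{g,n}(0)$ in Theorem B. Your write-up simply makes explicit the finiteness and non-negativity checks that the paper leaves implicit, which is a reasonable and accurate filling-in.
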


\section{Comparisons of geometry between hyperbolic cone surfaces}\label{sec:comp-Y}

In this section, we build a connection between a general hyperbolic cone surface and a punctured hyperbolic surface by comparing the lengths of the geodesic representatives of  isotopy classes of non-peripheral simple closed curves on both surfaces. Before that, we prove some lemmas first.
\begin{lemma}\label{lemma:comparisons}
    Assume $c,c'>0$ and let $d(c,\rho_1,\rho_2),f(c,\rho_1,\rho_2)$ be two functions of $\rho_1,\rho_2$ and $c$ defined by
  $$ \cosh d(c,\rho_1,\rho_2)=f(c,\rho_1,\rho_2)\triangleq \cosh{\rho_1}\cosh{\rho_2}\cosh c -\sinh \rho_1 \sinh \rho_2.$$
  If
   $$  \frac{1}{C}\leq\frac{\cosh c-1}{\cosh c'-1}\leq C $$
  for some $C>1.$ Then
    \begin{enumerate}
    [(a)]
    \item 
        $$ |d(c',\rho_1,\rho_2)-d(c,\rho_1,\rho_2)|
       \leq  \textup{arccosh }C.$$
    \item 
        $$\frac{1}{C}\leq \frac{d(c,\rho_1,\rho_2)}
        {d(c',\rho_1,\rho_2)}\leq C.$$
  \end{enumerate}
\end{lemma}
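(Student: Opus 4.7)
The plan is to reduce both parts to a single affine comparison between $\cosh d-1$ and $\cosh d'-1$, where $d=d(c,\rho_1,\rho_2)$ and $d'=d(c',\rho_1,\rho_2)$.

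\textbf{Step 1 (affine comparison).} Using the identity $\cosh\rho_1\cosh\rho_2-\sinh\rho_1\sinh\rho_2=\cosh(\rho_1-\rho_2)$, I rewrite
$$\cosh d(c,\rho_1,\rho_2)-1 \;=\; A(\cosh c-1) + B, \qquad A:=\cosh\rho_1\cosh\rho_2,\; B:=\cosh(\rho_1-\rho_2)-1,$$
together with the analogous identity for $c'$. Since $A\ge 1$, $B\ge 0$, and $C\ge 1$, the hypothesis $\frac{1}{C}\le\frac{\cosh c-1}{\cosh c'-1}\le C$ transfers term by term (multiply by $A>0$, then add $B$, absorbing the extra $B$ into the factor $C$) to give
$$\tfrac{1}{C} \;\le\; \frac{\cosh d-1}{\cosh d'-1} \;\le\; C.$$
Writing $\cosh d=(\cosh d-1)+1$ and using $C\ge 1$ to absorb the $+1$, I also obtain $\tfrac{1}{C}\cosh d'\le\cosh d\le C\cosh d'$.

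\textbf{Step 2 (part (a)).} Assuming without loss of generality $d\ge d'\ge 0$, the hyperbolic addition formula
$$\cosh d \;=\; \cosh(d-d')\cosh d' + \sinh(d-d')\sinh d'$$
has a nonnegative second term, so $\cosh(d-d')\le \cosh d/\cosh d'\le C$. This yields $|d-d'|\le\operatorname{arccosh}C$, and the case $d'\ge d$ is symmetric.

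\textbf{Step 3 (part (b)).} Using $\cosh x-1=2\sinh^2(x/2)$ in the comparison of Step 1 gives
$$\tfrac{1}{\sqrt{C}} \;\le\; \frac{\sinh(d/2)}{\sinh(d'/2)} \;\le\; \sqrt{C}.$$
The key ingredient is that $x\mapsto\sinh(x)/x$ is strictly increasing on $(0,\infty)$ (since $\sinh$ is convex with $\sinh(0)=0$); equivalently, for any $0<a\le b$ one has $b/a\le\sinh(b)/\sinh(a)$. Applying this with $a=\min(d,d')/2$ and $b=\max(d,d')/2$,
$$\frac{\max(d,d')}{\min(d,d')} \;\le\; \frac{\sinh(\max(d,d')/2)}{\sinh(\min(d,d')/2)} \;\le\; \sqrt{C} \;\le\; C,$$
which yields $\frac{1}{C}\le \frac{d}{d'}\le C$. (Actually the sharper bound $\sqrt{C}$ holds, but the weaker bound is what is stated.)

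The main (minor) obstacle I anticipate is keeping the direction of the monotonicity of $\sinh(x)/x$ straight in Step 3; once the affine factorization of Step 1 is in place, the rest is routine. Degenerate cases ($c=0$ or $\rho_i=0$) cause no trouble because $c,c'>0$ forces $d,d'>0$ in view of $\cosh d-1=A(\cosh c-1)+B\ge A(\cosh c-1)>0$.
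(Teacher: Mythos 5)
Your proof is correct, and it reaches the conclusion by a cleaner route than the paper. The paper factors the argument through two auxiliary lemmas: one (Lemma \ref{lemma:comparison-2}) shows $\frac{f(c,\rho_1,\rho_2)}{f(c',\rho_1,\rho_2)}\geq\frac{\cosh c-1}{\cosh c'-1}$ and $\frac{f(c,\rho_1,\rho_2)^2-1}{f(c',\rho_1,\rho_2)^2-1}\geq\bigl(\frac{\cosh c-1}{\cosh c'-1}\bigr)^2$ by massaging the ratio and using $\tanh\rho_1\tanh\rho_2<1$; the other (Lemma \ref{lemma:comparison-1}) converts a multiplicative bound on $\cosh$ into an additive bound on the argument, and a multiplicative bound on $\sinh$ into a multiplicative one, via calculus on implicitly defined functions $F(y)$ and $K(y)$. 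Your affine identity $\cosh d-1=\cosh\rho_1\cosh\rho_2\,(\cosh c-1)+\bigl(\cosh(\rho_1-\rho_2)-1\bigr)$ replaces the first lemma wholesale and makes the transfer of the hypothesis to $\frac{\cosh d-1}{\cosh d'-1}\in[1/C,C]$ immediate (it even exposes the sharper constant $\sqrt{C}$ in part (b), which the paper's formulation hides); and your Steps 2 and 3 replace the second lemma by the hyperbolic addition formula and the monotonicity of $\sinh(x)/x$, which is more elementary than the paper's derivative computations. Both arguments are sound; yours is self-contained and shorter.
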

\begin{proof}
  Part (a) follows from Lemma~\ref{lemma:comparison-1}(a) and  \ref{lemma:comparison-2}(a), part (b) follows from Lemma~\ref{lemma:comparison-1}(b) and  \ref{lemma:comparison-2}(b).
\end{proof}

\begin{lemma}\label{lemma:comparison-1}
Let $x,y>0$.
\begin{enumerate}
  [(a)]
  \item  If
            $$ \frac{1}{C}\leq\frac{\cosh x}{\cosh y}\leq C,$$
         then
            $$|x-y|\leq \textup{arccosh } C.$$
  \item  If
            $$ \frac{1}{C}\leq\frac{\sinh x}{\sinh y}\leq C,$$
         then
            $$\frac{1}{C}\leq\frac{x}{y}\leq C.$$
\end{enumerate}
\end{lemma}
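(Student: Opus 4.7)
The plan is to handle the two parts of Lemma~\ref{lemma:comparison-1} separately using simple monotonicity arguments.

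\medskip

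\textbf{Part (a).} Without loss of generality assume $x \geq y > 0$, so that $\cosh x \geq \cosh y$ and the upper ratio bound $\cosh x/\cosh y \leq C$ is the nontrivial one. The key identity is the addition formula
\[
\cosh x \;=\; \cosh\bigl((x-y) + y\bigr) \;=\; \cosh(x-y)\cosh y + \sinh(x-y)\sinh y.
\]
Dividing by $\cosh y$ gives
\[
\frac{\cosh x}{\cosh y} \;=\; \cosh(x-y) + \sinh(x-y)\tanh y \;\geq\; \cosh(x-y),
\]
since $x-y \geq 0$ and $y > 0$ make both terms nonnegative. Combining with the hypothesis yields $\cosh(x-y) \leq C$, hence $|x-y| = x - y \leq \operatorname{arccosh} C$. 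If instead $y \geq x$, swap their roles and use the lower bound $\cosh y/\cosh x \leq C$ in exactly the same way.

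\medskip

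\textbf{Part (b).} I prove the contrapositive of the upper inequality: if $x/y > C$, then $\sinh x /\sinh y > C$; by symmetry (swapping $x$ and $y$) this handles the lower inequality as well. The core fact I need is the following elementary inequality: for every $c > 1$ and $y > 0$,
\[
\sinh(cy) \;>\; c \sinh(y).
\]
To see this, set $h(y) := \sinh(cy) - c\sinh(y)$. Then $h(0)=0$ and $h'(y) = c\bigl(\cosh(cy) - \cosh(y)\bigr) > 0$ for $c>1$, $y>0$, so $h(y)>0$ on $(0,\infty)$. Now assume $x/y = c > C > 1$. Applying the inequality with this $c$ gives
\[
\frac{\sinh x}{\sinh y} \;=\; \frac{\sinh(cy)}{\sinh y} \;>\; c \;>\; C,
\]
which is the desired contradiction with the hypothesis $\sinh x/\sinh y \leq C$.

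\medskip

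No real obstacle is expected here; both parts reduce to one-line analytic facts (the $\cosh$ addition formula for (a), and monotonicity of $h(y)=\sinh(cy)-c\sinh(y)$ for (b)). The only subtlety worth flagging explicitly is that in (b) one genuinely needs $x,y>0$ so that $\sinh$ is strictly convex and the derivative argument applies; the lemma's hypothesis already ensures this. Once this lemma is in hand, Lemma~\ref{lemma:comparisons} follows by the combination indicated in the paper together with the companion Lemma~\ref{lemma:comparison-2}.
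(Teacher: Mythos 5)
Your proof is correct, and it takes a genuinely different and more elementary route than the paper's. For part (a) the paper implicitly defines $F(y)$ by $\cosh(y+F)=C\cosh y$, computes $F$ explicitly, and shows via $F'(y)<0$ that $F(y)\leq F(0)=\operatorname{arccosh}C$; you instead apply the addition formula to get $\cosh x/\cosh y\geq\cosh(x-y)$ directly, which collapses the whole argument to one line and avoids the explicit logarithmic formula. For part (b) the paper defines $K(y)$ by $\sinh(yK)=C\sinh y$ and bounds it by examining its critical points together with its limits as $y\to 0$ and $y\to\infty$; you instead establish the superlinearity $\sinh(cy)>c\sinh(y)$ for $c>1$, $y>0$ (via $h(y)=\sinh(cy)-c\sinh y$, $h(0)=0$, $h'>0$) and run a contrapositive. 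Both arguments ultimately rest on the same convexity of $\sinh$ at the origin, but yours sidesteps the paper's somewhat delicate analysis of where $K$ attains its supremum. The only point worth making explicit is that your part (b) tacitly uses $C\geq 1$ (when you write $x/y=c>C>1$); this is forced by the hypothesis $1/C\leq\sinh x/\sinh y\leq C$, and the case $C=1$ is trivial, so nothing is lost.
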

\begin{proof}
   (a) Let $F(y)$ be defined by
   $$ \cosh(y+F)=C\cosh y.$$
   Hence
  $$ \frac{e^{y+F}+e^{-y-F}}{2}=C\cosh y ,$$
  then
  $$
  F(y)={\ln (C\cosh y+\sqrt{C^2\cosh^2y-1})}-{y},
  $$
  further,
  $$
  F'(y)=\frac{C\sinh y}{\sqrt{C^2\cosh^2y-1}}-1<0.
  $$
  As a result,
  $$F(y)\leq F(0)=\textup{arccosh }C.$$
  On the other hand,
  $$\cosh x\leq C\cosh y=\cosh (y+F(y)). $$
  It follows that
   $$x-y\leq \textup{arccosh }C.$$
   By symmetry, we have $y-x\leq \textup{arccosh }C.$
   \vskip 5pt
  (b)
  Let $K(y)$ be a function of $y$ defined by
  $$ \sinh (yK)=C\sinh y.$$
  Hence
  $$ \frac{e^{yK}-e^{-yK}}{2}=C\sinh y ,$$
  then
  $$
  K(y)=\frac{\ln (C\sinh y+\sqrt{1+C^2\sinh^2y})}{y},
  $$
  further,
  $$
  K'(y)=\frac{C\cosh y}{y\sqrt{1+C^2\sinh^2y}}-\frac{{\ln (C\sinh y+\sqrt{1+C^2\sinh^2y})}}{y^2}.
  $$
  Setting
  $K'(y_0)=0$,
  we have
  \begin{eqnarray*}
     K(y_0)
     &=&\frac{\ln (C\sinh y_0+\sqrt{1+C^2\sinh^2y_0})}{y_0}\\
     &=&\frac{C\cosh y_0}{\sqrt{1+C^2\sinh^2y_0}}\\
     &<&C.
  \end{eqnarray*}
 Note that
 \begin{eqnarray*}
   K(y)\to C,\ \textup{as }y\to0 &\textup{and}&  K(y)\to 1,\ \textup{as }y\to\infty.
 \end{eqnarray*}
 Therefore $K(y)\leq C$ for any $y\geq 0$.   Since $\sinh x\leq C \sinh y =\sinh yK(y)$, $x/y\leq C$.  By symmetry, we also have $y/x\leq C.$

\end{proof}

   \begin{lemma}\label{lemma:comparison-2}
  Assume $0<c\leq c'$ and let $f(c,\rho_1,\rho_2)$ be the function defined in Lemma~\ref{lemma:comparisons}.
  then
  \begin{enumerate}
    [(a)]
    \item
    $$\frac{f(c,\rho_1,\rho_2)}{f(c',\rho_1,\rho_2)}
    \geq \frac{\cosh c-1}{\cosh c'-1}.$$
    \item
    $$\frac{f(c,\rho_1,\rho_2)^2-1}{f(c',\rho_1,\rho_2)^2-1}
    \geq ( \frac{\cosh c-1}{\cosh c'-1})^2.$$
  \end{enumerate}

 \end{lemma}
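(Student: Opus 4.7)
The plan is to decompose $f$ into an affine function of $\cosh c - 1$ plus a nonnegative remainder, after which both inequalities reduce to elementary algebra.

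First, using the identity $\cosh\rho_1\cosh\rho_2 - \sinh\rho_1\sinh\rho_2 = \cosh(\rho_1 - \rho_2)$, I would rewrite
$$f(c,\rho_1,\rho_2) = \cosh\rho_1\cosh\rho_2\,(\cosh c - 1) + \cosh(\rho_1 - \rho_2).$$
Setting $A := \cosh\rho_1\cosh\rho_2$, $B := \cosh(\rho_1 - \rho_2)$, $u := \cosh c - 1$ and $v := \cosh c' - 1$, this reads $f(c) = Au + B$ and $f(c') = Av + B$, where $A \geq 1$, $B \geq 1$, and $0 \leq u \leq v$. The target inequalities then become $\frac{Au+B}{Av+B} \geq \frac{u}{v}$ for (a) and $\frac{(Au+B)^2 - 1}{(Av+B)^2 - 1} \geq \left(\frac{u}{v}\right)^2$ for (b).

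Part (a) follows at once: cross-multiplying gives
$$v(Au+B) - u(Av+B) = B(v-u) \geq 0,$$
since $B \geq 0$ and $u \leq v$. For part (b), I would show that the function $h(x) := (Ax+B)^2 - 1$ has the property that $h(x)/x^2$ is non-increasing on $(0,\infty)$. Expanding gives
$$\frac{h(x)}{x^2} = A^2 + \frac{2AB}{x} + \frac{B^2 - 1}{x^2},$$
and since $A,B \geq 0$ and crucially $B^2 - 1 \geq 0$ (because $B \geq 1$), every $x$-dependent term is non-increasing. Evaluating at $x = u \leq v$ then yields (b).

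No substantive obstacle is anticipated. The only point requiring care is the observation that $B = \cosh(\rho_1-\rho_2) \geq 1$, which is exactly what makes the $x^{-2}$ coefficient in the expansion nonnegative and hence drives (b); once the decomposition $f = A(\cosh c - 1) + B$ is in place, both bounds are transparent.
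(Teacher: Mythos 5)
Your proof is correct. The affine decomposition $f(c,\rho_1,\rho_2)=A(\cosh c-1)+B$ with $A=\cosh\rho_1\cosh\rho_2\geq 1$ and $B=\cosh(\rho_1-\rho_2)\geq 1$ is valid, part (a) reduces to $B(v-u)\geq 0$ exactly as you say, and for part (b) the monotonicity of $h(x)/x^2=A^2+2AB/x+(B^2-1)/x^2$ does follow from $B\geq 1$ (with $h(v)>0$ guaranteed since $Av+B>1$, so the division is legitimate).

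The paper reaches the same conclusions with the same two underlying facts but packages them differently. For (a) it writes the ratio as $1-\frac{\cosh c'-\cosh c}{\cosh c'-\tanh\rho_1\tanh\rho_2}$ and uses $\tanh\rho_1\tanh\rho_2\leq 1$; this is your computation $\frac{Au+B}{Av+B}\geq\frac{u}{v}$ after dividing through by $A$. For (b) the paper factors
$$\frac{f(c)^2-1}{f(c')^2-1}=\frac{f(c)+1}{f(c')+1}\cdot\frac{f(c)-1}{f(c')-1},$$
bounds the first factor below by $f(c)/f(c')$ (using $f(c)\leq f(c')$) and the second by $\frac{\cosh c-1}{\cosh c'-1}$ (using $\cosh(\rho_1-\rho_2)\geq 1$, i.e.\ your $B\geq 1$), then multiplies by part (a). Your route replaces this two-factor bookkeeping with a single monotonicity statement for $h(x)/x^2$, which is arguably more transparent and makes the role of $B\geq 1$ explicit in one place; the paper's version has the minor advantage of visibly reusing part (a). Either argument is complete and elementary.
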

 \begin{proof}
 (a) Note that
 \begin{eqnarray*}
    \frac{f(c,\rho_1,\rho_2)}{f(c',\rho_1,\rho_2)}
    &=& \frac{\cosh{\rho_1}\cosh{\rho_2}\cosh c -\sinh \rho_1 \sinh \rho_2}{\cosh{\rho_1}\cosh{\rho_2}\cosh c' -\sinh \rho_1 \sinh \rho_2}\\
    &=& 1-\frac{\cosh c'-\cosh c}{\cosh c'-\tanh \rho_1\tanh \rho_2}\\
    &\geq& 1-\frac{\cosh c'-\cosh c}{\cosh c'-1}\\
    &=&\frac{\cosh c-1}{\cosh c'-1}.
 \end{eqnarray*}
   (b) Observe that
        \begin{eqnarray*}
          \frac{f(c,\rho_1,\rho_2)^2-1}{f(c',\rho_1,\rho_2)^2-1}
          &  =  & \frac{f(c,\rho_1,\rho_2)+1}{f(c',\rho_1,\rho_2)+1}
                  \frac{f(c,\rho_1,\rho_2)-1}{f(c',\rho_1,\rho_2)-1}\\
          &\geq & \frac{f(c,\rho_1,\rho_2)}{f(c',\rho_1,\rho_2)}
                  \frac{f(c,\rho_1,\rho_2)-1}{f(c',\rho_1,\rho_2)-1},\\
        \end{eqnarray*}
    and
     \begin{eqnarray*}
     \frac{f(c,\rho_1,\rho_2)-1}{f(c',\rho_1,\rho_2)-1}
     &=& 1-\frac{\cosh c'-\cosh c}{\cosh c'-\frac{\sinh \rho_1\sinh \rho_2+1}{\cosh \rho_1\cosh\rho_2}}\\
    &\geq& 1-\frac{\cosh c'-\cosh c}{\cosh c'-1}\\
    && (\textup{since }\cosh(\rho_1-\rho_2)\geq1)\\
    &=&\frac{\cosh c-1}{\cosh c'-1}.
   \end{eqnarray*}
    Then we have
    $$\frac{f(c,\rho_1,\rho_2)^2-1}{f(c',\rho_1,\rho_2)^2-1}
    \geq ( \frac{\cosh c-1}{\cosh c'-1})^2.$$

 \end{proof}

 Recall that  a generalized boundary component $\Delta$ is a cone point with cone angle $\theta\in (0,\pi)$,  a cusp, or a geodesic boundary of positive length. The assignment  $\lambda (\Delta)$ at a generalized boundary component is:
  \begin{equation}
    \lambda(\Delta)=\left \{
                            \begin{array}{ll}
                              -\theta, & \text{ if } \Delta \text{ is a cone point of angle } \theta\in(0,\pi),\\
                              0,& \text{ if } \Delta \text{ is a cone point},\\
                              l,& \text{ if } \Delta \text{ is a geodesic boundary of length } l>0.
                            \end{array}
                    \right.
  \end{equation}

 Let $ V(\lambda_1,\lambda_2,\lambda_3)$  be a \textit{ generalised Y-piece}
 whose  boundary assignments satisfy $\Lambda=(\lambda_1,\lambda_2,\lambda_3)\in (0,\infty)\times (-\pi, \infty)\times(-\pi,\infty)$, and $\Delta_1,\Delta_2,\Delta_3$  be the three  marked generalized boundary components. Let $\zeta$  be an arbitrary simple arc with  endpoints  $M,N$ on the  geodesic boundary components (not a cone point nor a cusp). Assume that  $M$ is on $ \Delta_1$ and $N$ is on $\Delta^N\in\{\Delta_1,\Delta_2,\Delta_3\}$. Denote by $g_\zeta$ the geodesic representative in the corresponding homotopy class of $\zeta$ relative to the endpoints, i.e. the endpoints $M,N$ stay fixed during the homotopy, and  by $h_{\zeta}$ the geodesic representative in the homotopy class of $\zeta$ relative to the geodesic boundaries of $ V(\lambda_1,\lambda_2,\lambda_3)$, i.e. the endpoints $M,N$ stay at the  geodesic boundaries  during the homopoty process.
 Let $l(\lambda_1,\lambda_2,\lambda_3,[\zeta])$ be the length of $g_\zeta$.
 Let $h_{\Lambda}$ (resp. $h_0$) be  the length of $h_\zeta$ on $ V(\lambda_1,\lambda_2,\lambda_3)$ (resp. $ V(\lambda_1,\lambda_2,0)$ or $ V(\lambda_1,0,0)$, up to the situation in consideration).

 Let the geodesic boundary $\Delta_1,\Delta^N$ be oriented such that $ V(\lambda_1,\lambda_2,\lambda_3)$ sits on the right of $\Delta_1$ and on the left of $\Delta^N$. Since $g_\zeta$ and $h_\zeta$ are homotopic relative to the boundaries of $V(\lambda_1,\lambda_2,\lambda_3)$, there  is a homotopy $H$ between $g_\zeta$ and $h_\zeta$ on $V(\lambda_1,\lambda_2,\lambda_3)$. Denote by $\rho_M$ and $\rho_N$ be the signed displacements of $h_\zeta\cap \Delta_1$ and $h_\zeta\cap\Delta^N$ during the homotopy process.

 \remark The assumption $\Lambda=(\lambda_1,\lambda_2,\lambda_3)\in (0,\infty)\times (-\pi, \infty)\times(-\pi,\infty)$ guarantees  that $\Delta_1$ is a geodesic boundary. The reason for this assumption is that the surface we are interested  is not a sphere with three holes, which means that each pair of pants on the surface has at least one boundary component which is an non-peripheral simple closed curve.

\begin{figure}
  \subfigure[]
    {
    \begin{minipage}[tbp]{50mm}
      \includegraphics[width=50mm]{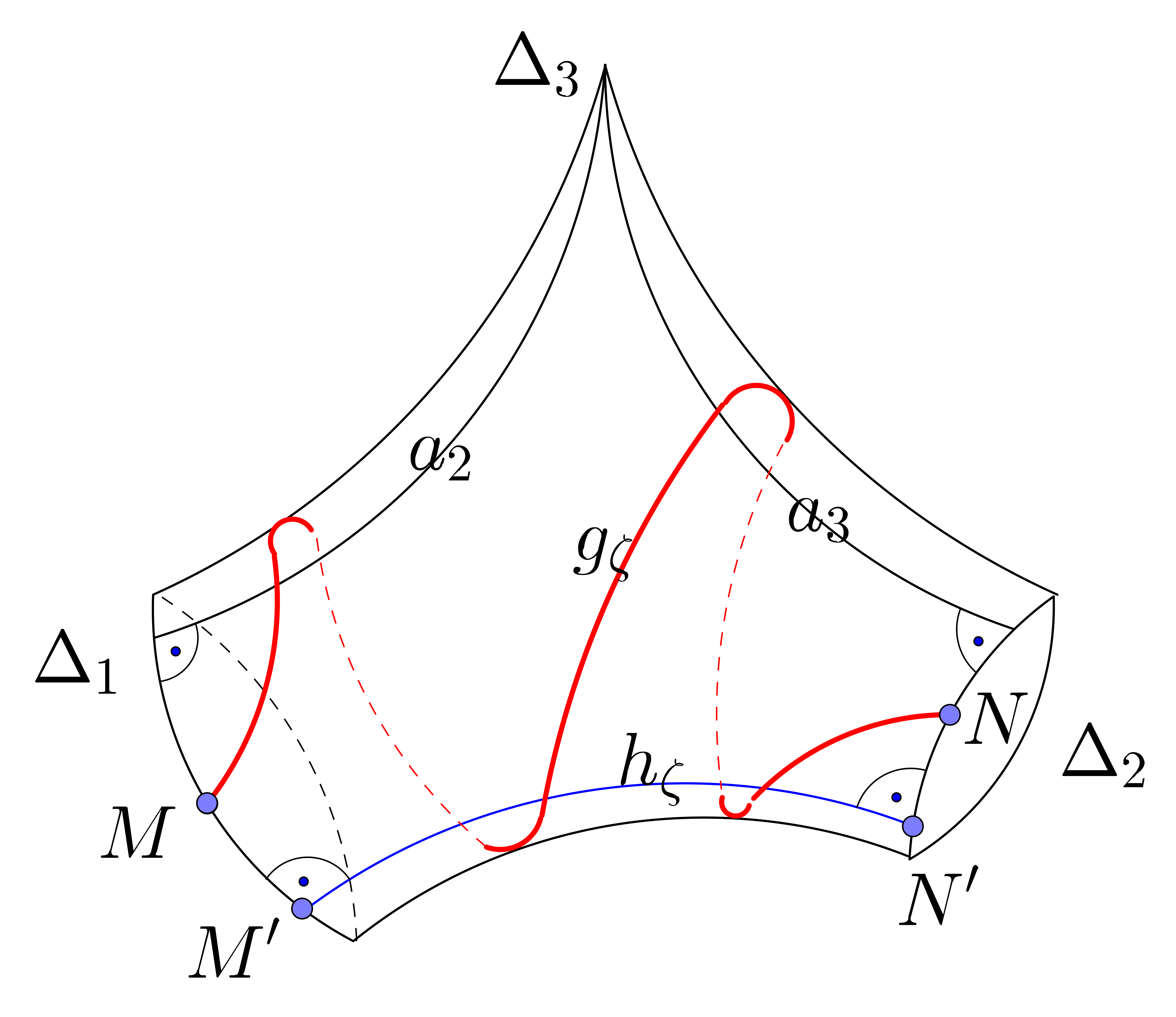}
    \label{fig:Vcompare1}
    \end{minipage}
    }
  \subfigure[]
  {
    \begin{minipage}[tbp]{50mm}
      \includegraphics[width=50mm]{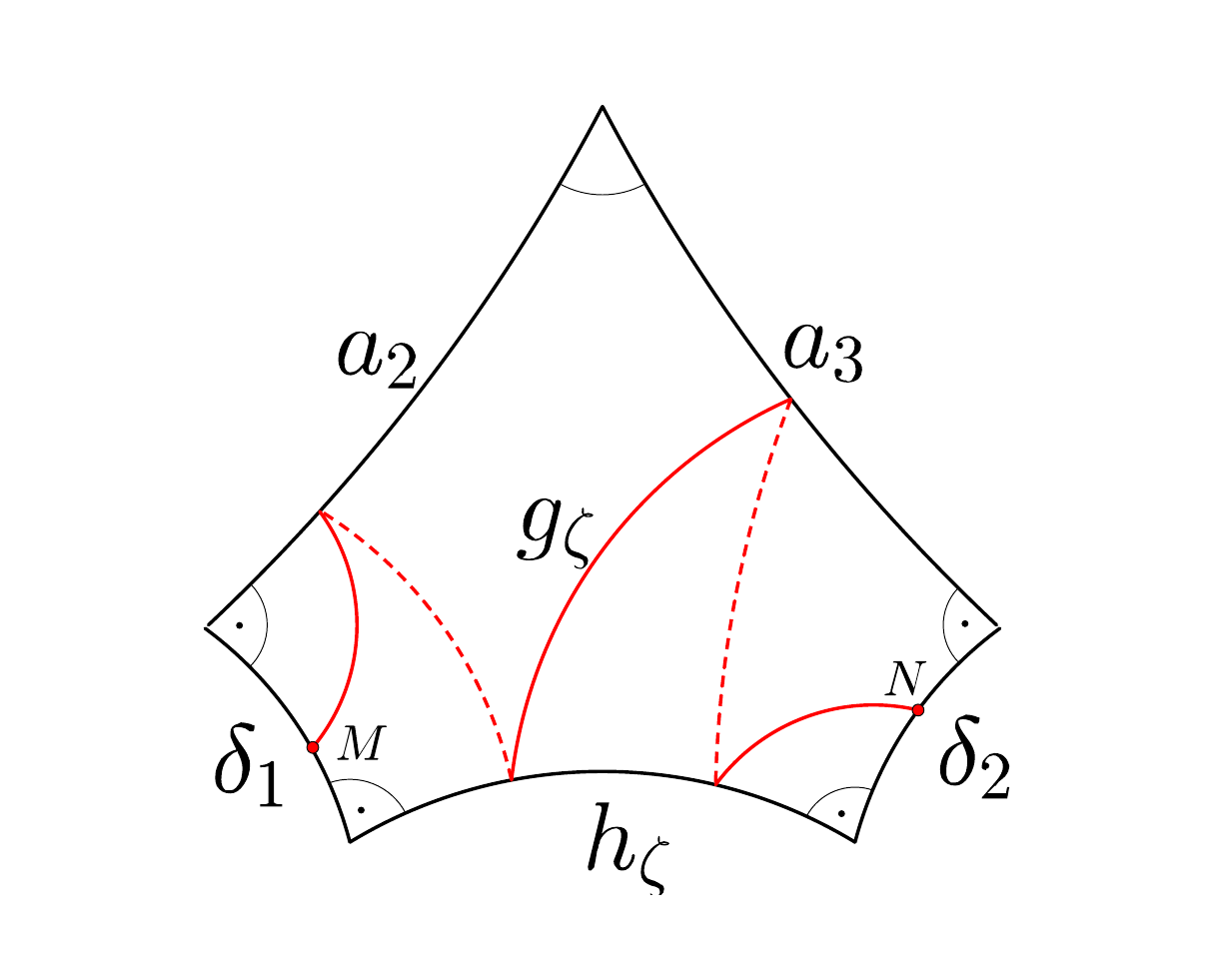}
    \label{fig:V-pen-1}
    \end{minipage}
    }

  \subfigure[]
  {
  \begin{minipage}[tbp]{50mm}
  \includegraphics[width=50mm]{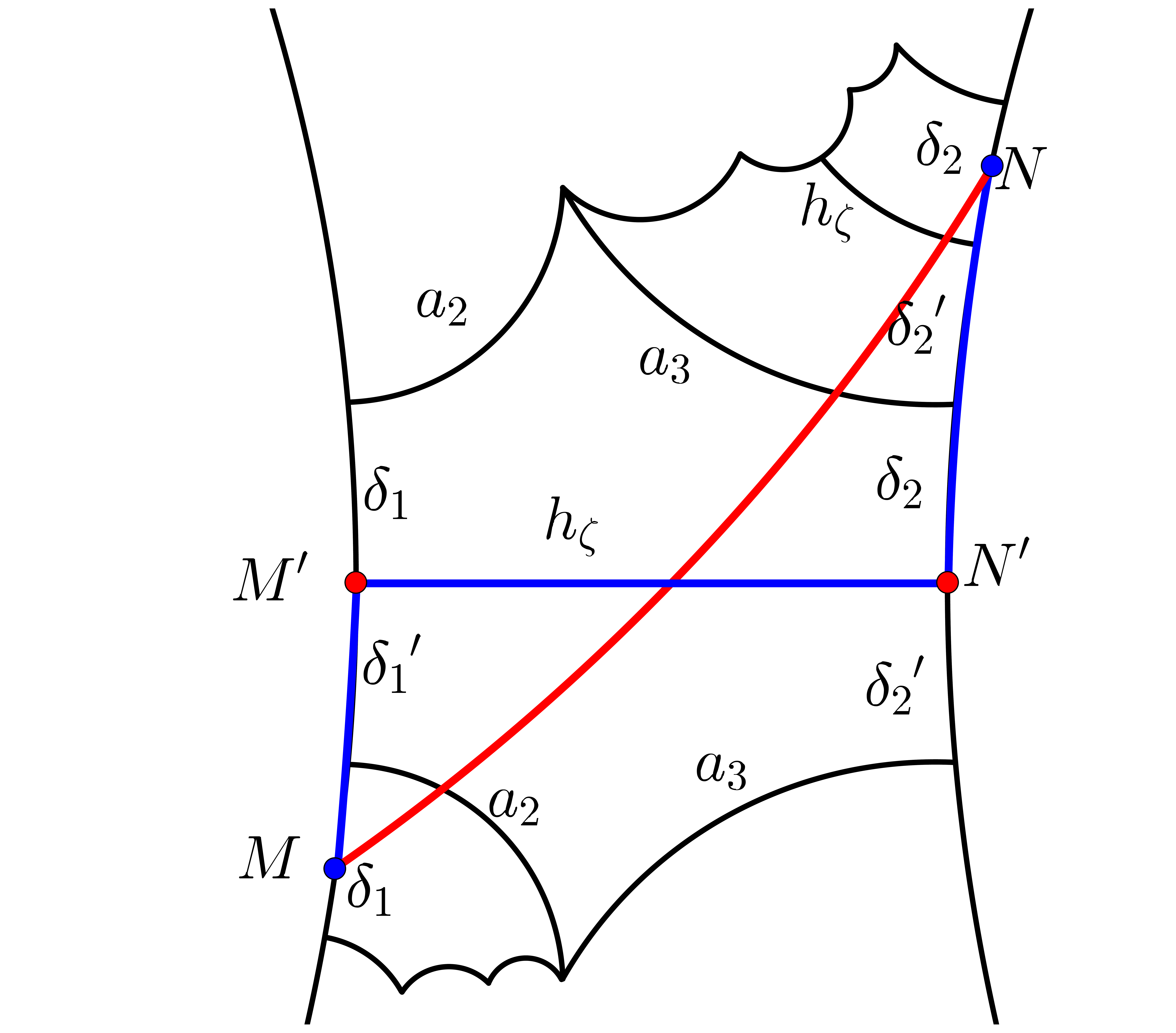}
  \label{fig:eta1}
 \end{minipage}
  }
  \caption{Figure (c) pictures the extension of $g_\zeta$ to the hyperbolic plane. The V-piece in Figure (a) consists of two isometric pentagons in Figure (b). If we record the edges of these two pentagons meet by $g_\zeta$ during the extension, the geodesic $g_\zeta$ in Figure (a) corresponds to a sequence $(\delta_1,a_2,h_\zeta,a_3,h_\zeta,\delta_2)$.}
\end{figure}
\begin{lemma}\label{lem:eta}
  With notations above, we have
  $$           \cosh l(\lambda_1,\lambda_2,\lambda_3,[\zeta])= \cosh{\rho_M}\cosh{\rho_N}\cosh h_\Lambda -\sinh \rho_M \sinh \rho_N.
 $$
\end{lemma}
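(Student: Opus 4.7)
The identity to prove has exactly the shape of the hyperbolic quadrilateral formula \eqref{eq:dist} in Lemma \ref{lem:ele-formular}, so the plan is to reduce the assertion to that formula by unfolding the generalized Y-piece in $\mathbb{H}^2$.

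First I would record the geometry of $h_\zeta$. By Proposition \ref{prop:elementary}(c), the length minimizer $h_\zeta$ in the homotopy class of $\zeta$ (with endpoints free to move on the boundary geodesics) is perpendicular to $\Delta_1$ and to $\Delta^N$, and by definition its length equals $h_\Lambda$. In particular, $h_\zeta$ meets each of the two geodesic boundaries at a well-defined foot-point.

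Next I would lift the picture to $\mathbb{H}^2$. The regular part of $V(\lambda_1,\lambda_2,\lambda_3)$ embeds isometrically into $\mathbb{H}^2$ via the developing map (the pentagonal/hexagonal decomposition of the piece indicated in the figure caption gives an explicit unfolding). Choose a lift $\tilde h_\zeta$ of $h_\zeta$: it is a geodesic segment of length $h_\Lambda$ meeting two disjoint complete geodesics $\tilde\Delta_1$ and $\tilde\Delta^N$ (lifts of $\Delta_1$ and $\Delta^N$) at right angles. The homotopy $H$ between $g_\zeta$ and $h_\zeta$ lifts to a homotopy starting at $\tilde h_\zeta$ and ending at a geodesic lift $\tilde g_\zeta$ of $g_\zeta$, whose endpoints $\tilde M, \tilde N$ lie on the very same lifts $\tilde \Delta_1$ and $\tilde \Delta^N$. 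By the definitions of $\rho_M$ and $\rho_N$, the point $\tilde M$ lies on $\tilde\Delta_1$ at signed distance $\rho_M$ from $\tilde h_\zeta \cap \tilde\Delta_1$, and the point $\tilde N$ lies on $\tilde\Delta^N$ at signed distance $\rho_N$ from $\tilde h_\zeta \cap \tilde\Delta^N$.

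The four points $\tilde h_\zeta \cap \tilde\Delta_1$, $\tilde M$, $\tilde N$, $\tilde h_\zeta \cap \tilde\Delta^N$ are the vertices of a hyperbolic quadrilateral in $\mathbb{H}^2$ with right angles at the two feet of $\tilde h_\zeta$; the side of this quadrilateral opposite to $\tilde h_\zeta$ is $\tilde g_\zeta$, of length $l(\lambda_1,\lambda_2,\lambda_3,[\zeta])$. Applying \eqref{eq:dist} with $c = h_\Lambda$, $\rho_1 = \rho_M$, $\rho_2 = \rho_N$ yields the claimed identity.

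The principal obstacle is purely a bookkeeping one: verifying that the signs of $\rho_M$ and $\rho_N$ (fixed by the orientation convention that $V(\lambda_1,\lambda_2,\lambda_3)$ lies on the right of $\Delta_1$ and on the left of $\Delta^N$) are compatible with the orientation choices used in \eqref{eq:dist}, so that the $-\sinh\rho_M\sinh\rho_N$ term appears with the correct sign regardless of whether $\tilde M, \tilde N$ sit on the same or on opposite sides of their respective feet. This is a direct check once the oriented lifts are drawn; in particular it handles the multi-arc case suggested by the figure, in which $g_\zeta$ crosses $h_\zeta$ several times, because each such crossing only increases the magnitude of one of the displacements without altering the quadrilateral structure in the universal cover.
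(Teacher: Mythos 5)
Your proposal is correct and follows essentially the same route as the paper: the paper also unfolds $g_\zeta$ into the hyperbolic plane via the pentagonal decomposition of the generalized Y-piece (Fig.~\ref{fig:eta1}) and then reads the identity off from the two-right-angled quadrilateral formula \eqref{eq:dist}, with $c=h_\Lambda$ and $\rho_1=\rho_M$, $\rho_2=\rho_N$. Your additional remarks on the sign bookkeeping for $\rho_M,\rho_N$ and on multiple crossings only make explicit what the paper leaves to the figure.
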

\begin{proof}
 We consider the case that $\Lambda=(\lambda_1,\lambda_2,\lambda_3)\in (0,\infty)\times (0, \infty)\times(-\pi,\infty)$, the proofs for the remaining cases are similar.

 Recall that $V(\lambda_1,\lambda_2,\lambda_3)$ consists of two isometric pentagons pictured in Figure \ref{fig:V-pen-1}. 
 Extend the geodesic $g_\zeta$ on $V(\lambda_1,\lambda_2,\lambda_3)$  to the hyperbolic plane as illustrated in Fig.\ref{fig:eta1}. Then the lemma follows from equation (\ref{eq:dist}) for the quadrilateral.
\end{proof}
 The lemma below describes the comparisons of the geometry between a pair of general pants  and a pair of pants which do not contain any cone point.
\begin{lemma}\label{lem:Y-compare}
    With the notations above, we have the following estimates:

  \begin{enumerate}
    [(a)]
    \item If $\Delta_1,\Delta_2$ are geodesic boundaries, $\Delta_3$ is a generalized geodesic boundary, then  there exist $ C_1,D_1>0$, which depend only on $\lambda_3$, such that
        $$ |[l(\lambda_1,\lambda_2,\lambda_3,[\zeta])
        -l(\lambda_1,\lambda_2,0,[\zeta]))|
       \leq D_1,$$
       and
       $$\frac{1}{C_1}\leq \frac{l(\lambda_1,\lambda_2,\lambda_3,[\zeta])}
        {l(\lambda_1,\lambda_2,0,[\zeta])}\leq C_1.$$
        Moreover, $D_1\to0,C_1\to1$ as $\lambda_3\to0$.
    \item If $\Delta_1$ is a  geodesic boundary, $\Delta_2$ and $\Delta_3$ are generalized geodesic boundaries with the same type, then there exist $ C_2,D_2>0$, which depend only on $\lambda_2,\lambda_3$, such that
        $$ |l(\lambda_1,\lambda_2,\lambda_3,
        [\zeta])  -l(\lambda_1,0,0,[\zeta])|
       \leq D_2,$$
       and
       $$\frac{1}{C_2}\leq \frac{l(\lambda_1,\lambda_2,\lambda_3,[\zeta])}
        {l(\lambda_1,0,0,[\zeta])}
        \leq {C_2}.$$
        Moreover, $D_2\to0,C_2\to1$ as $\lambda_2,\lambda_3\to0$.
    \item If $\Delta_1$ is a  geodesic boundary, $\Delta_2$ and $\Delta_3$ are generalized geodesic boundaries with different types,  then there exist $ C_3,D_3>0$, which depend only on $\lambda_2,\lambda_3$, such that
        $$  |l(\lambda_1,\lambda_2,\lambda_3,[\zeta]) -l(\lambda_1,0,0,[\zeta])|
       \leq D_3,$$
       and
       $$\frac{1}{C_3}\leq \frac{l(\lambda_1,\lambda_2,\lambda_3,[\zeta])}
        {l(\lambda_1,0,0,[\zeta])}\leq {C_3}.$$
        Moreover, $D_3\to0,C_3\to1$, as $\lambda_2\to0,\lambda_3\to0$.
  \end{enumerate}
\end{lemma}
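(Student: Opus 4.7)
The plan is to reduce each case to Lemma~\ref{lemma:comparisons} by applying Lemma~\ref{lem:eta} in both the original Y-piece $V(\lambda_1,\lambda_2,\lambda_3)$ and the corresponding degenerated Y-piece (with $\Delta_3$ replaced by a cusp in case (a), and with both $\Delta_2,\Delta_3$ replaced by cusps in cases (b), (c)). This yields
\[
\cosh l(\lambda_1,\lambda_2,\lambda_3,[\zeta]) = f(h_\Lambda,\rho_M^\Lambda,\rho_N^\Lambda), \qquad \cosh l(\lambda_1,\lambda_2',\lambda_3',[\zeta]) = f(h_0,\rho_M^0,\rho_N^0),
\]
where $f$ is the function from Lemma~\ref{lemma:comparisons}, $h_\Lambda, h_0$ are the lengths of the common perpendicular $h_\zeta$ in the two Y-pieces, and $\rho_M^\bullet, \rho_N^\bullet$ are the associated signed displacements.

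The first technical step is to control the ratio $(\cosh h_\Lambda - 1)/(\cosh h_0 - 1)$. In case (a) the hexagon identity (\ref{eq:Hex}) gives
\[
\cosh h_\Lambda = \frac{\cosh(\lambda_3/2) + \cosh(\lambda_1/2)\cosh(\lambda_2/2)}{\sinh(\lambda_1/2)\sinh(\lambda_2/2)},
\]
and using $\cosh a\cosh b - \sinh a \sinh b = \cosh(a-b)$ one finds
\[
\frac{\cosh h_\Lambda - 1}{\cosh h_0 - 1} = \frac{\cosh(\lambda_3/2) + \cosh((\lambda_1 - \lambda_2)/2)}{1 + \cosh((\lambda_1-\lambda_2)/2)} \in \bigl[1,\ \tfrac{\cosh(\lambda_3/2)+1}{2}\bigr],
\]
so the ratio is pinched by constants depending only on $\lambda_3$ and tending to $1$ as $\lambda_3 \to 0$. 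The cone-point and cusp variants of case (a) are handled similarly via (\ref{eq:Pen-1}) and (\ref{eq:Pen-3}); cases (b) and (c) are analogous but with two boundaries degenerated, yielding bounds depending on $\lambda_2, \lambda_3$.

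The second technical step is to bound the additive shifts $|\rho_M^\Lambda - \rho_M^0|$ and $|\rho_N^\Lambda - \rho_N^0|$ by constants depending only on the degenerating parameters and tending to $0$: these shifts equal the displacement of the foot of the common perpendicular $h_\zeta$ on $\Delta_1$ (respectively $\Delta^N$) as the metric degenerates, which can be extracted uniformly from the pentagon/hexagon formulas. Once both steps are in place, one combines them via the identity $f(c,\rho_1,\rho_2) = \cosh(\rho_1-\rho_2) + \cosh\rho_1\cosh\rho_2\,(\cosh c - 1)$ and the elementary estimate $\cosh(\bar\rho + \delta) \leq e^{|\delta|}\cosh\bar\rho$: Lemma~\ref{lemma:comparisons} handles the change in $c$ with $\rho_1,\rho_2$ held fixed, while the elementary estimate absorbs the bounded additive shifts in $\rho_M, \rho_N$ into multiplicative/additive constants on $\cosh l$, and the conclusion on $l$ itself follows from Lemma~\ref{lemma:comparison-1}. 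The main obstacle is the second step: showing that the shift in the foot of the common perpendicular is uniformly bounded in the non-degenerating boundary lengths $\lambda_1$ (and $\lambda_2$ in cases (b), (c)), since without this uniformity the constants $C_i, D_i$ would depend on all of $\Lambda$ rather than only on the degenerating parameters.
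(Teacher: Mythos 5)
Your core reduction is exactly the paper's proof: apply Lemma \ref{lem:eta} in both Y-pieces, use Lemma \ref{lemma:comparisons} to convert a two-sided bound on $(\cosh h_\Lambda-1)/(\cosh h_0-1)$ into the stated additive and multiplicative bounds on $l$, and compute that ratio case by case from the polygon identities. Your explicit hexagon computation for case (a) with $\Delta_3$ a geodesic boundary reproduces the paper's Case B verbatim, including the range $[1,\tfrac{1}{2}(1+\cosh(\lambda_3/2))]$ with constants depending only on $\lambda_3$. (The paper's six cases also split on whether $N$ lies on $\Delta_2$ or returns to $\Delta_1$, the latter handled via (\ref{eq:Pen-1}), (\ref{eq:tri-rec-0}) and the four-right-angled pentagons; your ``handled similarly'' covers this only in outline, but those computations have the same shape.)

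The genuine issue is your second step. As written, your argument is incomplete there: you assert that the shifts $|\rho_M^\Lambda-\rho_M^0|$, $|\rho_N^\Lambda-\rho_N^0|$ can be ``extracted uniformly from the pentagon/hexagon formulas,'' then name this as the main obstacle without proving it, and the uniformity in the non-degenerating lengths $\lambda_1$ (and $\lambda_2$) is precisely the point that would need an argument. But this step is an artifact of your setup rather than a feature of the lemma: the paper compares the two Y-pieces with the \emph{same} pair $(\rho_M,\rho_N)$, i.e.\ the arc in the degenerated piece is taken to have the same signed displacements from the feet of $h_\zeta$ as the arc in the original piece, so both lengths equal $f(\cdot,\rho_M,\rho_N)$ with identical $\rho$'s and Lemma \ref{lemma:comparisons} applies directly. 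That convention is the appropriate one for the only place the lemma is used, namely Theorem B, where one is free to \emph{choose} the comparison arc $\alpha^{XX'}$ (only the inequality $l_{X'}([\alpha])\le l_{X'}(\alpha^{XX'})$ is needed), and where the fact that $X$ and $F_{\Gamma,\mathcal B,\Lambda}(X)$ share boundary lengths and twist parameters --- themselves referenced to perpendicular feet --- is what lets displacement-matched arcs glue into a closed curve in the class $[\alpha]$. If you adopt that convention your second step disappears; if you insist on pinning $M,N$ at fixed arclength positions, you owe the uniform bound you flagged, and the paper does not supply it.
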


\remark
\begin{enumerate}[(i)]
  \item Here the  point is that the estimates 
      have nothing to do with the homotopy classes of $\zeta$.
  \item The estimates about the difference of corresponding lengths works for the situation where the lengths tend to infinity, while the estimates about the ratio of corresponding lengths works for the situation where the lengths tend to zero. Both of them will be used to prove the quasi-isometry of $\mathcal{T}_{g,n}(\Lambda)$ and  $\mathcal{T}_{g,n}(0)$  and to solve various boundary problems.
\end{enumerate}

\begin{proof}
  \begin{figure}
            \subfigure[]
    {
    \begin{minipage}[tbp]{50mm}
      \includegraphics[width=50mm]{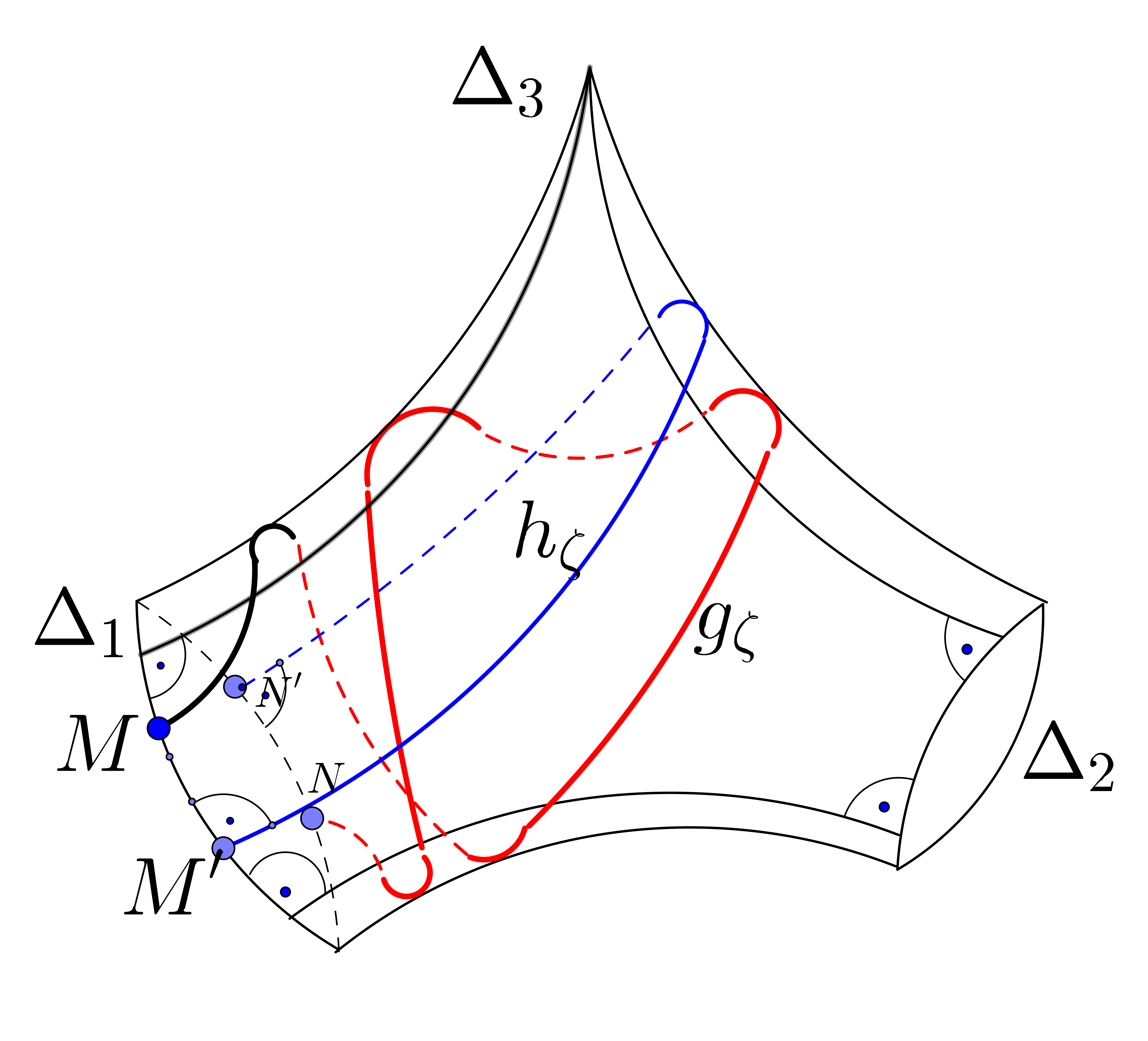}
    \label{fig:Vcompare2}
    \end{minipage}
    \begin{minipage}[tbp]{50mm}
      \includegraphics[width=50mm]{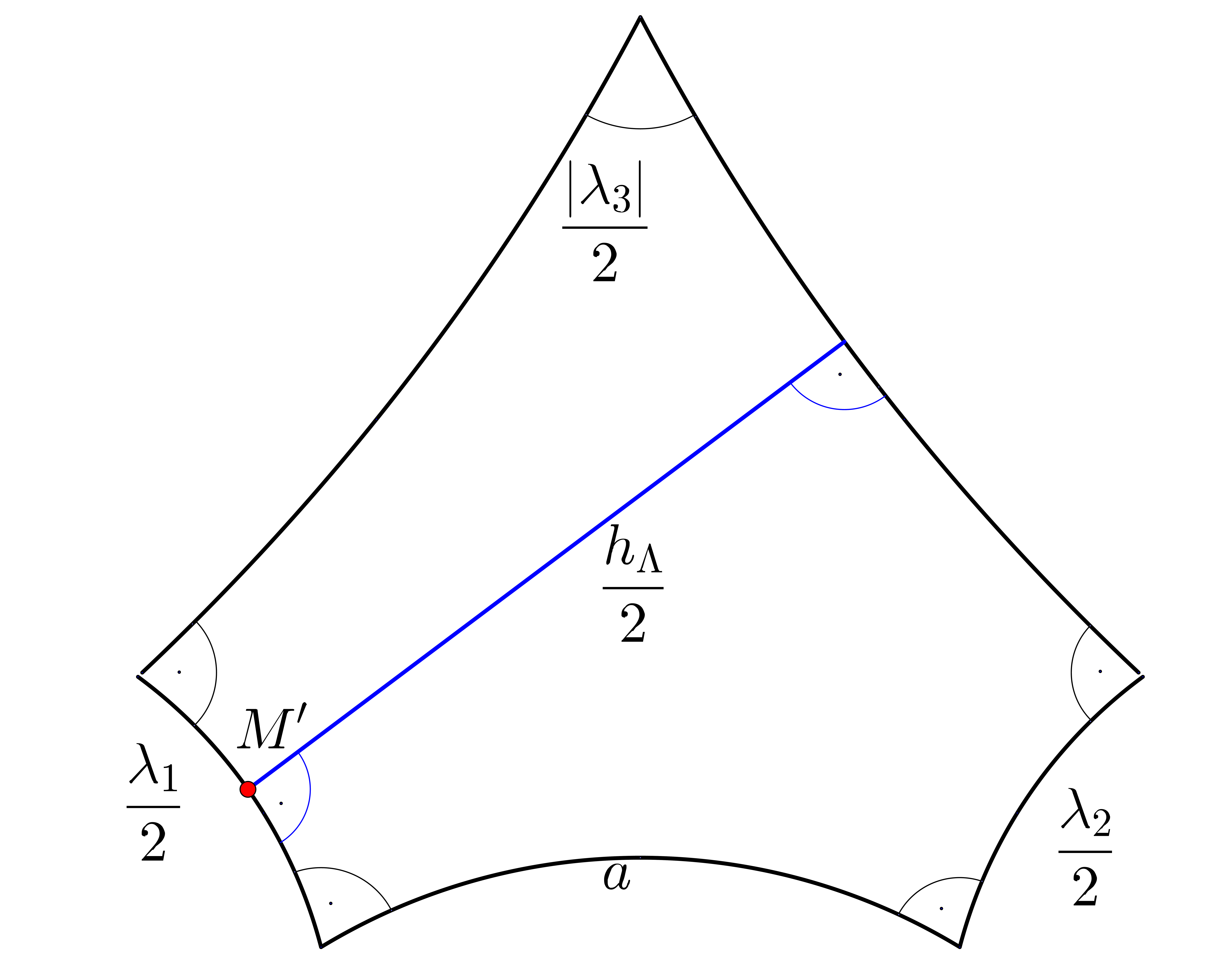}
    \label{fig:V-pen-2}
    \end{minipage}
    }

        \subfigure[]
    {
    \begin{minipage}[tbp]{50mm}
      \includegraphics[width=50mm]{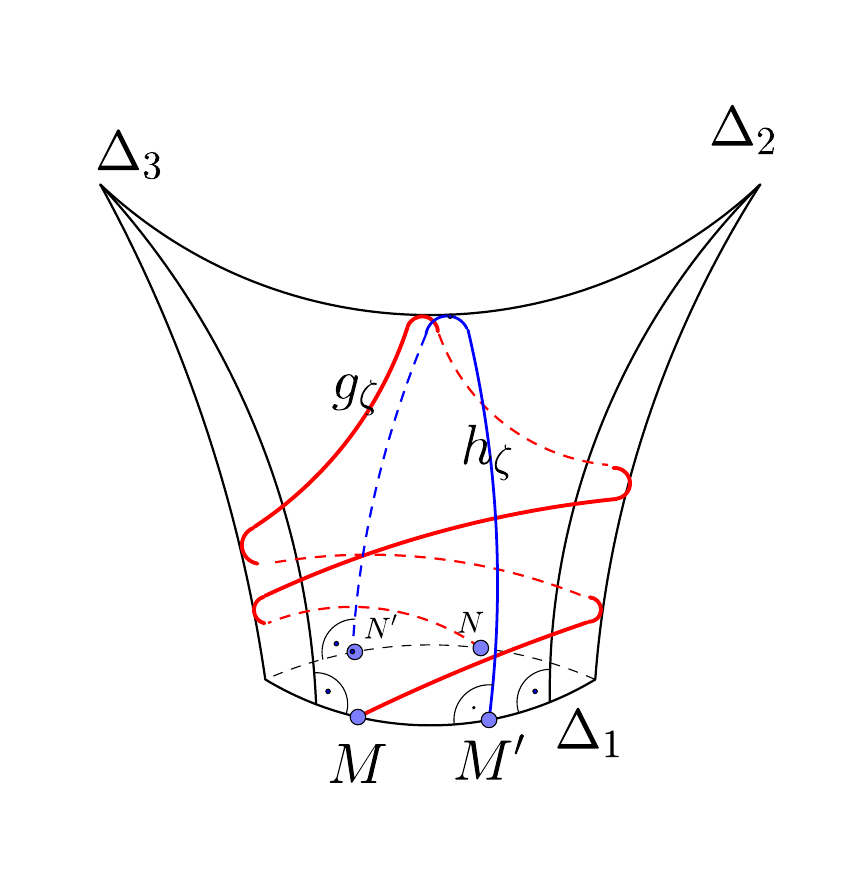}
    \label{fig:Jcompare}
    \end{minipage}
    \begin{minipage}[tbp]{50mm}
      \includegraphics[width=50mm]{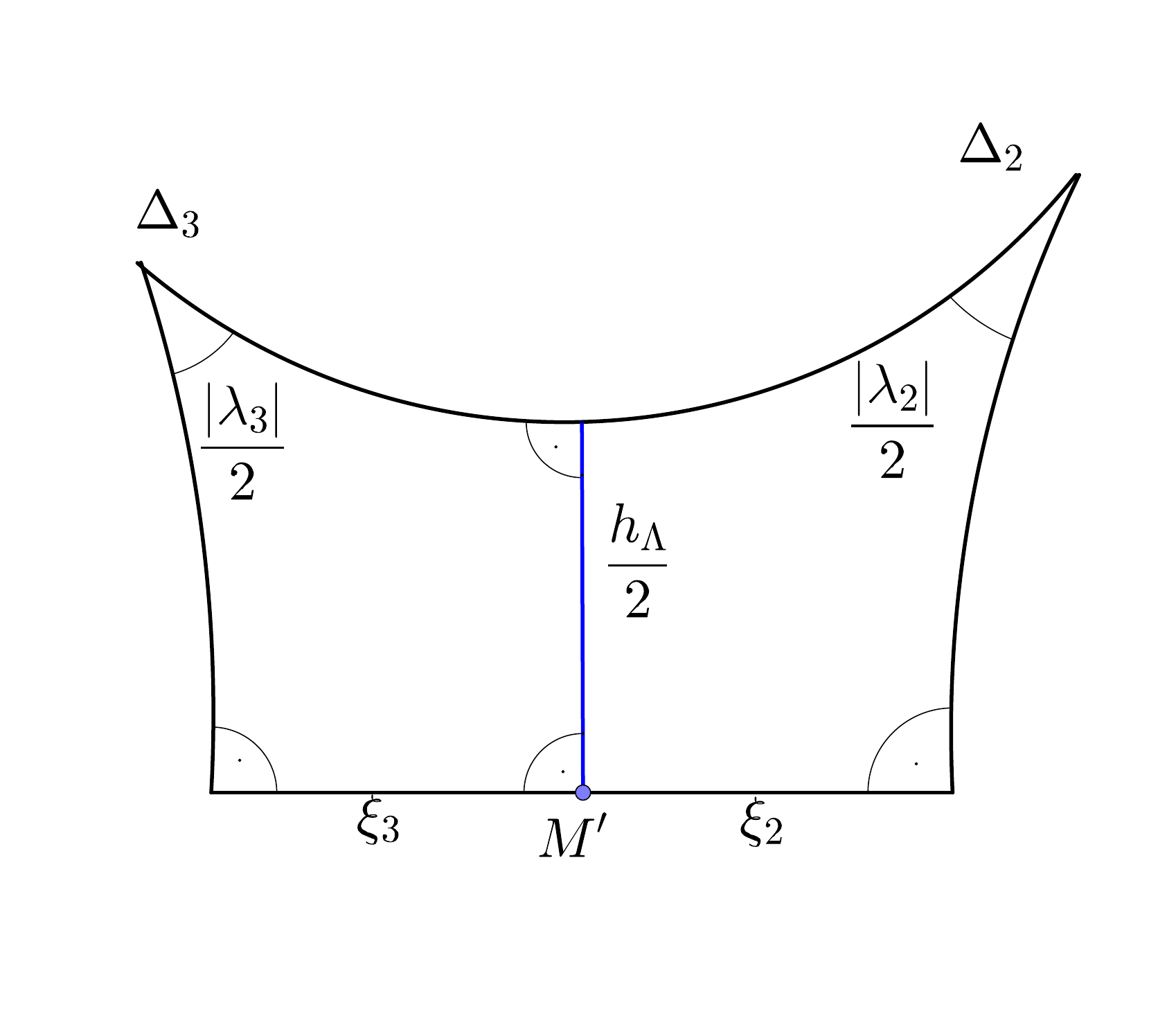}
    \label{fig:J-Quadri}
    \end{minipage}
    }
    \caption{The arc $\zeta$ in the  figures on the left are arbitrary geodesic arcs with endpoint on given boundary components, and $h_{\zeta}$ is the geodesic homotopic to $\zeta$ relative to the boundary components of $V$. Each figure on the right side is one of the two isometric polygons consisting the corresponding pair of pants on its left. }
    \label{fig:comp-Y-pieces}
  \end{figure}
 For simplicity, we  denote by $h_{\Lambda}$ (resp. $h_0$) the length of $h_\zeta$ on $ V(\lambda_1,\lambda_2,\lambda_3)$ (resp. $ V(\lambda_1,\lambda_2,0)$ or $ V(\lambda_1,0,0)$, up to the situation in consideration).

It follows from Lemma \ref{lem:eta} that
$$\left\{
\begin{array}{rcl}
  \cosh l(\lambda_1,\lambda_2,\lambda_3,[\zeta])&= & \cosh{\rho_M}\cosh{\rho_N}\cosh h_\Lambda -\sinh \rho_M \sinh \rho_N\\

 \cosh l(\lambda_1,\lambda_2,0,[\zeta])&= &\cosh{\rho_M}\cosh{\rho_N}\cosh h_0 -\sinh \rho_M \sinh \rho_N\\
\end{array}
\right.
$$ for statement (a), and
$$\left\{
\begin{array}{rcl}
  \cosh l(\lambda_1,\lambda_2,\lambda_3,[\zeta])&= & \cosh{\rho_M}\cosh{\rho_N}\cosh h_\Lambda -\sinh \rho_M \sinh \rho_N\\

 \cosh l(\lambda_1,0,0,[\zeta])&= &\cosh{\rho_M}\cosh{\rho_N}\cosh h_0 -\sinh \rho_M \sinh \rho_N\\
\end{array}
\right.
$$ for statement (b),(c).

  By Lemma~\ref{lemma:comparisons},  it suffices to prove that
   $$ \frac{1}{K} \leq\frac{\cosh h_{\Lambda}-1}{\cosh h_0-1}\leq K$$
for some $K$ that approaches 1 as  we take the limit for each statement in the lemma.

 We distinguish  six cases. The statement (a) is obtained from the cases A-D, and the statement (b) is obtained from the cases E and F. By statement (a) and statement (b), we have statement (c).

\vskip 10pt
\textbf {Case} A.  $\Delta_1,\Delta_2$ are geodesic boundaries, $\Delta_3$ is a cone point or a cusp and $N\in\Delta_2$ (see Fig.~\ref{fig:Vcompare1}).
\vskip 5pt
Using equation (\ref{eq:Pen-3}) for the pentagon pictured in Fig.~\ref{fig:V-pen-1}, we have
$$ \cosh h_{\Lambda}=\frac{\cosh \frac{\lambda_1}{2}\cosh \frac{\lambda_2}{2}+\cos \frac{|\lambda_3|}{2}}{\sinh \frac{\lambda_1}{2}\sinh \frac{\lambda_2}{2}},$$
where ${\lambda_1}$ and ${\lambda_2}$ are the lengths of the geodesic boundaries $\Delta_1$ and $\Delta_2$ respectively, $|\lambda_3|$ is the cone angle of $\Delta_3$.
Hence
\begin{eqnarray*}
  \frac{\cosh h_{\Lambda}-1}{\cosh h_0-1}=\frac{\cosh\frac{\lambda_1-\lambda_2}{2}+\cos\frac{|\lambda_3|}{2}
  }{\cosh\frac{\lambda_1-\lambda_2}{2}+1}
  \in[\frac{1+\cos\frac{|\lambda_3|}{2}}{2},1].
\end{eqnarray*}

\vskip 20pt

\textbf {Case} B. $\Delta_1,\Delta_2$ are geodesic boundaries, $\Delta_3$ is a geodesic boundary  and $N\in\Delta_2$.
\vskip 5pt
Replace  $\cos \frac{|\lambda_3|}{2}$ in {case} A  by $\cosh \frac{|\lambda_3|}{2}$, we have
\begin{eqnarray*}
  \frac{\cosh h_{\Lambda}-1}{\cosh h_0-1} &=&\frac{\cosh\frac{\lambda_1-\lambda_2}{2}+\cosh\frac{|\lambda_3|}{2}}
         {\cosh\frac{\lambda_1-\lambda_2}{2}+1}
            \in[1,\frac{1+\cosh\frac{|\lambda_3|}{2}}{2}].
\end{eqnarray*}
\vskip 20pt

\textbf {Case} C. $\Delta_1,\Delta_2$ are geodesic boundaries, $\Delta_3$ is a cone point or a cusp and $N\in\Delta_1$ (see Fig.~\ref{fig:Vcompare2}).
\vskip 5pt
 Using equation (\ref{eq:Pen-1}) for the right angled pentagon pictured in Fig.~\ref{fig:V-pen-2}, we have
$$ \cosh \frac{h_{\Lambda}}{2}=\sinh \frac{\lambda_2}{2} \sinh a ,$$
where $a$ is the length of the geodesic perpendicular to $\Delta_1$ and $\Delta_2$.
\begin{eqnarray*}
  \cosh h_{\Lambda}-1
  &=&2\cosh^2{\frac{h_{\Lambda}}{2}}-2\\
  &=&2\frac{\cosh^2 \frac{\lambda_2}{2}+
        2\cosh \frac{\lambda_1}{2}\cosh \frac{\lambda_2}{2}\cos \frac{|\lambda_3|}{2}+\cos^2\frac{|\lambda_3|}{2}}{\sinh^2 \frac{\lambda_1}{2}},
\end{eqnarray*}
and
\begin{eqnarray*}
  \frac{\cosh h_{\Lambda}-1}{\cosh h_0-1}
  &=& \frac{\cosh^2 \frac{\lambda_2}{2}+
        2\cosh \frac{\lambda_1}{2}\cosh \frac{\lambda_2}{2}\cos \frac{|\lambda_3|}{2}+\cos^2\frac{|\lambda_3|}{2}}{\cosh^2 \frac{\lambda_2}{2}+
        2\cosh \frac{\lambda_1}{2}\cosh \frac{\lambda_2}{2}+1}\\
  &\in& [\cos^2\frac{|\lambda_3|}{2},1].
\end{eqnarray*}
\vskip 20pt

\textbf {Case} D. $\Delta_1,\Delta_2$ are geodesic boundaries,  $\Delta_3$ is a geodesic boundary  and $N\in\Delta_1$.
\vskip 5pt
Replace  $\cos \frac{|\lambda_3|}{2}$ in {case} C  by $\cosh \frac{|\lambda_3|}{2}$, we have
\begin{eqnarray*}
  \frac{\cosh h_{\Lambda}-1}{\cosh h_0-1}
  &=& \frac{\cosh^2 \frac{\lambda_2}{2}+
        2\cosh \frac{\lambda_1}{2}\cosh \frac{\lambda_2}{2}\cosh \frac{|\lambda_3|}{2}+\cosh^2\frac{|\lambda_3|}{2}}{\cosh^2 \frac{\lambda_2}{2}+
        2\cosh \frac{\lambda_1}{2}\cosh \frac{\lambda_2}{2}+1}\\
  &\in& [1,\cosh^2\frac{|\lambda_3|}{2}].
\end{eqnarray*}

\vskip 20pt

\textbf {Case} E. $\Delta_1$ is a geodesic boundary, $\Delta_2,\ \Delta_3$ are  cone points or cusps, and $N\in\Delta_1$ (see Fig.~\ref{fig:Jcompare}).
\vskip 5pt
We choose a quadrilateral with two adjacent vertices at $\Delta_1$ and $\Delta_2$, and two right angles at the endpoints of the opposite side $\xi$ contained in $\Delta_1$ (see the right figure of Fig.~\ref{fig:J-Quadri}).
Let $\xi_2,\xi_3$ be the subsides of $\xi$ divided by $h_{\zeta}$.
From ~(\ref{eq:tri-rec-0}) in Lemma~\ref{lem:ele-formular}, we have
$$\cos \frac{|\lambda_2|}{2}= \sinh \frac{h_{\Lambda}}{2}
  \sinh |\xi_2|,\
\cos \frac{|\lambda_3|}{2}= \sinh \frac{h_{\Lambda}}{2}
  \sinh |\xi_3|. $$
  Set $l_{\Lambda}\triangleq (\sinh\frac{h_{\Lambda}}{2})^{-1} ,$ then
$$\sinh |\xi_2|=l_{\Lambda}\cos \frac{|\lambda_2|}{2},
\ \sinh |\xi_3|=l_{\Lambda}\cos \frac{|\lambda_3|}{2}.$$
Note that $|\xi_2|+|\xi_3|=\lambda_1/2,$ hence
$$\sinh \frac{\lambda_1}{2}=
l_{\Lambda}\cos \frac{|\lambda_2|}{2}
[1+l^2_{\Lambda}\cos^2 \frac{|\lambda_3|}{2}]^{1/2}
+
l_{\Lambda}\cos \frac{|\lambda_3|}{2}
[1+l^2_{\Lambda}\cos^2 \frac{|\lambda_2|}{2}]^{1/2}. $$

Set $$K\triangleq \max\{\cos \frac{|\lambda_2|}{2},\cos \frac{|\lambda_3|}{2}\},\ k\triangleq\min\{\cos \frac{|\lambda_2|}{2},\cos \frac{|\lambda_3|}{2} \}.$$
Then we get
$$
2kl_{\Lambda}[1+k^2l^2_{\Lambda}]^{1/2}
\leq
\sinh \frac{\lambda_1}{2}
\leq
2Kl_{\Lambda}[1+K^2l^2_{\Lambda}]^{1/2},
$$
which means that
$$
\frac{\sinh \frac{\lambda_1}{4}}{K}
\leq
l_{\Lambda}
\leq
\frac{\sinh \frac{\lambda_1}{4}}{k}.
$$
In particular,
$$l_0=\sinh\frac{\lambda_1}{4}.$$
Therefore
\begin{eqnarray*}
  \frac{\cosh h_{\Lambda}-1}{\cosh h_0-1}
  &  =  &
  \frac{\sinh^2 \frac{h_{\Lambda}}{2}}{\sinh^2 \frac{h_{0}}{2}}
          =\frac{l^2_0}{l^2_{\Lambda}}\\
&\in& [k^2,K^2].
\end{eqnarray*}

\vskip 10pt
\textbf {Case} F. $\Delta_1$ is a geodesic boundary, $\Delta_2,\ \Delta_3$ are geodesic boundaries, and $N\in\Delta_1.$
\vskip 5pt
Replace $\cos \frac{|\lambda_2|}{2}$ and $\cos \frac{|\lambda_3|}{2}$ in case E by $\cosh \frac{|\lambda_2|}{2}$ and $\cosh \frac{|\lambda_3|}{2}$, we get
\begin{eqnarray*}
  \frac{\cosh h_{\Lambda}-1}{\cosh h_0-1}
  &  =  &
  \frac{\sinh^2 \frac{h_{\Lambda}}{2}}{\sinh^2 \frac{h_{0}}{2}}
          =\frac{l^2_0}{l^2_{\Lambda}}\\
&\in& [k^2,K^2],
\end{eqnarray*}
where
$$K\triangleq \max\{\cosh \frac{|\lambda_2|}{2},\cosh \frac{|\lambda_3|}{2}\},\ k\triangleq\min\{\cosh \frac{|\lambda_2|}{2},\cosh \frac{|\lambda_3|}{2} \}.$$

\vskip 10pt

\end{proof}
\remark The estimates in  Lemma \ref{lem:Y-compare} are optimal since all the equalities in the inequations can hold simultaneously.

\vskip 10pt
Next, we  compare the lengths of non-peripheral simple closed curves between hyperbolic cone surfaces based on Lemma~\ref{lem:Y-compare}.

\begin{figure}
  \subfigure[]
  {
  \includegraphics[width=35mm]{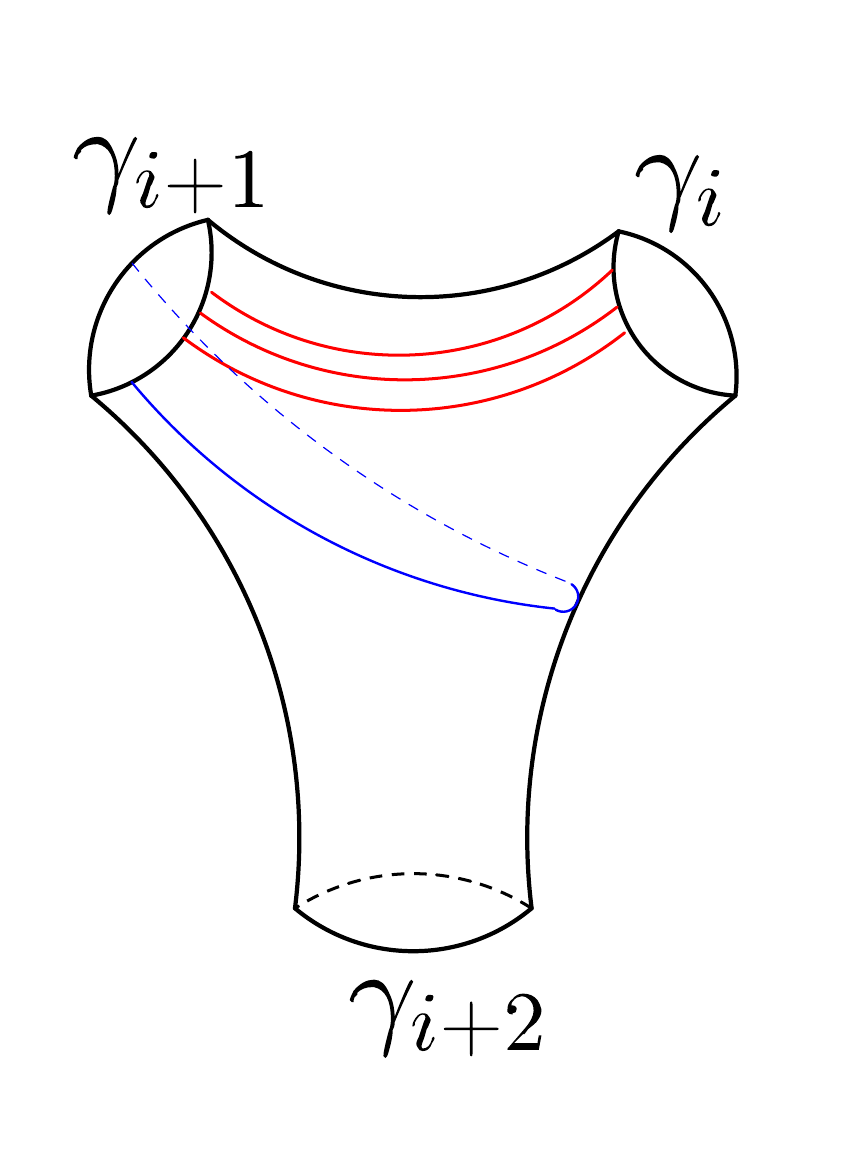}
  \label{fig:Y-arc}
  }
    \subfigure[]
  {
  \includegraphics[width=35mm]{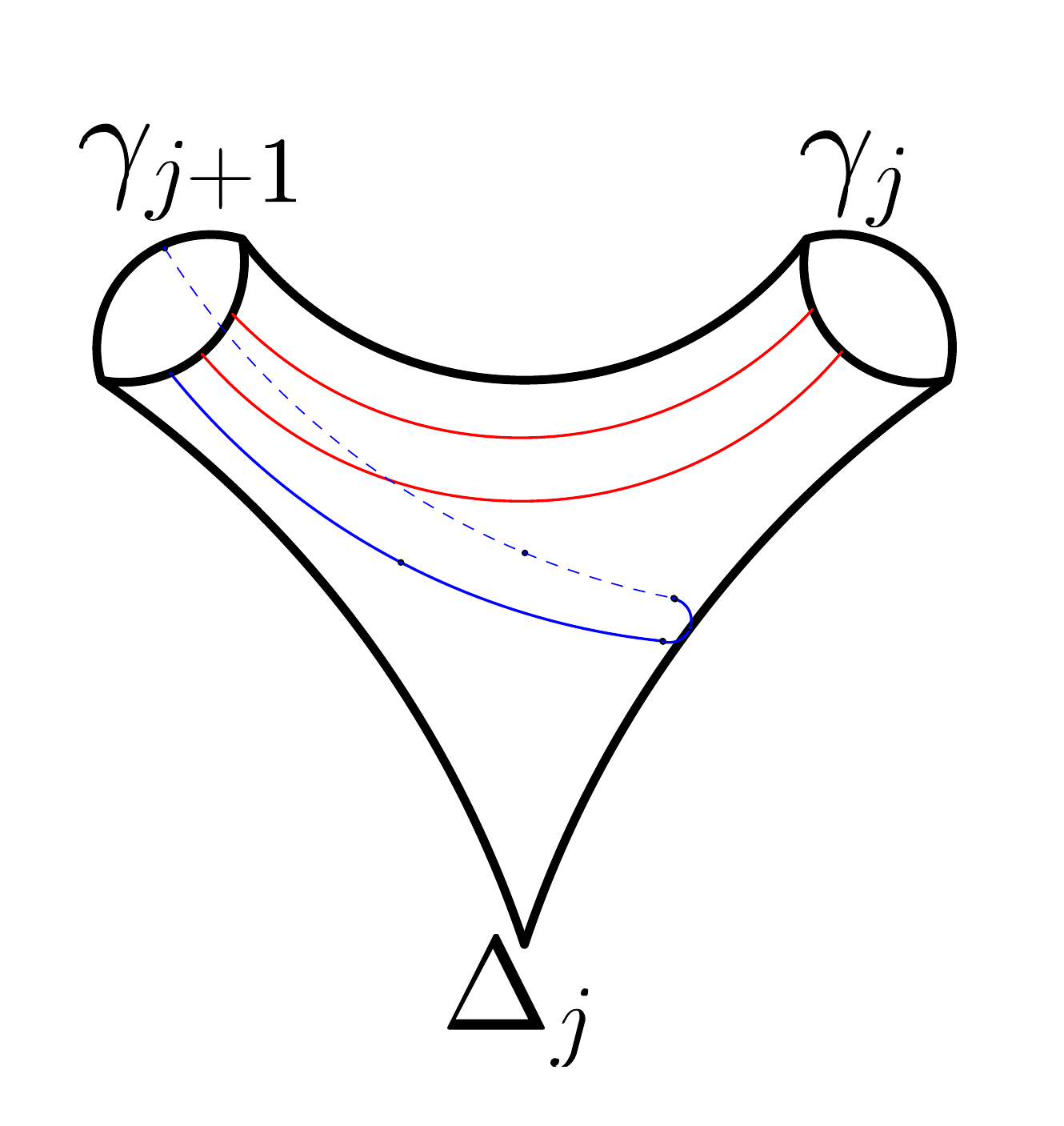}
  \label{fig:V-arc}
  }
    \subfigure[]
  {
  \includegraphics[width=35mm]{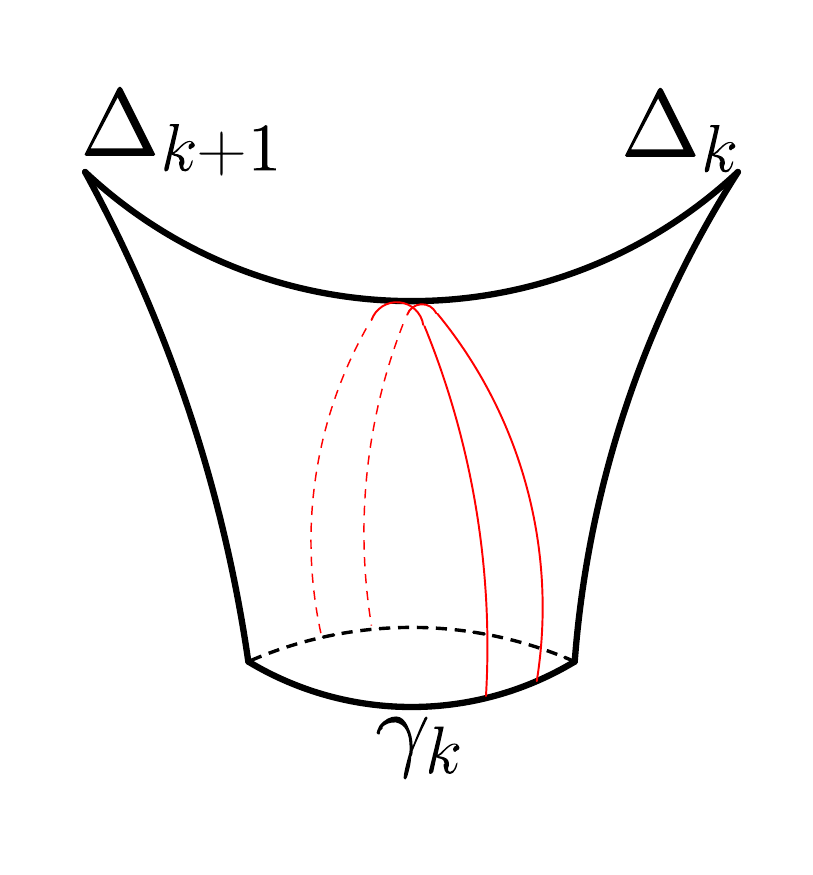}
  \label{fig:J-arc}
  }

  \caption{}
  \label{fig:arc}
\end{figure}
\begin{theoremb}[Length comparison inequalities]\label{thm:com-surface}
Let  $S$ be  a non-exceptional surface. There exist constants $C,D$  depending on $\Lambda$ such that for any $X\in \mathcal{T}_{g,n}(\Lambda)$, $X'=F_{\Gamma,\mathcal B,\Lambda}(X)$ and any isotopy class of non-peripheral simple closed curve $[\alpha]$,
  \begin{equation*}
   \begin{cases}
   |{l_X([\alpha])}-{l_{X'}([\alpha])}|\leq D\sum_{j=1}^{3g-3+n} i([\alpha],[\gamma_j]), \\
  \frac{1}{C}\leq \frac{l_X([\alpha])}{l_{X'}([\alpha])}\leq C,
  \end{cases}
  \end{equation*}
where  $i(\cdot,\cdot)$ is the geometric intersection number, and ${l_X([\alpha])}$ is the length of the geodesic representative in $[\alpha]$.  Moreover, $D\to 0,C\to 1$ as $\Lambda\to 0$.
 \end{theoremb}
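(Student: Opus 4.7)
The plan is to localize the comparison to individual pants pieces of $\Gamma$ and invoke Lemma \ref{lem:Y-compare} uniformly. If $[\alpha]$ is isotopic to some $[\gamma_j]$, then $l_X(\alpha) = l_{X'}(\alpha)$ (pants lengths are preserved by $F_{\Gamma,\mathcal B,\Lambda}$) and every intersection number with $\Gamma$ vanishes, so both inequalities hold trivially with $C=1$. Otherwise, let $\alpha^X$ denote the geodesic representative of $[\alpha]$ on $X$; by the bigon-criterion of Proposition \ref{prop:elementary}(b), $\alpha^X$ meets $\Gamma$ transversally in exactly $N := \sum_j i([\alpha],[\gamma_j])$ points, which cut $\alpha^X$ into geodesic arcs $\zeta_1^X,\ldots,\zeta_N^X$, each a geodesic rel its endpoints inside a single pair of pants. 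In particular $l_X(\alpha) = \sum_i |\zeta_i^X|$, with each $|\zeta_i^X|$ of the form $l(\lambda_{j_1},\lambda_{j_2},\lambda_{j_3},[\zeta_i])$ in the notation of Lemma \ref{lem:Y-compare}.

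The next step is to transfer each arc to $X'$. Because $F_{\Gamma,\mathcal B,\Lambda}$ preserves both the pants lengths $L$ and the twist parameters $T$, I can parameterize each $\gamma_j$ by arc length with a common basepoint on $X$ and $X'$; this lets me define $\zeta_i^{X'}$ as the geodesic rel the corresponding endpoints on $X'$ in the same pants arc homotopy class rel the pants boundary, so that $|\zeta_i^{X'}| = l(\lambda_{j_1},\lambda_{j_2},0,[\zeta_i])$. The concatenation $\alpha'$ of the $\zeta_i^{X'}$ is a piecewise geodesic loop on $X'$, and because the pants arc classes and the twists at each $\gamma_j$ match those determined by $\alpha^X$, it is freely homotopic to the geodesic $\alpha^{X'}$; hence $l_{X'}(\alpha) \leq |\alpha'|$. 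Applying Lemma \ref{lem:Y-compare} (the appropriate case (a), (b) or (c) depending on the type of pants piece) yields constants $D_\Lambda, C_\Lambda$ depending only on $\Lambda$ (take the maxima of $D_1,D_2,D_3$ and of $C_1,C_2,C_3$ over the finitely many pants types) with $\bigl||\zeta_i^X| - |\zeta_i^{X'}|\bigr| \leq D_\Lambda$ and $|\zeta_i^X|/|\zeta_i^{X'}| \in [C_\Lambda^{-1}, C_\Lambda]$ for every $i$; summing over $i$ gives $\bigl|l_X(\alpha) - |\alpha'|\bigr| \leq N D_\Lambda$ and $l_X(\alpha)/|\alpha'| \in [C_\Lambda^{-1}, C_\Lambda]$.

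Combining, $l_{X'}(\alpha) \leq |\alpha'| \leq l_X(\alpha) + N D_\Lambda$, which is half of the additive inequality. The reverse inequality is obtained by swapping the roles of $X$ and $X'$: start from $\alpha^{X'}$, cut into the same number $N$ of arcs (the topological intersection number is invariant), and transfer back to a loop $\alpha''$ on $X$ satisfying $l_X(\alpha) \leq |\alpha''| \leq l_{X'}(\alpha) + N D_\Lambda$. The multiplicative bound follows in parallel from the ratio estimate. Finally, the limiting statements $D_i\to 0$ and $C_i\to 1$ as $\Lambda\to 0$ in Lemma \ref{lem:Y-compare} immediately give $D_\Lambda\to 0$ and $C_\Lambda\to 1$. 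The only delicate point is verifying that $\alpha'$ lies in the isotopy class of $\alpha$; I expect this to reduce to the standard fact that the isotopy class of a simple closed curve is determined, given $\Gamma$ and $\mathcal B$, by its arc homotopy classes in each pants together with the twist data at each $\gamma_j$, both of which are preserved by $F_{\Gamma,\mathcal B,\Lambda}$.
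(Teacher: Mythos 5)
Your proposal is correct and follows essentially the same route as the paper: cut the geodesic representative $\alpha^X$ into arcs along the pants curves, transfer each arc to $X'$ via the endpoint/twist-preserving identification of the pants boundaries, apply Lemma \ref{lem:Y-compare} arc by arc with $D=\max\{D_1,D_2,D_3\}$ and $C=\max\{C_1,C_2,C_3\}$, and symmetrize by swapping $X$ and $X'$. The only difference is cosmetic — you treat the case $[\alpha]=[\gamma_j]$ separately and make explicit the (correct) verification that the transferred piecewise-geodesic loop stays in the class $[\alpha]$, a point the paper leaves implicit.
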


 \begin{proof}
     The pants decomposition $\Gamma$ divides the surface $S$ into $2g-2+n$ pairs of topological pants $\mathcal{R}=\{R_1,...,R_{2g-2+n}\}$. Let $R^X_i$ be the restriction of $X$ on $R_i$,  $\alpha^X$ the geodesic representative of $[\alpha]$ on $X$, and $\alpha^X_{i}$ the union of restrictions of $\alpha^X$ on $R^X_i$, $i=1,...,2g-2+n$ (see Fig.~\ref{fig:arc}). The sum of the numbers of arcs in $\alpha_i^{X}$ on all pairs of pants is $\Sigma_{j=1}^{3g-3+n} i([\alpha],[\gamma_j])$. For any different $X,X'\in T_{g,n}(\Lambda)$, the geodesic representative $\alpha^X$ of $\alpha$ on $X$  is usually not a geodesic representative of $\alpha$ on $X'$. We modify $\alpha^X$ as follows: for any $i=1,...,2g-2+n$,  replace each arc in $\alpha_i^X$ by its geodesic representative with respect to $X'$ in its homotopy class relative to the endpoints. Denote by $\alpha^{XX'}$ the resulting simple closed curve   and by  $\alpha_i^{XX'}$ the restriction of $\alpha^{XX'}$ on $R_i^{X'}$.
     Recall that $l_X([\alpha])$ represents the length of the geodesic representative in $[\alpha]$, we denote by $l_X(k)$ the length of any arc $k$ on $X$.
     \vskip 10pt
     (a) From Lemma  ~\ref{lem:Y-compare}, we have
     \begin{eqnarray*}
     l_{X'}([\alpha])-l_X([\alpha])
     &\leq&
     l_{X'}(\alpha^{XX'})-l_X([\alpha])\\
     &=& \Sigma_{i=1}^{2g-2+n}[l_{X'}(\alpha^{XX'}_i)-l_X(\alpha^X_i)]\\
     &\leq& D\Sigma_{j=1}^{3g-3+n} i([\alpha],[\gamma_j]),
     \end{eqnarray*}
     where $D=\max\{D_1,D_2,D_3\}$ and $D_1,D_2,D_3$ are the constants in Lemma ~\ref{lem:Y-compare}.

     By interchanging the role of $X$ and $X'$, we get
     $$ l_{X}([\alpha])-l_{X'}([\alpha])
     \leq D\Sigma_{j=1}^{3g-3+n} i([\alpha],[\gamma_j]).$$

      Moreover, we have  $D\to0$, as $\Lambda\to0$.

     \vskip 10pt

     (b) Again from Lemma  ~\ref{lem:Y-compare}, we have
      \begin{eqnarray*}
      l_{X'}([\alpha])&\leq& l_{X'}(\alpha^{XX'})\\
      &=& \Sigma_{i=1}^{2g-2_n}l_{R^{X'}_i}(\alpha^{XX'}_i)\\
      &\leq& \Sigma_{i=1}^{2g-2_n}Cl_{R^X_i}(\alpha^{X}_i)\\
      &=& C l_X(\alpha^X)\\
      &=&Cl_X([\alpha]),
      \end{eqnarray*}

     and
      \begin{eqnarray*}
      l_{X'}([\alpha])&=& l_{X'}(\alpha^{X'})\\
      &=& \Sigma_{i=1}^{2g-2_n}l_{R^{X'}_i}(\alpha^{X'}_i)\\
      &\geq& \Sigma_{i=1}^{2g-2_n}C^{-1}l_{R^{X}_i}(\alpha^{X'X}_i)\\
      &=& C^{-1} l_{X}(\alpha^{X'X})\\
      &\geq&C^{-1}l_{X}([\alpha]),
      \end{eqnarray*}
      where $C=\max\{C_1,C_2,C_3\}$ and $C_1,C_2,C_3$ are the constants in Lemma ~\ref{lem:Y-compare}.

      Moreover $C\to1$, as $\Lambda\to0$.

 \end{proof}

 \section{Almost-isometry between $\mathcal{T}_{g,n}(\Lambda)$ and $\mathcal{T}_{g,n}(0)$}\label{sec:al-iso}

In this section, we prove Theorem D.
\begin{theoremd}[Almost-isometry]\label{thm:quasi-iso}
  Suppose  $\Lambda\in(-\pi,\infty)^n$ and that   $S$ is a non-exceptional surface.  Then
  $F_{\Gamma,\mathcal B,\Lambda}:\mathcal{T}_{g,n}(\Lambda)\to \mathcal{T}_{g,n}(0)$ is an almost-isometry, i.e. there is a constant $C$ depending only on $\Lambda$ such that
  $$ d_{Th}(X'_1,X'_2)-C\leq d_{Th}(X_1,X_2)\leq d_{Th}(X'_1,X'_2)+C, \text{ for any } X_1,X_2\in\mathcal{T}_{g,n},$$
  where $X'_1=F_{\Gamma,\Lambda}(X_1)$ and $X'_2=F_{\Gamma,\Lambda}(X_2)$.
  Moreover, $C\to0$, as $\Lambda\to0$.
\end{theoremd}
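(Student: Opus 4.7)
The plan is to reduce Theorem D directly to the ratio half of Theorem B, with no further geometric input required. Fix $X_1,X_2\in\mathcal{T}_{g,n}(\Lambda)$ and set $X_i'=F_{\Gamma,\mathcal{B},\Lambda}(X_i)$ for $i=1,2$. For every isotopy class $[\alpha]\in\mathcal{S}$ of non-peripheral simple closed curves, I would write the multiplicative decomposition
\[
\frac{l_{X_2}([\alpha])}{l_{X_1}([\alpha])}
\;=\;
\frac{l_{X_2}([\alpha])}{l_{X_2'}([\alpha])}
\cdot
\frac{l_{X_2'}([\alpha])}{l_{X_1'}([\alpha])}
\cdot
\frac{l_{X_1'}([\alpha])}{l_{X_1}([\alpha])}.
\]
By Theorem B applied separately to $X_1$ and to $X_2$, the first and third factors both lie in $[1/C_\Lambda,\,C_\Lambda]$, where $C_\Lambda$ is the constant furnished by Theorem B (the key point being that it depends only on $\Lambda$ and on the fixed data $(\Gamma,\mathcal{B})$, not on the particular surfaces or on $[\alpha]$).

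Therefore, uniformly in $[\alpha]$,
\[
\frac{1}{C_\Lambda^{2}}\cdot\frac{l_{X_2'}([\alpha])}{l_{X_1'}([\alpha])}
\;\leq\;
\frac{l_{X_2}([\alpha])}{l_{X_1}([\alpha])}
\;\leq\;
C_\Lambda^{2}\cdot\frac{l_{X_2'}([\alpha])}{l_{X_1'}([\alpha])}.
\]
Taking the supremum over $[\alpha]\in\mathcal{S}$ (the multiplicative constants are independent of $[\alpha]$, so they pull out of the sup), and then applying the logarithm, gives with $C:=2\log C_\Lambda$:
\[
d_{\mathrm{Th}}(X_1',X_2')-C\;\leq\;d_{\mathrm{Th}}(X_1,X_2)\;\leq\;d_{\mathrm{Th}}(X_1',X_2')+C.
\]
Since Theorem B also ensures that $C_\Lambda\to 1$ as $\Lambda\to 0$, the constant $C=2\log C_\Lambda$ tends to $0$ as $\Lambda\to 0$, giving the desired quantitative statement.

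The main obstacle has already been overcome in Theorem B, whose proof handled the pants-by-pants comparison of arc lengths carefully enough to produce \emph{uniform} multiplicative bounds that do not depend on the homotopy class $[\alpha]$ nor on the specific marked hyperbolic structure. Once one has such uniform control, passing from lengths of individual curves to the Thurston metric (which is a log-sup of ratios) is purely formal: the uniformity is exactly what allows the constant to commute with the supremum and the logarithm. Finally, the fact that $F_{\Gamma,\mathcal{B},\Lambda}$ is a genuine map between the two Teichm\"uller spaces (rather than a multi-valued correspondence) is guaranteed by the Fenchel--Nielsen parametrization of Section \ref{sec:fn}, so no further bijectivity argument is needed beyond what is already in place.
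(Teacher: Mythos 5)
Your proposal is correct and follows essentially the same route as the paper: the paper's proof of Theorem D likewise applies the ratio inequality from Theorem B to each of $X_1$ and $X_2$, pulls the uniform constant through the supremum and the logarithm, and obtains the additive constant $C=2\log C_\Lambda$, which tends to $0$ as $\Lambda\to 0$. No substantive difference.
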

\begin{proof}
  It follows from Theorem B that
  \begin{eqnarray*}
      d_{Th}(X_1,X_2)&=&\sup_{[\alpha]\in \Sim}\log\frac{l_{X_2}([\alpha])}{l_{X_1}([\alpha])}\\
      &\leq& \sup_{[\alpha]\in \Sim}\log\frac{Cl_{X'_2}([\alpha])}{C^{-1}l_{X'_1}([\alpha])}\\
      &=& d_{Th}(X'_1,X'_2)+2\log C,
  \end{eqnarray*}
and
  \begin{eqnarray*}
      d_{Th}(X_1,X_2)&=&\sup_{[\alpha]\in \Sim}\log\frac{l_{X_2}([\alpha])}{l_{X_1}([\alpha])}\\
      &\geq& \sup_{[\alpha]\in \Sim}\log\frac{C^{-1}l_{X'_2}([\alpha])}{Cl_{X'_1}([\alpha])}\\
      &=& d_{Th}(X'_1,X'_2)-2\log C.
  \end{eqnarray*}

\end{proof}
 It follows from Remark \ref{remark:twist} that
 the map $F_{\Gamma,\mathcal B, \Lambda}$ is independent of $\mathcal B$. Instead, it depends on the choice of $\Gamma$, but the images of $X\in\mathcal{T}_{g,n}(\Lambda)$ under  different pants-compositions $\Gamma,\Gamma'$ stay within a bounded distance, i.e. $$d_{Th}(F_{\Gamma,\mathcal B,\Lambda}(X),F_{\Gamma',\mathcal B',\Lambda}(X))\leq 2C,$$
 where $\mathcal B'$ is a collection of seams with respect to $\Gamma'$.

 Indeed, it follows from Theorem B that for any non-peripheral simple closed curve $\alpha$, we have
 $$\frac{1}{C}\leq \frac{l_X([\alpha])}{l_{F_{\Gamma,\mathcal B,\Lambda}(X)}([\alpha])}\leq C$$
 and
 $$\frac{1}{C}\leq \frac{l_X([\alpha])}{l_{F_{\Gamma',\mathcal B',\Lambda}(X)}([\alpha])}\leq C,$$
 where $C$ is the constant from Theorem B.
 Hence,
  $$\frac{1}{C}\leq \frac{l_{F_{\Gamma',\mathcal B',\Lambda}(X)}([\alpha])}{l_{F_{\Gamma,\mathcal B,\Lambda}(X)}([\alpha])}\leq C,$$
  which induces
  $$d_{Th}(F_{\Gamma,\mathcal B,\Lambda}(X),F_{\Gamma',\mathcal B',\Lambda}(X))\leq 2C.$$
 Besides, the composition map $F_{\Gamma,\mathcal B,\Lambda}\circ F^{-1}_{\Gamma',\mathcal B',\Lambda}:\mathcal{T}_{g,n}(0)\to\mathcal{T}_{g,n}(0)$ defines an almost-isometry from $\mathcal{T}_{g,n}(0)$ to itself.


\section{Basic properties of  $\mathcal{T}_{g,n}(\Lambda)$}\label{sec:thurstonb}
In this section, we suppose that $\Lambda\in(-\pi,\infty)^n$ and that the surfaces  are admissible.

\subsection{Topology on $\mathcal{T}_{g,n}(\Lambda)$}
From the discussions in sections \S\ref{sec:met}-\S\ref{sec:rigidity}, we know that the Teichm\"uller space $\mathcal{T}_{g,n}(\Lambda)$ can be equipped with the following topologies:
\begin{itemize}
  \item the topology induced from the functional space $\mathbb R^\mathcal S_+$ over the space of isotopy classes of non-peripheral simple closed curves, where $\mathbb R^\mathcal S_+$ is endowed with the weak topology;
  \item the topology induced from the Fenchel-Nielsen coordinates;
  \item the topology induced from the Thurston metric.
\end{itemize}
It follows from Theorem \ref{thm:mlss}  that the first topology and the second topology are equivalent.  The equivalence between the first topology and the third topology can be obtained from Theorem E.
\subsection{Metric properties of $(\mathcal{T}_{g,n}(\Lambda),d_{Th})$}\label{sec:metric-pro}
Let $\bar{d}_{Th}(X_1,X_2)\triangleq {d}_{Th}(X_2,X_1)$ and
$d_{sym}(X_1,X_2)\triangleq d_{Th}(X_1,X_2)+d_{Th}(X_2,X_1)$. We have the following basic properties.
\begin{proposition}\label{prop:T-ele-prop}
  Suppose  $\Lambda\in(-\pi,\infty)^n$ and that   $S$ is a non-exceptional surface. Let  $(\mathcal{T}_{g,n}(\Lambda),d_{Th})$ be endowed with the topology induced by the metric $d_{Th}$. Then
  \begin{enumerate}[(a)]
    \item $d_{Th}$ is a proper metric on $\mathcal{T}_{g,n}(\Lambda)$.
    \item $(\mathcal{T}_{g,n}(\Lambda),d_{Th})$ is a geodesic metric space.
    \item The topologies on $\mathcal{T}_{g,n}(\Lambda)$ defined by $d_{Th}$, $\bar{d}_{Th}$ and  $d_{sym}$ are the same,
        \\i.e. $d_{Th}(X,X_n)\to 0 \iff d_{Th}(X_n,X)\to 0$ as $n\to\infty$.
  \end{enumerate}
\end{proposition}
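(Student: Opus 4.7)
The plan is to establish the three parts in the order (c), (a), (b), with each step building on the previous.

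For (c), the core idea is to route both $d_{\mathrm{Th}}(X,X_n)\to 0$ and $d_{\mathrm{Th}}(X_n,X)\to 0$ through the intermediate statement that $X_n\to X$ in the marked length spectrum topology (equivalently, the Fenchel--Nielsen topology, by Theorem \ref{thm:mlssb}). Assuming $d_{\mathrm{Th}}(X,X_n)\to 0$, the definition immediately gives a uniform upper bound $l_{X_n}([\alpha])\le e^{d_{\mathrm{Th}}(X,X_n)}l_X([\alpha])$ on every non-peripheral simple closed curve. The collar lemma for hyperbolic cone surfaces---a pants curve shrinking to zero forces any transverse curve to diverge---then supplies a uniform lower bound on the pants lengths, so the Fenchel--Nielsen coordinates of $\{X_n\}$ lie in a compact cell. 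Any subsequential FN-limit $Y$ satisfies $l_Y([\alpha])\le l_X([\alpha])$ for every $\alpha$, which by Theorem C forces $Y=X$, so the whole sequence converges in FN. The symmetric hypothesis $d_{\mathrm{Th}}(X_n,X)\to 0$ would be handled in parallel, with the reversed length control combined with the Tam--Wong--Zhang identity of Theorem \ref{thm:twz} and the strict monotonicity of the Gap functions (as used in the proof of Theorem C) to secure the complementary upper length bound. The reverse direction---length-spectrum convergence implies $d_{\mathrm{Th}}$- and $\bar d_{\mathrm{Th}}$-convergence---would be obtained by continuously extending length functions to $\mathcal{ML}(S)$ and noting that the suprema defining $d_{\mathrm{Th}}$ are attained on the compact space $\mathcal{PML}(S)$; the multiplicative length comparison of Theorem B lets the corresponding classical continuity transfer from $\mathcal{T}_{g,n}(0)$ to our setting.

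Part (a) will follow quickly from (c). With the $d_{\mathrm{Th}}$-topology identified with the FN topology, symmetric closed balls are shown to be compact directly: the hypothesis $d_{\mathrm{sym}}(X_0,X)\le R$ yields two-sided control $e^{-R}l_{X_0}([\alpha])\le l_X([\alpha])\le e^R l_{X_0}([\alpha])$ for every $\alpha$, and the collar argument bounds the Fenchel--Nielsen coordinates on both sides. Closedness is immediate from the continuity of $d_{\mathrm{Th}}$ established in (c).

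For (b), I plan to exploit the almost-isometry of Theorem D: any classical Thurston stretch geodesic in $(\mathcal{T}_{g,n}(0),d_{\mathrm{Th}})$ pulls back through $F^{-1}_{\Gamma,\mathcal B,\Lambda}$ to a $(1,2C)$-quasi-geodesic in $(\mathcal{T}_{g,n}(\Lambda),d_{\mathrm{Th}})$, and in the proper length space secured by (a), an Arzel\`a--Ascoli argument on parametrized paths joining two fixed endpoints extracts a subsequential limit that is a genuine geodesic. The main obstacle in the whole proposition will be the converse implication in (c): promoting pointwise length-spectrum convergence to uniform convergence of the ratios $l_{X_n}([\alpha])/l_X([\alpha])$ over the infinite set $\mathcal{S}$. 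This demands a continuity analysis for length functions on $\mathcal{ML}(S)$ in the cone setting, for which the only available tools are the multiplicative comparison of Theorem B and the boundary analysis underlying Theorem E.
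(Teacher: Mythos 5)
Your plan for (a) and (c) is essentially sound and is in fact far more explicit than the paper's own proof, which settles (a) by observing that $\mathcal{T}_{g,n}(\Lambda)$ is homeomorphic to the Fenchel--Nielsen cell (so compact = closed and bounded) and settles (c) by citing \cite{Liu1} and \cite{PT1}. Your mechanism for the hard direction of (c) --- upper length bounds plus the collar lemma confine the Fenchel--Nielsen coordinates to a compact cell, the subsequential limit is pinned down by Theorem C, and the direction $d_{\mathrm{Th}}(X_n,X)\to 0$ (which only yields lower bounds $l_{X_n}\geq e^{-\epsilon_n}l_X$) is rescued by the Tam--Wong--Zhang identity and the strict monotonicity of the Gap functions --- is exactly the engine behind the paper's proof of Theorem C and is the right substitute for the classical argument. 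Two caveats: for (a) you should check properness of the \emph{forward} balls $\{X: d_{\mathrm{Th}}(X_0,X)\leq R\}$, since that is what (b) consumes (this is actually the easier case, as the hypothesis already gives $l_X([\alpha])\leq e^{R}l_{X_0}([\alpha])$ for all $\alpha$); and for the converse direction of (c) note that Theorem B alone cannot transfer the statement $\sup_\alpha l_{X_n}([\alpha])/l_X([\alpha])\to 1$ from $\mathcal{T}_{g,n}(0)$, because the multiplicative constant $C(\Lambda)$ does not tend to $1$ for fixed $\Lambda$, so the continuity of length functions on $\mathcal{ML}(S)$ in the cone setting genuinely has to be established --- you correctly flag this as the residual difficulty.

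The genuine gap is in (b). An almost-isometry with a fixed additive error $C>0$ does not carry geodesics to paths of controlled length: pulling back a stretch geodesic $\sigma'$ of $(\mathcal{T}_{g,n}(0),d_{\mathrm{Th}})$ through $F^{-1}_{\Gamma,\mathcal B,\Lambda}$ gives only the coarse estimate $|d_{\mathrm{Th}}(\sigma(s),\sigma(t))-|t-s||\leq 2C$ for each pair $s,t$; summing over a partition with $N$ subdivision points, the length of the pulled-back path may exceed $d_{\mathrm{Th}}(X_1,X_2)$ by roughly $NC$, so it is not an almost-minimizing path, and the coarse inequality does not even furnish the equicontinuity that Arzel\`a--Ascoli requires. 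Worse, a limit of $(1,2C)$-quasi-geodesics with fixed $C$ is again only a $(1,2C)$-quasi-geodesic: the additive error does not vanish in the limit, so no compactness argument upgrades it to a genuine geodesic. The argument that actually works (and the one the paper gestures at via \cite[\S 1.6]{Bu}) is intrinsic: one needs that $d_{\mathrm{Th}}$ is a length metric on $\mathcal{T}_{g,n}(\Lambda)$, so that there are paths from $X_1$ to $X_2$ of length arbitrarily close to $d_{\mathrm{Th}}(X_1,X_2)$, all contained in the compact forward ball of radius $d_{\mathrm{Th}}(X_1,X_2)+1$ about $X_1$ supplied by (a); parametrizing such a minimizing sequence proportionally to arc length makes it equi-Lipschitz, and Arzel\`a--Ascoli together with lower semicontinuity of length produces a geodesic. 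Your proposal should either supply this Hopf--Rinow-type argument or justify the intrinsicness of $d_{\mathrm{Th}}$ in the cone setting; the detour through Theorem D does neither.
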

\begin{proof}
  (a) Since $\mathcal{T}_{g,n}(\Lambda)$ is homeomorphic to  $(-\pi,\infty)^n\times \mathbb{R}^{3g-3+n}_+\times \mathbb{R}^{3g-3+n}$, a subset of $\mathcal{T}_{g,n}(\Lambda)$ is compact if and only if it is closed and bounded. It follows that $d_{Th}$  is a proper metric.

  (b) Suppose $X_1,X_2\in \mathcal{T}_{g,n}(\Lambda)$, it follows from (a) that the ball $\{X\in \mathcal{T}_{g,n}(\Lambda):d(X_1,X)\leq d(X_1,X_2)\}$ is a compact subset of $\mathcal{T}_{g,n}(\Lambda)$. Now the existence of a geodesic connecting $X_1$ and $X_2$ in $\{X\in \mathcal{T}_{g,n}(\Lambda):d(X_1,X)\leq d(X_1,X_2)\}$ is a standard argument by applying the Ascoli-Arzela Theorem (see \S1.6 in ~\cite{Bu} for instance).

  (c) The proof is similar to that in \cite{Liu1} and \cite{PT1}.

\end{proof}

\subsection{The Thurston's boundary of $\mathcal{T}_{g,n}(\Lambda)$}\label{sec:thurs-boundary}


 A \textit{measured geodesic lamination} is a geodesic lamination equipped with a transverse invariant measure (see \cite{Pen} for the precise definition of measured laminations). The simplest example of a measured geodesic lamination is a weighted simple closed curve. A measured geodesic lamination $(L,\lambda)$ can be viewed as a functions on the set of isotopy classes of non-peripheral simple closed curves via:

 \begin{eqnarray*}
   i_{(L,\lambda)}:\Sim &\longrightarrow & [0,\infty)\\
             {[\alpha]} &\longmapsto &              \inf_{\bar{\alpha}\in[\alpha]}\lambda(\bar{\alpha}).
 \end{eqnarray*}
The value $\inf_{\bar{\alpha}\in[\alpha]}\lambda(\bar{\alpha})$ is called the \textit{intersection  number} of $(L,\lambda)$ and $[\alpha]$. From now on, for simplicity, we just write $\lambda$ instead of $(L,\lambda)$ and denote the intersection number between $\lambda$ and $[\alpha]$ by $i(\lambda,[\alpha])$. Two measured geodesic laminations $\lambda'$ and $\lambda$ are called \textit{equivalent} if $i(\lambda,[\alpha])=i(\lambda',[\alpha])$ for all $[\alpha]\in \Sim$.  Denote by $\mathcal{ML}(S)$ the space of equivalence classes of measured geodesic laminations. The definition of intersection number induces an embedding of $\mathcal{ML}(S)$ into $\mathbb{R}^{\mathcal{S}}_+$, the functional space of $\Sim$. Let $\mathcal{PML}(S)\triangleq\mathcal{ML}(S)/{\mathbb{R}_+}$ be the projection of $\mathcal{ML}(S)$.
\begin{theorem}[\cite{FLP} The Thurston's boundary of $\mathcal{T}_{g,n}(0)$]\label{thm:thurston-bound-p}

  Let $\Psi_{0}$ and $ \Pi$ be the maps defined as following:
  \begin{eqnarray*}
    \Psi_{0}:\mathcal{T}_{g,n}(0)& \longrightarrow &\mathbb{R}^{\mathcal{S}}_+\\
                X &\longmapsto & (l_X([\alpha]))_{\alpha\in\Sim},\\
  \end{eqnarray*}
and
  \begin{eqnarray*}
    \Pi: \mathbb{R}^{\mathcal{S}}_+ &\longrightarrow& P\mathbb{R}^{\mathcal{S}}_+\\
        (s_{\alpha})_{\alpha\in\Sim}&\longmapsto &[(s_{\alpha})_{\alpha\in\Sim}].
  \end{eqnarray*}
  Then
  \begin{enumerate}[(a)]
    \item both $\Psi_{0}$ and $\Pi\circ\Psi_0$ are embeddings, where $\mathcal{T}_{g,n}(0)$ is equipped with the topology induced by $d_{Th}$ and $\mathbb{R}^{\mathcal{S}}_+$ is equipped with the weak topology, i.e. the pointwise convergence topology;
    \item
        the boundary of $\mathcal{T}_{g,n}(0)$ in $P\mathbb{R}^{\mathcal{S}}_+$ is the projection of the space of measured lamination, i.e.
        $\partial \overline{\mathcal{T}_{g,n}(0)}^{Th}=\mathcal{PML}(S)$.

  \end{enumerate}
\end{theorem}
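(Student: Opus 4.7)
The plan is to separate Theorem~\ref{thm:thurston-bound-p} into three tasks: verify that $\Psi_0$ is a topological embedding, verify that $\Pi\circ\Psi_0$ remains injective after projectivization, and identify the frontier of its image with $\mathcal{PML}(S)$. For injectivity of $\Psi_0$, I would specialize Theorem A (or the classical FLP/Schmutz rigidity) to $\Lambda=0$: a complete cusped hyperbolic structure is determined by a finite specified list of geodesic lengths, hence a fortiori by $l_X|_{\Sim}$. Continuity of $\Psi_0$ is immediate from the continuous dependence of lengths on Fenchel-Nielsen coordinates, and continuity of the inverse on the image follows by post-composing with projection onto a finite rigid subset $\mathcal{S}'\subset\Sim$ (Theorem A), on which $\Psi_0$ is already a homeomorphism onto an open subset of $\mathbb{R}_+^{\mathcal{S}'}$. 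To upgrade to the projectivization one must exclude $l_{X_1}(\cdot)=\mu\, l_{X_2}(\cdot)$ with $\mu\neq 1$; this fails because, fixing a pair of pants in any pants decomposition, the length of its seam and the three boundary lengths are linked by the non-homogeneous pentagon identity (\ref{eq:Pen-3}), equivalently because Gauss-Bonnet forces all points in $\mathcal{T}_{g,n}(0)$ to have the same area and thus obstructs uniform rescaling.

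For part (b), I would embed $\mathcal{ML}(S)$ into $\mathbb{R}_+^{\mathcal{S}}$ via $\lambda\mapsto(i(\lambda,[\alpha]))_{[\alpha]\in\Sim}$ and project to $\mathcal{PML}(S)\subset P\mathbb{R}_+^{\mathcal{S}}$. The easier (reverse) direction is that every $[\lambda]\in\mathcal{PML}(S)$ arises as a limit of $\Pi\circ\Psi_0(X_n)$: approximate $\lambda$ by weighted simple closed curves $c_k\gamma_k$ (Thurston's density theorem) and construct $X_k$ by performing Dehn twists of arbitrarily large order along $\gamma_k$ on a fixed basepoint. The fact that for large $N$ the length of the $N$-fold Dehn twist of a fixed $[\alpha]$ around $\gamma_k$ grows like $N\cdot i([\alpha],[\gamma_k])\cdot l_{X_k}([\gamma_k])$ implies, after appropriate rescaling, that the length spectrum of $X_k$ converges pointwise to $i(\lambda,\cdot)$.

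The main obstacle is the forward direction: for any sequence $X_n$ escaping every compact subset of $\mathcal{T}_{g,n}(0)$, after a scale $t_n\to 0$ with $t_n\, l_{X_n}\to\eta$ in $\mathbb{R}_+^{\mathcal{S}}$, one must identify $\eta$ as $i(\lambda,\cdot)$ for a unique $[\lambda]\in\mathcal{PML}(S)$. The strategy is to introduce Dehn-Thurston coordinates for $\Sim$ relative to a fixed pants decomposition $\Gamma$ (assigning each $[\alpha]$ a tuple of intersection numbers with, and twist indices around, the components of $\Gamma$), pass to a subsequence so that the Fenchel-Nielsen data of $X_n$ converge in a suitable compactification $[0,\infty]^{3g-3+n}\times[-\infty,\infty]^{3g-3+n}$, and match the resulting asymptotic formulas for $l_{X_n}([\alpha])$ with the combinatorial formulas expressing $i(\lambda,[\alpha])$ in Dehn-Thurston coordinates of $\lambda$. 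A diagonal compactness argument together with the density of weighted simple closed curves in $\mathcal{ML}(S)$ then identifies $\eta$ with the intersection function of a unique $[\lambda]\in\mathcal{PML}(S)$ and shows that the boundary map is a homeomorphism onto $\mathcal{PML}(S)$.
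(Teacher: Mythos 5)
You should first be aware that the paper offers no proof of Theorem~\ref{thm:thurston-bound-p}: it is stated as a known result, attributed to \cite{FLP}, and is consumed as a black box in the proof of Theorem~E (Theorem~\ref{thm:thurston-bound}). So there is no in-paper argument to measure yours against; what you have written is, in outline, the classical FLP proof (length-spectrum rigidity for injectivity of $\Psi_0$, Dehn--Thurston coordinates and the asymptotic comparison of hyperbolic lengths with intersection numbers for the identification of the frontier with $\mathcal{PML}(S)$), and citing \cite{FLP} for part~(b) is exactly what the author does.

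That said, one step of your part~(a) does not hold up as written. To rule out $l_{X_1}=\mu\, l_{X_2}$ with $\mu\neq 1$ you offer two arguments. The Gauss--Bonnet one is not valid: proportionality of the marked length spectra does not mean $X_1$ is a literal constant rescaling of the metric $X_2$, so the fact that all points of $\mathcal{T}_{g,n}(0)$ have area $2\pi|\chi(S)|$ gives no contradiction by itself. The pentagon-identity argument is the right germ, but the seam of a pair of pants is an arc, not an element of $\Sim$, so the non-homogeneity of \eqref{eq:Pen-3} must first be converted into a statement about lengths of \emph{closed} curves before it constrains $\Psi_0$. The standard repair is precisely the device the paper uses in the proof of Theorem~E(a): apply an iterated Dehn twist $\delta_n=Tw_\gamma^n\delta$ and use a formula of type \eqref{eq:len-delta}, which gives $e^{l_X([\delta_n])/2 - n\,l_X([\gamma])}\to G(X)$ as in \eqref{eq:limit1}, where $G$ is a non-scale-invariant (indeed monotone in $l_X([\gamma])$) function of the Fenchel--Nielsen data; proportional spectra with $\mu>1$ would force the limit to increase and $G$ to decrease simultaneously. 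You should replace your Gauss--Bonnet sentence with this twist-asymptotics argument (or with the filling-pair inequality used in \cite{FLP}). The rest of part~(a) and the reverse direction of part~(b) are fine; the forward direction of part~(b) remains only a plan, with the hard analytic core (matching $t_n l_{X_n}$ against intersection functions over all of $\Sim$, not just a convergent subsequence of coordinates) left to \cite{FLP}, which is acceptable given that the theorem is being quoted rather than proved.
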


\begin{theoreme}[The Thurston's boundary]\label{thm:thurston-bound}
  Suppose  $\Lambda=(\lambda_1,...,\lambda_n)\in (-\pi,\infty)^n$  and that $S$ is a non-exceptional surface. Let $\Psi_{\Lambda}$ and $ \Pi$ be the maps defined below:
  \begin{eqnarray*}
    \Psi_{\Lambda}:\mathcal{T}_{g,n}(\Lambda)& \longrightarrow &\mathbb{R}^{\mathcal{S}}_+\\
                X &\longmapsto & (l_X([\alpha]))_{\alpha\in\Sim},\\
  \end{eqnarray*}
and
  \begin{eqnarray*}
    \Pi: \mathbb{R}^{\mathcal{S}}_+ &\longrightarrow& P\mathbb{R}^{\mathcal{S}}_+\\
        (s_{\alpha})_{\alpha\in\Sim}&\longmapsto &[(s_{\alpha})_{\alpha\in\Sim}].
  \end{eqnarray*}
  Then the followings hold.
  \begin{enumerate}[(a)]
    \item Both $\Psi_{\Lambda}$ and $\Pi\circ\Psi_{\Lambda}$ are embeddings, where $\mathcal{T}_{g,n}(\Lambda)$ is equipped with the topology induced by $d_{Th}$ and $\mathbb{R}^{\mathcal{S}}_+$ is equipped with the weak topology, i.e. the pointwise convergence topology;
    \item Denote by $\partial \overline{\mathcal{T}_{g,n}(\Lambda)}^{Th}$ the closure of $\mathcal{T}_{g,n}(\Lambda)$ in $P\mathbb{R}^{\mathcal{S}}_+$.
        $\mathcal{T}_{g,n}(\Lambda)\ni X_n \longrightarrow \xi\in\partial \overline{\mathcal{T}_{g,n}(\Lambda)}^{Th}\iff \mathcal{T}_{g,n}(0)\ni F_{\Gamma,\Lambda}(X_n)\longrightarrow \xi\in\mathcal{PML}(S)$,
        as a result,
        the boundary of $\mathcal{T}_{g,n}(\Lambda)$ in $P\mathbb{R}^{\mathcal{S}}_+$ is the projection of the space of measured lamination, i.e.
        $\partial \overline{\mathcal{T}_{g,n}(\Lambda)}^{Th}=\mathcal{PML}(S)$.

  \end{enumerate}
\end{theoreme}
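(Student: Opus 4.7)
The plan is to derive (a) from the rigidity results already established (Theorem A, Theorem \ref{thm:mlssb}, and Theorem C) together with standard continuity arguments, and to derive (b) by pushing projective convergence across the almost-isometry $F_{\Gamma,\mathcal{B},\Lambda}$ using the additive inequality of Theorem B, reducing the Thurston-boundary identification to the classical statement for $\mathcal{T}_{g,n}(0)$ in Theorem \ref{thm:thurston-bound-p}.

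For part (a), injectivity of $\Psi_\Lambda$ is exactly Theorem \ref{thm:mlssb}, and continuity holds because each length $l_X([\alpha])$ depends continuously on the Fenchel--Nielsen coordinates (equivalent to the Thurston-metric topology by Proposition \ref{prop:T-ele-prop}). For inverse continuity, Theorem A supplies a finite subset $\mathcal{S}'(S)\subset\mathcal{S}$ of cardinality less than $12g-12+32n$ on which the restricted length-vector map is already injective; it is continuous and proper (Proposition \ref{prop:T-ele-prop}(a)) between manifolds of equal dimension, so invariance of domain makes it a homeomorphism onto its image, and hence $\Psi_\Lambda^{-1}$ is continuous in the weak topology. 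For injectivity of $\Pi\circ\Psi_\Lambda$, if $l_{X_1}=c\,l_{X_2}$ on $\mathcal{S}$ for some $c>0$, relabelling to arrange $c\geq 1$ puts $X_1,X_2$ into the hypothesis of Theorem C, forcing $X_1=X_2$ and $c=1$. Continuity of $\Pi\circ\Psi_\Lambda$ is clear, and inverse continuity follows from (b).

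For part (b), the decisive input is the additive comparison of Theorem B,
$$ |l_{X_n}([\alpha])-l_{F_{\Gamma,\mathcal{B},\Lambda}(X_n)}([\alpha])|\leq D\sum_{j=1}^{3g-3+n}i([\alpha],[\gamma_j]), $$
with $D=D(\Lambda)$ independent of $n$. Assume $[\Psi_\Lambda(X_n)]\to\xi\in\mathcal{PML}(S)$ and choose positive $t_n$ with $t_n\,l_{X_n}([\alpha])\to i(\xi,[\alpha])$ for every $[\alpha]$. I would first show $t_n\to 0$: otherwise along a subsequence $t_n\geq\varepsilon>0$, whence $l_{X_n}([\alpha])$ is uniformly bounded above for every $[\alpha]$; by the collar lemma each pants length is then bounded below, and boundedness of the twists follows from boundedness of the lengths of curves crossing each $\gamma_j$, so $X_n$ is precompact (Proposition \ref{prop:T-ele-prop}(a)) with a limit $X^\ast\in\mathcal{T}_{g,n}(\Lambda)$ satisfying $[l_{X^\ast}]=\xi$. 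Extending $l_{X^\ast}$ continuously to $\mathcal{ML}(S)$ and evaluating the proportionality at $\xi$ itself would give $l_{X^\ast}(\xi)=c\,i(\xi,\xi)=0$, contradicting the strict positivity of length functions on nonzero measured laminations. With $t_n\to 0$ secured, multiplying the displayed inequality by $t_n$ yields $t_n\,l_{F_{\Gamma,\mathcal{B},\Lambda}(X_n)}([\alpha])\to i(\xi,[\alpha])$, i.e.\ $F_{\Gamma,\mathcal{B},\Lambda}(X_n)\to\xi$ in $P\mathbb{R}_+^\mathcal{S}$. The reverse implication is entirely symmetric, using that projective convergence of $F_{\Gamma,\mathcal{B},\Lambda}(X_n)$ to $\xi\in\mathcal{PML}(S)$ forces $F_{\Gamma,\mathcal{B},\Lambda}(X_n)$ out of every compact set of $\mathcal{T}_{g,n}(0)$, and hence by the multiplicative bound $X_n$ out of every compact set of $\mathcal{T}_{g,n}(\Lambda)$. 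Combined with Theorem \ref{thm:thurston-bound-p}, this identifies $\partial\overline{\mathcal{T}_{g,n}(\Lambda)}^{Th}$ with $\mathcal{PML}(S)$.

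The principal obstacle I anticipate is the dichotomy step forcing $t_n\to 0$, which hinges on the disjointness $\Pi\Psi_\Lambda(\mathcal{T}_{g,n}(\Lambda))\cap\mathcal{PML}(S)=\emptyset$; once that is cleanly settled by extending length functions continuously to $\mathcal{ML}(S)$ and invoking $i(\xi,\xi)=0$, the remainder is routine manipulation of the two-sided bounds in Theorem B.
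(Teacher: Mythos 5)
Your proposal is correct in substance, and for part (b) it follows essentially the same path as the paper: pick scaling factors $t_n$ with $t_n l_{X_n}([\alpha])\to\xi([\alpha])$, show $t_n\to 0$ because $X_n$ must leave every compact set and hence some length blows up, and then multiply the additive bound of Theorem B by $t_n$ to transfer projective convergence across $F_{\Gamma,\mathcal B,\Lambda}$; the reverse implication is by symmetry, exactly as in the paper. Where you genuinely diverge is the injectivity of $\Pi\circ\Psi_\Lambda$ in part (a). The paper does \emph{not} invoke Theorem C here: it runs a two-case analysis (at least two generalized boundary components versus exactly one), derives the asymptotics $\lim_n e^{l_X([\delta_n])/2-nl_X([\gamma])}=\sin\frac{|\lambda_2|}{2}\sin\frac{|\lambda_2'|}{2}\cosh|a_2|\cosh|a_2'|$ for Dehn-twisted curves from the hyperbolic trigonometric identities of Section 2, and shows that a global rescaling $l_{X'}=k\,l_X$ with $k>1$ would force this quantity to move in two incompatible directions. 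Your one-line reduction --- if $l_{X_1}=c\,l_{X_2}$ with $c\ge 1$ then Theorem C forces $X_1=X_2$ and $c=1$ --- is valid (both points lie in $\mathcal T_{g,n}(\Lambda)$, so Theorem C applies, and there is no circularity since Theorem C is established in Section 4) and is considerably shorter; what the paper's computation buys instead is an explicit quantitative asymptotic that is independent of the McShane-identity machinery. Two small repairs to your write-up: the appeal to invariance of domain for the finite length-vector map is misplaced, since the target $\mathbb R^{|\mathcal S'|}$ has strictly larger dimension than $\mathcal T_{g,n}(\Lambda)$ --- what you actually need (and have) is that a continuous, injective, proper map into a Hausdorff space is a closed embedding; and in the forward direction of (b) the contradiction in the $t_n\not\to0$ dichotomy is simpler than your $i(\xi,\xi)=0$ argument, since a subsequential limit $X^\ast$ would put $\xi=[\Psi_\Lambda(X^\ast)]$ in the image of $\Pi\circ\Psi_\Lambda$ rather than in the boundary. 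The $i(\xi,\xi)=0$ argument is still the right tool for the disjointness $\Pi\Psi_\Lambda(\mathcal T_{g,n}(\Lambda))\cap\mathcal{PML}(S)=\emptyset$, which is needed to conclude that the boundary is exactly $\mathcal{PML}(S)$.
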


\begin{proof}
  (a) Suppose that there is at least one cone point on the surface which we denote by $\Delta_3$. The proof is almost exactly as it is for Theorem~\ref{thm:thurston-bound-p}  with the addition that injectivity of $\Psi_{\Lambda}$ follows  from Theorem~\ref{thm:mlss} and the injectivity of $\Pi\circ\Psi_0$ requires a new arguement.  Our discussion is divided into two cases.

 \vskip 10pt
 \textbf{Case A} . $S$ has at least two generalized boundary components, say $\Delta_2,\Delta_2'$.  Since $S$ is a non-exceptional surface, there is a generalized $\mathcal X$-piece  on $S$ such that $\Delta_2,\Delta_2'$ are its two generalized boundary components, and the remaining two generalized boundary components, denoted by $\Delta_3,\Delta_3'$ are geodesic boundaries. Without loss of generality, we assume that $\Delta_2,\Delta_2'$ are cone points.
  Let $\gamma$ be a waist of this $\mathcal X$-piece and $\delta$ be  a simple closed curve which separates $\Delta_2,\Delta_2'$ from $\Delta_3,\Delta_3'$.
  Let $\beta$ (resp. $\beta') $ be a simple arc  connecting $\Delta_2$ and $\Delta_2'$ which sits on the pants bounded by $\Delta_2,\Delta_2'$ and $\delta$ (connecting $\Delta_3$ and $\Delta_3'$ which sits on the pants bounded by $\Delta_3,\Delta_3'$ and $\delta$). Hence $\{\delta,\beta,\beta'\}$ consists a coordinate system of curves which gives a Fenchel-Nilsen coordinates of $X$. Let $\delta_n$ be a simple closed curve obtained from $\delta$ by  $n$ times Dehn twist along $\gamma$. Denote by $a_2$ (resp. $a'_2$ ) the geodesic arc perpendicular to both $\Delta_2$  and $\gamma$ (resp. $\Delta'_2$ and $\gamma$).

  Combining  (\ref{eq:dist}) and (\ref{eq:dist3}), we have the following formula
\begin{equation}\label{eq:len-gamman}
\begin{array}{rcl}
\cosh{\frac{l_X([\delta_n])}{2}}&=&\sin(\frac{|\lambda_2|}{2})\sin(\frac{|\lambda_2'|}{2})
\{\cosh {|a_2|}\cosh{|a_2'|}\cosh({t}+n)l_X([\gamma])\\
&&+\sinh{|a_2|}\sinh{|a_2'|}\}
-\cos(\frac{|\lambda_2|}{2})\cos(\frac{|\lambda_2|'}{2}),
\end{array}
\end{equation}
  where $t$ represents the twist of $X$ with respect to $\{\delta,\beta,\beta'\}$.

  So we have
  $$
  \lim_{n\to \infty}\frac{\cosh\frac {l_X([\delta_n])}{2} }{\cosh {nl_X([\gamma])}}=\sin{\frac{|\lambda_2|}{2}}\sin{\frac{|\lambda_2'|}{2}}\cosh |a_2|\cosh |a_2'|,
  $$
  where $|\lambda_2|,|\lambda_2'|$ represent the cone angles at $\Delta_2, \Delta_2'$, respectively.

  Hence
  \begin{equation}\label{eq:limit1}
    \lim_{n\to \infty}e^{ \frac{l_X([\delta_n]) }{2}-{ {nl_X([\gamma])}}}=\sin{\frac{|\lambda_2|}{2}}\sin{\frac{|\lambda_2'|}{2}}\cosh |a_2|\cosh |a_2'|.
  \end{equation}
  The left side and the right side of  (\ref{eq:limit1}) can be viewed as two functions over $ \mathcal{T}_{g,n}(\Lambda)$. Denote these two functions by $F(X),G(X)$ respectively. It follows from (\ref{eq:dist2}) that
  $$ \sinh |a_2|=\frac{\cos \frac{|\lambda_2|}{2}+\cos\frac{|\lambda_2'|}{2}\cosh \frac{l_X(\gamma)}{2} }{\sin \frac{|\lambda_2|}{2}\sinh \frac{l_X(\gamma)}{2}},$$
   which means that $|a_2|$ is a decrease  function of $l_X(\gamma)$. Similarly, $|a_2'|$ is a decrease  function of $l_X(\gamma)$. Therefore $G(X)$ is a decrease  function of $l_X(\gamma)$.
   If there were $X,X'\in  \mathcal{T}_{g,n}(\Lambda)$ and $k>1$ such that $l_{X'}([\alpha])=kl_X([\alpha])$ holds for any $ [\alpha]\in \Sim$, then
  $F(X')>F(X)$. On the other hand, $G(X)$ is a decrease  function of $l_X(\gamma)$, then $G(X')<G(X)$ (note that $|\lambda_2'|$ and $|\lambda_2|$ are fixed). But this is impossible since $F(X)=G(X)$ for any $X\in \mathcal{T}_{g,n}(\Lambda)$.
  \vskip 10pt
  \textbf{Case B}. The surface $S$ has only one generalized boundary component. In this case, if $S$ has genus at least two, we can choose two non-peripheral simple closed curves $\gamma_1,\gamma_2$ which cut $S$ into two pieces, one of which, say $ S_1$,  has one genus and two geodesic boundary components. Let $X'$ be the restriction of $X$ on $S_1$. $X'$  is a hyperbolic surface. Following the method used in the proof of \cite[Prop.7.12]{FLP}, we obtain the injection of $\Pi\circ\Phi$.  It remains to consider the case that $S$ is a torus with only one generalized boundary component $\Delta_3$. Let  $\gamma$, $\beta$ be two simple closed curves on $S$ such that $i([\alpha],[\beta])=1$ (see Fig.~\ref{fig:turos-cone0}), and let $\beta_n$ be the simple closed curve obtained from $\beta$ by $n$ times Dehn twist along $\gamma$. From ~(\ref{eq:torus-length}), we have
  $$
    \lim_{n\to \infty}e^{ {l_X([\beta_n]) }-{ {nl_X([\gamma])}}}=
    \frac{\cosh^2 {\frac{l_X([\gamma])}{2}}+\cos\frac{|\lambda_3|}{2}}{\sinh^2 {\frac{l_X([\gamma])}{2}}},
 $$
  which also implies  that there is no $X'\in \mathcal{T}_{g,n}(\Lambda)$ and $k>0$ such that $l_{X'}([\alpha])=kl_X([\alpha])$ for any $ [\alpha]\in \Sim$ (note that $|\lambda_3|$ is fixed).

  \vskip 10pt

 (b) Suppose that $\mathcal{T}_{g,n}(\Lambda)\ni X_n \longrightarrow \xi\in\partial \overline{\mathcal{T}_{g,n}(\Lambda)}^{Th}$, then for any given $[\alpha]\in\Sim$ there exists $ t_n>0$ such that $t_nl_{X_n}([\alpha])\to \xi([\alpha]) $  as $n\to \infty$.

 We claim that $t_n\to 0$ as $n\to\infty$. Note that $X_n$ leaves every compact subset of $\mathcal{T}_{g,n}(\Lambda)$, it follows from the properness of $\Psi_{\Lambda}$ that there is at least one $[\alpha]\in\Sim$ such that $l_{X_n}([\alpha])\to \infty$ as $n\to \infty$, which proves the claim since $t_nl_{X_n}([\alpha])\to \xi([\alpha]) $ for a finite number $\xi([\alpha]) $.

 Combining  this with the estimates in Theorem B, 
 we have
 \begin{eqnarray*}
  &&\lim_{n\to\infty}t_n l_{F_{\Gamma,\Lambda}(X_n)}([\alpha])\\
  &=& \lim_{n\to\infty}t_n l_{X_n}([\alpha])+\lim_{n\to\infty}t_n [l_{F_{\Gamma,\Lambda}(X_n)}([\alpha])- l_{X_n}([\alpha])]\\
  &=& \lim_{n\to\infty}t_n l_{X_n}([\alpha])\\
  &=&\xi([\alpha]),
 \end{eqnarray*}
  which means that $F_{\Gamma,\Lambda}(X_n)\longrightarrow \xi$ as $n\to\infty$. From Theorem~\ref{thm:thurston-bound-p}, we get    $\partial \overline{\mathcal{T}_{g,n}(0)}^{Th}=\mathcal{PML}(S)$. Therefore $F_{\Gamma,\Lambda}(X_n)\longrightarrow \xi\in\mathcal{PML}(S)$. By interchanging the roles of $X_n$ and $F_{\Gamma,\Lambda}(X_n)$, we get the inverse direction statement, that is, if $F_{\Gamma,\Lambda}X_n\longrightarrow \xi\in\mathcal{PML}(S)$, then
  $X_n\longrightarrow \xi\in\partial \overline{\mathcal{T}_{g,n}(\Lambda)}^{Th}$.

\end{proof}

\end{document}